\documentclass[12pt, a4paper, reqno, oneside]{amsart}
\usepackage{amssymb}  
\usepackage{amsmath} 
\usepackage{amsthm}  
\usepackage{amsaddr}

\usepackage[cp1251]{inputenc}
\usepackage[english]{babel}
\usepackage{cite, verbatim}

\textwidth=160mm \oddsidemargin 3mm
\textheight=230mm  \topmargin -7mm

\newtheorem{theorem}{Theorem}
\newtheorem{conj}{Conjecture}
\newtheorem{cor}{Corollary}
\newtheorem{lemma}{Lemma}
\numberwithin{lemma}{section}

\numberwithin{equation}{section}

\newcommand{\gexp}{\operatorname{exp}}

\newcommand{\Aut}{\operatorname{Aut}}

\begin{document}

\vspace{1cm}

\title[On the solvable radical of a group isospectral to a simple group]{On the nilpotency of the solvable radical of a finite group isospectral to a simple group}

\author{Nanying Yang}
\address{School of Science, Jiangnan University\\ Wuxi, 214122, P. R. China}

\author{Mariya A. Grechkoseeva}
\address{Sobolev Institute of Mathematics\\ Koptyuga 4, Novosibirsk 630090, Russia}

\author{Andrey V. Vasil'ev}
\address{Sobolev Institute of Mathematics\\ Koptyuga 4, Novosibirsk 630090, Russia\\
Novosibirsk State University\\ Pirogova 1, Novosibirsk 630090, Russia}

\email{yangny@jiangnan.edu.cn, grechkoseeva@gmail.com, vasand@math.nsc.ru}

\thanks{The first author was supported by NNSF grant of China (No. 11301227), the second author was supported by RFBR according to the research project no. 18-31-20011, and the third author was supported by Foreign Experts program in Jiangsu Province (No. JSB2018014).}

\begin{abstract}
We refer to the set of the orders of elements of a finite group as its spectrum and say that groups are isospectral if their spectra coincide.
We prove that with the only specific exception the solvable radical of a nonsolvable finite group isospectral to a finite simple group is nilpotent.

{\bf Keywords:}  finite simple group, solvable radical, orders of elements, recognition by spectrum.
 \end{abstract}

\maketitle

\section{Introduction}

In 1957 G.~Higman \cite{57Hig} investigated finite groups in which every element has prime power order (later they were called the $CP$-{\em{groups}}). He gave a description of solvable $CP$-groups by showing that any such group is a $p$-group, or Frobenius, or $2$-Frobenius, and its order has at most two distinct prime divisors. Concerning a nonsolvable group $G$ with the same property, he proved that $G$ has the following structure:
\begin{equation}\label{eq:main}
1\leqslant K< H\leqslant G,
\end{equation}
where the solvable radical $K$ of $G$ is a $p$-group for some prime $p$, $H/K$ is a unique minimal normal subgroup of $G/K$ and is isomorphic to some nonabelian simple group $S$, and $G/H$ is cyclic or generalized quaternion. Later, Suzuki, in his seminal paper \cite{62Suz}, where the new class of finite simple groups (now known as the Suzuki groups) was presented, found all nonabelian simple $CP$-groups. The exhaustive description of $CP$-groups was completed by Brandl in 1981 \cite{81Bra}. It turns out that there are only eight possibilities for nonabelian composition factor $S$ in~\eqref{eq:main}: $L_2(q)$, $q=4,7,8,9,17$, $L_3(4)$, $Sz(q)$, $q=8,32$; the solvable radical $K$ must be a $2$-group (possibly trivial), and there is only one $CP$-group with nontrivial factor $G/H$, namely, $M_{10}$, an automorphic extension of $A_6$.

In the middle of 1970s Gruenberg and Kegel invented the notion of the prime graph of a finite group (nowadays it is also called the Gruenberg--Kegel graph) and noticed that for groups with disconnected prime graph the very similar results to Higman's ones can be proved. Recall that the {\em prime graph} $GK(G)$ of a finite group $G$ is a labelled graph whose vertex set is $\pi(G)$, the set of all prime divisors of $|G|$, and in which two different vertices labelled by $r$ and $s$ are adjacent if and only if $G$ contains an element of order~$rs$. So, according to this definition, $G$ is a $CP$-group if and only if $GK(G)$ is a coclique (all vertices are pairwise nonadjacent). Gruenberg and Kegel proved that a solvable finite group $G$ with disconnected prime graph is Frobenius or $2$-Frobenius and the number of connected components equals~$2$ (cf. Higman's result), while a nonsolvable such group has again a normal series~(\ref{eq:main}), where the solvable radical $K$ is a nilpotent $\pi_1$-group (here $\pi_1$ is the vertex set of the connected component of $GK(G)$ containing $2$), $H/K$ is a unique minimal normal subgroup of $G/K$ and is isomorphic to some nonabelian simple group $S$, and $G/H$ is a $\pi_1$-group. The above results were published for the first time by Gruenberg's student Williams in~\cite{81Wil}. There he also started the classification of finite simple groups with disconnected prime graph that was completed by Kondrat'ev in 1989~\cite{89Kon.t}  (see \cite[Tables 1a--1c]{02Maz.t} for a revised version). Though many nonabelian simple groups, for example, all sporadic ones, have disconnected prime graph, there is a bulk of classical and alternating simple groups which do not enjoy this property. Nevertheless, as we will see below, if an arbitrary finite group has the set of orders of elements as a nonabelian simple group, then its structure can be described as Higman's and Gruenberg--Kegel's theorems do.

For convenience,  we refer to the set $\omega(G)$ of the orders of elements of a finite group $G$ as its {\em spectrum} and say that groups are {\em isospectral} if their spectra coincide. It turns out that there are only three finite nonabelian simple groups $L$, namely, $L_3(3)$, $U_3(3)$, and $S_4(3)$, that have the spectrum as some solvable group \cite[Corollary~1]{10Zav.t} (again the latter must be Frobenius or $2$-Frobenius). It is also known that a nonsolvable group $G$ isospectral to an arbitrary nonabelian simple group $L$ has a normal series~(\ref{eq:main}) with the only nonabelian composition factor $H/K$ (see, e.g, \cite[Lemma~2.2]{15VasGr1}). Here we are interested in the nilpotency of the solvable radical $K$ of $G$.

\begin{theorem} \label{t:main} Let $L$ be a finite nonabelian simple group distinct from the alternating group~$A_{10}$. If $G$ is a finite nonsolvable group with $\omega(G)=\omega(L)$, then the
solvable radical $K$ of $G$ is nilpotent.
\end{theorem}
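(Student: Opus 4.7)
The plan is to argue by contradiction: assume $K$ is non-nilpotent and produce an element of $G$ whose order lies outside $\omega(L)$. The normal series $1\leqslant K<H\leqslant G$ with $H/K\cong S$ nonabelian simple (from \cite[Lemma~2.2]{15VasGr1}) is already in place, and the hypothesis $\omega(G)=\omega(L)$ forces $\pi(S)\subseteq\pi(L)$. As a preliminary step I would tabulate the admissible composition factors $S$, using the existing quasirecognition-by-spectrum literature to reduce the candidate list for $S$ to a short set of simple groups closely related to $L$.

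Next, since $K$ is solvable and non-nilpotent, $F(K)<K$, and the self-centralizing property of the Fitting subgroup in a solvable group yields distinct primes $p,r$, an $H$-invariant elementary abelian $p$-section $V\leqslant O_p(K)$, and an $r$-element $x\in K$ acting nontrivially on $V$. The basic classical tool is the coprime-action lemma: when $V^{\langle x\rangle}\neq 0$, choosing $0\neq v\in V^{\langle x\rangle}$ gives $(vx)^r=rv\neq 0$, so $rp\in\omega(V\rtimes\langle x\rangle)\subseteq\omega(G)$. The no-fixed-vector case should be reduced to the previous one by enlarging $V$ along the $G$-chief series and exploiting the $H$-action. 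Combining this construction with the action of $S=H/K$ on $V$ then produces, for each $n\in\omega(S)$ realised by a suitable $p'$-element of $H$, an element of $G$ of order $pn$.

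The main technical work is to check that for every nonabelian simple $L\neq A_{10}$ the spectrum $\omega(L)$ forbids at least one such product $pn$. This is a case analysis over the classification of finite simple groups: for each admissible $S$ one uses the detailed element-order data of $L$ (available via the Kondrat'ev tables cited above together with the companion results on groups of connected prime graph) and inspects the Gruenberg--Kegel graph $GK(L)$ to locate a pair of primes whose product is excluded from $\omega(L)$ but forced by the action configuration constructed above.

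The principal obstacle, I expect, is the case $S\cong L$: here the target spectrum coincides with $\omega(S)$ already, so the contradiction must be extracted entirely from the extra action of $K$ on $V$. This requires classifying the possible irreducible $\mathbb{F}_p[S]$-modules $V$ together with the possible $p'$-elements of $K$ acting faithfully on them, and then verifying that each such configuration introduces a mixed order not present in $\omega(L)$. The alternating group $A_{10}$ is presumably singled out because $\omega(A_{10})$ is rich enough to absorb one specific such configuration; I would complete the argument by exhibiting an explicit nonsolvable group isospectral to $A_{10}$ with non-nilpotent solvable radical, confirming that the stated exception is sharp.
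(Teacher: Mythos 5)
Your overall strategy (contradiction via $F(K)<K$, then forcing a product of two primes into $\omega(G)$ that $\omega(L)$ forbids) is the same skeleton the paper uses, but the pivotal step is asserted rather than proved, and it is false as stated. You claim that the construction ``produces, for each $n\in\omega(S)$ realised by a suitable $p'$-element of $H$, an element of $G$ of order $pn$.'' This fails whenever the relevant element acts fixed-point-freely on the section $V$, and such Frobenius actions are exactly what make the problem hard; they cannot be dismissed by ``enlarging $V$ along the $G$-chief series,'' since an element can act without fixed points on every chief factor it touches. The paper spends most of its technical effort on precisely this point: it needs Mazurov's lemma on Frobenius quotients acting on normal $w$-subgroups (Lemma~\ref{l:frob}), the classification of Frobenius complements (Lemma~\ref{l:nonc}), Hall--Higman-type statements for parabolic subgroups (Lemmas~\ref{l:hh} and~\ref{l:hh1}), and, in the residual case $S=L_2(v)$, an explicit ordinary character computation (Lemma~\ref{l:l_2v}) to certify that some element of order $v$ or $(v+1)/2$ has a nonzero fixed vector. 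None of this is recoverable from the bare coprime-action observation $(vx)^r=rv$ you invoke.

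You also misplace the principal difficulty. The case where $S$ is a group of Lie type in the same characteristic as $L$ (in particular $S\cong L$) is disposed of in the paper by citation ($K=1$ or, for $L_4^{\tau}(q)$, $K$ is a $p$-group), as are the sporadic, alternating and most exceptional candidates for $S$. The genuinely hard configuration is $S$ a group of Lie type in \emph{cross} characteristic $v\neq p$, where the candidate list for $S$ is not short and where one needs the reduction machinery of Lemma~\ref{l:a}, the containment $R_y(\tau q)\subseteq\pi(K)$, cyclicity constraints on Sylow subgroups, and quantitative exponent comparisons (Lemmas~\ref{l:exp_e}, \ref{l:L6_except}, \ref{l:L4_except}) to eliminate exceptional groups, ultimately narrowing everything to $L=L_6^{\tau}(q)$ or $L_4^{\tau}(q)$. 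Your proposal contains no mechanism for any of this, so the ``case analysis over the classification'' you defer to would not close. (Your final remark about exhibiting a group isospectral to $A_{10}$ concerns only sharpness of the exception and is not part of the proof of the theorem.)
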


Observe that, as shown in \cite[Proposition~2]{98Maz.t} (see \cite{10Sta.t} for details), there is a nonsolvable group having a non-nilpotent solvable radical and isospectral to the alternating group $A_{10}$.

Theorem 1 together with the aforementioned results gives the following

\begin{cor}\label{cor:main} Let $L$ be a finite nonabelian simple group distinct from $L_3(3)$, $U_3(3)$, and $S_4(3)$. If $G$ is a finite group with $\omega(G)=\omega(L)$, then there is a nonabelian simple group $S$ such that $$S\leqslant G/K\leqslant \Aut S,$$ where $K$ being
the largest normal solvable subgroup of $G$ is nilpotent provided $L\neq A_{10}$.
\end{cor}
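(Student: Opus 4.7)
The plan is to argue by contradiction: assume $K$ non-nilpotent and derive, from the combined action of $K/F(K)$ on $F(K)$ and of $S = H/K$ on characteristic sections of $K$, a pattern of element orders in $G$ not realized in $\omega(L)$ for any simple $L \neq A_{10}$. The classification of finite simple groups enters only at the final stage, to supply the list of spectra to check against.

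First, I would extract a good local section. Since $K \neq F(K)$, there is a prime $r \in \pi(K)$ such that some $r'$-element $a \in K$ acts nontrivially on $R = O_r(K)$, hence nontrivially on $V = R/\Phi(R)$, an elementary abelian $r$-group that is an $\mathbb{F}_r G$-module because $R$ and $\Phi(R)$ are characteristic in $K$. The standard trace argument in $V \rtimes \langle a \rangle$ then yields: whenever an element $a \in G$ of order $m$ coprime to $r$ has a nonzero fixed vector on $V$, $G$ contains an element of order $mr$, so $mr \in \omega(L)$. Applying this with $r'$-elements of $K$ (in the cases where fixed points exist) produces an edge $\{r, s\}$ of $GK(L)$ with $s \in \pi(K) \setminus \pi(S)$. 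Applying it with lifts of $p$-elements of $S$, obtained via Schur--Zassenhaus in the preimage of $\langle \bar g \rangle$, produces edges $\{p, r\}$ for many $p \in \pi(S)$ provided $S$ acts nontrivially on $V$. If $S$ centralizes $V$, I would iterate on a further characteristic section of $K$, using $C_G(F(K)) \leq K$ to guarantee some $S$-nontrivial section exists; and the fixed-point-free variant (as exhibited by $S_3$ acting on $C_3$) would be handled by a separate Frobenius-style subanalysis exploiting the rigid structure such actions impose on $K$.

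Second, the contradiction. If $GK(L)$ is disconnected the Gruenberg--Kegel theorem already forces $K$ nilpotent, so I may assume $GK(L)$ is connected; that leaves $L$ alternating, sporadic with connected graph, or of Lie type with connected graph. For each family, known descriptions of $\omega(L)$ (Vasiliev--Vdovin for Lie type, tables for sporadic, cycle-type analysis for $A_n$) enable verification that no vertex $r$ of $GK(L)$ carries the forced pattern of adjacencies derived above, except in the single case $L = A_{10}$, which is an honest exception realized by the construction cited in the introduction. The principal obstacle is the Lie-type case: one must show uniformly that no rank--field combination yields a prime graph dense enough at any candidate $r$ to accommodate a non-nilpotent solvable radical. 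This is controlled by the Vasiliev--Vdovin adjacency criteria, which encode edges of $GK(L)$ in terms of orders of maximal tori, and ultimately reduces to arithmetic on cyclotomic values; managing this uniformly across all classical and exceptional families, while isolating $A_{10}$ as the unique exception, is where the bulk of the technical work will lie.
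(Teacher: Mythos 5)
Your proposal does not follow the paper's route and, more importantly, it is not yet a proof. In the paper this corollary is an immediate consequence of three ingredients: Zavarnitsine's classification of the simple groups isospectral to a solvable group (which is exactly where the exclusion of $L_3(3)$, $U_3(3)$ and $S_4(3)$ comes from, and which you never invoke, so you have no argument that $G$ is nonsolvable), the structure theorem for nonsolvable groups isospectral to simple groups (Lemma~\ref{l:str}, resting on $t(L)\geqslant 3$ and $t(2,L)\geqslant 2$, together with the Gruenberg--Kegel theorem when $GK(L)$ is disconnected), and Theorem~\ref{t:main}. You instead presuppose the normal series with $S=H/K$ simple and $G/K\leqslant\Aut S$ --- that is, the first half of the statement --- and set out to reprove the second half, the nilpotency of $K$, from scratch; that is the content of the entire paper, not of this corollary.

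The sketch of that reproof has concrete gaps. The assertion that the connected Lie-type case ``reduces to arithmetic on cyclotomic values'' controlled by the Vasil'ev--Vdovin adjacency criteria is not tenable: the paper's argument is not a pure prime-graph argument. It needs (a) the prior almost-recognizability results (Lemma~\ref{l:isospectral}) to dispose of most families outright; (b) the identification of the isomorphism type of the composition factor $S$ --- ruling out alternating, sporadic and equal-characteristic possibilities via several cited theorems --- before any adjacency analysis can begin; and (c) for the residual cases $L_4^\tau(q)$ and $L_6^\tau(q)$, exponent comparisons (Lemmas~\ref{l:L6_except} and~\ref{l:L4_except}), a Schur-multiplier argument (Lemma~\ref{l:schur}), Hall--Higman theory (Lemmas~\ref{l:hh} and~\ref{l:hh1}), and an explicit character-theoretic computation showing that suitable elements of $L_2(v)$ have fixed vectors on every module in coprime characteristic (Lemma~\ref{l:l_2v}). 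None of this is encoded in the edges of $GK(L)$, and your ``forced pattern of adjacencies'' is never written down, so there is nothing concrete to check against the adjacency tables. Finally, the fixed-point-free situation you defer to ``a separate Frobenius-style subanalysis'' is not a side case but the crux of the whole argument: it is exactly what Mazurov's lemma (Lemma~\ref{l:frob}) and its carefully prepared applications handle, and without that machinery the contradiction does not close even for a single family.
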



As in the case of $CP$-groups, the thorough analysis of groups isospectral to simple ones allows to say more. Though, there are quite a few examples of finite groups with nontrivial solvable radical which are
isospectral to nonabelian simple groups (see \cite[Table~1]{18Lyt}), in general the situation is much better. In order to describe it, we refer to a nonabelian simple group $L$ as {\em recognizable
(by spectrum)} if every finite group $G$ isospectral to $L$ is isomorphic to $L$, and as {\em almost recognizable (by spectrum)} if every such a group $G$ is an almost simple group with socle isomorphic
to $L$. It is known that all sporadic and alternating groups, except for $J_2$, $A_6$, and $A_{10}$, are recognizable (see \cite{98MazShi, 13Gor.t}), and all exceptional groups excluding
$^3D_4(2)$ are almost recognizable (see \cite{14VasSt.t,16Zve.t}). In 2007 Mazurov conjectured that there is a positive integer $n_0$ such that all simple classical groups of dimension at
least $n_0$ are almost recognizable as well. Mazurov's conjecture was proved in \cite[Theorem 1.1]{15VasGr1} with $n_0=62$. Later it was shown \cite[Theorem 1.2]{17Sta} that we can take
$n_0=38$. It is clear that this bound is far from being final, and we conjectured that the following holds \cite[Conjecture 1]{15VasGr1}.

\begin{conj} \label{c:1} Suppose that $L$ is one of the following nonabelian simple groups:
\begin{enumerate}
\item $L_n(q)$, where $n\geqslant5$;
\item $U_n(q)$, where $n\geqslant5$ and $(n,q)\neq(5,2)$;
\item $S_{2n}(q)$, where $n\geqslant3$, $n\neq4$ and $(n,q)\neq(3,2)$;
\item $O_{2n+1}(q)$, where $q$ is odd, $n\geqslant3$, $n\neq4$ and $(n,q)\neq(3,3)$;
\item $O_{2n}^\varepsilon(q)$, where $n\geqslant4$ and $(n,q,\varepsilon)\neq (4,2,+),(4,3,+)$.
\end{enumerate}
Then every finite group isospectral to $L$ is an almost simple group with socle isomorphic to $L$.
\end{conj}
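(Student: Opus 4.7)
The plan is to combine the structural results already cited with a careful study of possible composition factors and of modular actions of a simple group on an elementary abelian normal subgroup. Let $G$ be a finite group with $\omega(G)=\omega(L)$ for one of the groups $L$ on the list. By the Gruenberg--Kegel style reduction already quoted (Lemma 2.2 of \cite{15VasGr1}), $G$ has a normal series
\begin{equation*}
1\leqslant K\triangleleft H\triangleleft G
\end{equation*}
with $K$ the solvable radical, $H/K\cong S$ a nonabelian simple group, and $G/H$ embedded in $\Out(S)$. Since $L\neq A_{10}$, Theorem~\ref{t:main} applies and $K$ is nilpotent. The task is therefore split into two goals: identify the composition factor $S$ as $L$, and rule out a nontrivial $K$.

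For the identification of $S$, the approach is to use the containment $\omega(H/K)\subseteq\omega(G)=\omega(L)$ together with known adjacency criteria in the Gruenberg--Kegel graph of $L$ to force $S\cong L$. For classical $L$ of sufficiently large dimension, each candidate simple group $S$ with $\omega(S)\subseteq\omega(L)$ can be ruled out except $L$ itself by exhibiting Zsigmondy-type large primes, pairs of large coprime orders, and $2$-powers in $\omega(L)\setminus\omega(S)$ (or vice versa). The cited results of Vasiliev--Grechkoseeva and Staroletov do exactly this for $n\geqslant 38$; pushing this down to the thresholds of the conjecture requires sharper enumeration of element orders in the small-dimensional classical groups and their covers.

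For the second step, assume $K\neq 1$. Pick a prime $p$ with a nontrivial Sylow subgroup $K_p$ of $K$; since $K$ is nilpotent, $K_p$ is characteristic in $K$ and hence normal in $G$, and its Frattini quotient $V=K_p/\Phi(K_p)$ is a nontrivial $\mathbb{F}_p[S]$-module. The idea is to exhibit an element $s\in S$ of order $m$ coprime to $p$ such that $s$ cannot act fixed-point-freely on $V$; then $G$ contains an element of order $pm$, and one chooses $s$ so that $pm\notin\omega(L)$, a contradiction. Concretely, one takes $s$ to be a generator of a suitable maximal torus of $L$ of order divisible by a primitive prime divisor $r$ of a high-degree cyclotomic value, so that $m$ is so large relative to the minimal faithful cross-characteristic degree of $S$ that Hall--Higman-type bounds (together with Brauer character estimates) force the $1$-eigenspace of $s$ on $V$ to be nontrivial. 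Classification of spectra of the listed $L$ guarantees that $pm$ lies outside $\omega(L)$ for nearly all pairs $(p,s)$.

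The main obstacle, as usual in this circle of ideas, is the case where $p$ coincides with the defining characteristic of $L$. Then $V$ is a module in defining characteristic, minimal faithful degrees drop to the natural Lie-theoretic values, and the existence of an eigenvector of $s$ with eigenvalue $1$ on $V$ is no longer automatic; one must analyse the Jordan block structure of semisimple elements on Weyl modules and use weight-theoretic information about unipotent--semisimple products in $S\ltimes V$. A secondary obstacle is that $S$ need not be isomorphic to $L$ outright but could initially be a classical group close to $L$ (different dimension, different sign, different field); the identification of $S$ with $L$ therefore has to be done in parallel with the $K=1$ argument, since some of the most informative element orders of $G$ arise precisely from $K$--$S$ interactions. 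Handling the small cases $(n,q)$ near the boundary of each family listed in parts (i)--(v) of the conjecture is where I expect the bulk of the technical work to lie.
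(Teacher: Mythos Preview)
The statement you are attempting to prove is Conjecture~\ref{c:1}, and the paper does \emph{not} prove it: it is explicitly presented as an open conjecture, with the remark that Theorem~\ref{t:main} ``provides a helpful tool for the verification of the conjecture.'' There is therefore no proof in the paper against which to compare your proposal.

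What you have written is not a proof but a strategic outline, and you essentially acknowledge this yourself: the identification step relies on ``sharper enumeration of element orders'' that you do not carry out, and the elimination of $K$ in defining characteristic is flagged as ``the main obstacle'' without resolution. The outline is reasonable as a description of the general programme behind results like \cite{15VasGr1} and \cite{17Sta}, but the genuine content of the conjecture lies precisely in the parts you defer --- the small-rank cases below the current bound $n_0=38$, the defining-characteristic module analysis, and the simultaneous pinning down of $S$ and $K$. None of these is addressed, so this is a sketch of a known framework rather than a proof, and the paper makes no claim that this framework suffices at the stated thresholds.
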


In order to prove the almost recognizability of a simple group $L$ one should prove the triviality of the solvable radical $K$ of a group isospectral to $L$. It does not sound surprising that establishing the nilpotency of $K$ is a necessary step toward that task (see, e.g., \cite{19GrVasZv}). Thus our main result, besides everything, provides a helpful tool for the verification of the conjecture.

\section{Preliminaries}

As usual, $[m_1,m_2,\dots,m_k]$ and $(m_1,m_2,\dots,m_k)$ denote respectively the least common multiple and greatest common divisor of integers $m_1,m_2,\dots,m_s$.
For a positive integer~$m$, we write $\pi(m)$ for the set of prime divisors of $m$. Given a prime $r$, we write $(m)_{r}$ for the $r$-part of $m$, that is, the highest power of $r$ dividing $m$,
and $(m)_{r'}$ for the $r'$-part of $m$, that is, the ratio $m/(m)_{r}$. If $\varepsilon\in\{+,-\}$, then in arithmetic expressions, we abbreviate $\varepsilon 1$ to $\varepsilon$.
The next lemma  is well known (see, for example, \cite[Ch. IX, Lemma 8.1]{82HupBl2}).

\begin{lemma}\label{l:r-part}
Let $a$ and $m$ be positive integers and let $a>1$. Suppose that $r$ is a prime and  $a\equiv \varepsilon\pmod r$, where  $\varepsilon\in\{+1,-1\}$.
\begin{enumerate}
\item If $r$ is odd, then $(a^m-\varepsilon^m)_{r}=(m)_r(a-\varepsilon)_{r}$.
\item If $a\equiv \varepsilon\pmod 4$, then $(a^m-\varepsilon^m)_{2}=(m)_{2}(a-\varepsilon)_{2}$.
\end{enumerate}
\end{lemma}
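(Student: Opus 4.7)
The plan is to reduce to the case $\varepsilon=+1$ and then expand by the binomial theorem, tracking the $r$-adic valuation of each term.

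First I would deal with the sign. If $\varepsilon=-1$, set $b=-a$. In case (i) the prime $r$ is odd, so $b\equiv 1\pmod r$; in case (ii) the hypothesis $a\equiv -1\pmod 4$ gives $b\equiv 1\pmod 4$. Splitting into the parity of $m$ gives $b^m-1=\pm(a^m-\varepsilon^m)$ and $b-1=-(a-\varepsilon)$, so the identity for $b$ with $\varepsilon=+1$ is equivalent to the identity for $a$. Hence it suffices to prove: if $a\equiv 1\pmod r$ (with the extra assumption $a\equiv 1\pmod 4$ when $r=2$), then $(a^m-1)_r=(m)_r(a-1)_r$.

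Next, write $a=1+r^k c$ where $k=v_r(a-1)$ and $\gcd(c,r)=1$, so $k\ge 1$ in case (i) and $k\ge 2$ in case (ii). The binomial theorem gives
\begin{equation*}
a^m-1=\sum_{i=1}^{m}\binom{m}{i}(r^kc)^i = mr^kc+\sum_{i=2}^{m}\binom{m}{i}r^{ik}c^i.
\end{equation*}
The leading term $mr^kc$ has $r$-adic valuation exactly $v_r(m)+k$, so the lemma will follow once every term with $i\ge 2$ has strictly larger valuation. Using $\binom{m}{i}=\frac{m}{i}\binom{m-1}{i-1}$, we have $v_r\!\left(\binom{m}{i}\right)\ge v_r(m)-v_r(i)$, and the bound $r^{v_r(i)}\le i$ gives $v_r(i)\le \log_r i$. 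Thus the desired inequality
\begin{equation*}
v_r(m)-v_r(i)+ik>v_r(m)+k \quad\text{i.e.}\quad (i-1)k>v_r(i)
\end{equation*}
reduces to showing $(i-1)k>\log_r i$ for $i\ge 2$.

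The main (and essentially the only) obstacle is this last inequality when $r$ and $i$ are both as small as possible. For $r$ odd we have $k\ge 1$ and $(i-1)k\ge i-1\ge\log_2 i>\log_r i$ for $i\ge 2$, handling case (i). For $r=2$ the delicate case is $i=2$: there $v_r(i)=1$ and we need $(i-1)k=k>1$, which is exactly the reason the hypothesis strengthens to $a\equiv\varepsilon\pmod 4$, ensuring $k\ge 2$; for larger $i$, $(i-1)k\ge 2(i-1)>\log_2 i$, so case (ii) follows as well. Combining the dominant contribution with the strict estimates on the tail yields $(a^m-1)_r=(m)_r(a-1)_r$, completing the proof.
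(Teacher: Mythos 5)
Your proof is correct. Note, however, that the paper does not actually prove Lemma \ref{l:r-part}: it is simply quoted as well known, with a pointer to Huppert--Blackburn \cite[Ch.~IX, Lemma~8.1]{82HupBl2}. What you have written is a complete, self-contained derivation of the standard ``lifting the exponent'' identity: the reduction to $\varepsilon=+1$ via $b=-a$ is handled correctly (the parity split gives $|b^m-1|=|a^m-\varepsilon^m|$ and $(b-1)_r=(a-\varepsilon)_r$, and the binomial argument is indifferent to the sign of $c$ in $b=1+r^kc$), and the valuation bookkeeping is sound: the bound $v_r\bigl(\binom{m}{i}\bigr)\geqslant v_r(m)-v_r(i)$ from $i\binom{m}{i}=m\binom{m-1}{i-1}$ together with $v_r(i)\leqslant\log_r i$ reduces everything to $(i-1)k>v_r(i)$, and you correctly isolate $r=2$, $i=2$ as the critical case where the hypothesis $a\equiv\varepsilon\pmod 4$ (i.e.\ $k\geqslant 2$) is genuinely needed. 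The only value of comparing approaches here is that your argument makes visible exactly why part (ii) requires the stronger congruence modulo $4$, which the bare citation in the paper does not.
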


Let $a$ be an integer. If $r$ is an odd prime and $(a,r)=1$, then $e(r,a)$ denotes the multiplicative order of
$a$ modulo $r$. Define $e(2,a)$ to be 1 if  $4$ divides $a-1$ and to be $2$ if $4$ divides $a+1$.
A primitive prime divisor of $a^m-1$, where $|a|>1$ and $m\geqslant 1$, is a prime $r$ such that $e(r,a)=m$.
The set of primitive prime divisors of $a^m-1$ is denoted by $R_m(a)$,
and we write $r_m(a)$ for an element of $R_m(a)$ (provided that it is not empty). The following well-known lemma was proved in~\cite{86Bang} and independently in~\cite{Zs}.

\begin{lemma}[Bang--Zsigmondy]\label{l:bz}
Let $a$ and $m$ be integers, $|a|>1$ and $m\geqslant 1$. Then the set $R_m(a)$ is not empty, except when $$(a,m)\in\{(2,1),(2,6),(-2,2),(-2,3),(3,1),(-3,2)\}.$$
\end{lemma}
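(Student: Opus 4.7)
My approach is the classical cyclotomic polynomial argument. First I would reduce to the case $a \geq 2$: if $a = -b$ with $b \geq 2$, the identity $a^m - 1 = \pm(b^m \mp 1)$ (depending on the parity of $m$) allows one to relate $R_m(a)$ to $R_m(b)$ or $R_{2m}(b)$ by a direct comparison of multiplicative orders, and the three exceptions with negative $a$ correspond under this translation to exceptions already visible in the positive case.

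For $a \geq 2$, I would work with the factorization
\begin{equation*}
a^n - 1 = \prod_{d \mid n} \Phi_d(a),
\end{equation*}
where $\Phi_d$ is the $d$-th cyclotomic polynomial, and prove the key lemma: if a prime $r$ divides $\Phi_n(a)$ but does not divide $n$, then $e(r, a) = n$ and hence $r \in R_n(a)$. This reduces to showing that if $e(r, a) = d$ were a proper divisor of $n$, then $r$ would be a repeated root of $x^n - 1$ in $\mathbb{F}_r[x]$, and inspecting the derivative $n x^{n-1}$ forces $r \mid n$. A sharper $r$-adic analysis shows that a prime $r$ dividing both $\Phi_n(a)$ and $n$ is uniquely determined by $n$ and appears in $\Phi_n(a)$ only to the first power. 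Consequently, $R_n(a) = \varnothing$ implies $\Phi_n(a) \leq n$.

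To reach a contradiction I would combine this with the elementary lower bound $\Phi_n(a) \geq (a - 1)^{\varphi(n)}$, coming from $|a - \zeta| \geq a - 1$ for every root of unity $\zeta$ of modulus $1$. The inequality $(a - 1)^{\varphi(n)} \leq n$ holds only for a short list of pairs $(a, n)$, and a direct check of these finitely many pairs leaves exactly the six exceptional ones in the statement. The main technical obstacle is the sharp $r$-adic count for primes $r$ dividing both $\Phi_n(a)$ and $n$, and in particular the prime $r = 2$, where the usual lifting-the-exponent identity degenerates and must be treated separately; this is precisely the phenomenon that forces the case split in the definition of $e(2, a)$ preceding the lemma.
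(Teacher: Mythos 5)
The paper offers no proof of this lemma --- it simply cites Bang and Zsigmondy --- so the only question is whether your classical cyclotomic argument is complete. Most of it is: the reduction to $a\geqslant 2$, the key lemma that a prime $r$ dividing $\Phi_n(a)$ but not $n$ satisfies $e(r,a)=n$, and the sharper claim that a non-primitive prime divisor of $\Phi_n(a)$ is forced to be the largest prime factor of $n$ and (for $n\geqslant 3$) divides $\Phi_n(a)$ exactly once, so that $R_n(a)=\varnothing$ implies $\Phi_n(a)\leqslant n$. All of that is the standard route and is sound, modulo the fiddly bookkeeping at $r=2$ and in the passage between $R_m(-a)$ and $R_m(a)$, $R_{2m}(a)$, which you correctly flag.

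The genuine gap is in the final step. The bound $\Phi_n(a)\geqslant (a-1)^{\varphi(n)}$ is vacuous exactly when $a=2$: it reads $\Phi_n(2)\geqslant 1$, so the inequality $(a-1)^{\varphi(n)}\leqslant n$ holds for \emph{every} $n$ when $a=2$, not for ``a short list of pairs''. Since $a=2$ is the case containing the essential exception $(2,6)$, your argument does not terminate there. To close it you need a strictly stronger lower bound for $\Phi_n(2)$; for instance, from
$$\Phi_n(a)=\prod_{d\mid n}(a^d-1)^{\mu(n/d)}=a^{\varphi(n)}\prod_{d\mid n}\bigl(1-a^{-d}\bigr)^{\mu(n/d)}\geqslant a^{\varphi(n)}\prod_{d\geqslant 1}\bigl(1-a^{-d}\bigr)$$
one gets $\Phi_n(2)>2^{\varphi(n)-2}$, and then $\Phi_n(2)\leqslant n$ together with $\varphi(n)\geqslant\sqrt{n/2}$ does confine $n$ to a finite, checkable range. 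Without some such refinement the case $a=2$ --- and hence the theorem --- is not proved.
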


\begin{lemma}\label{l:pras}
Let $k,m,a$ be positive integers numbers, $a>1$. Then $R_{mk}(a)\subseteq R_m(a^k)$. If, in addition, $(m,k)=1$, then $R_{m}(a)\subseteq R_m(a^k)$.
\end{lemma}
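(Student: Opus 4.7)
The plan is to fix a prime $r$ in the hypothesised set and verify directly, from the definition of $e(r,\cdot)$, that it lies in $R_m(a^k)$. Note that $r\in R_{mk}(a)$ (respectively $r\in R_m(a)$) forces $r\mid a^{mk}-1$ (resp. $r\mid a^m-1$), so $(a^k,r)=1$ and $e(r,a^k)$ is well defined. Because $e(r,\cdot)$ is defined separately for $r$ odd and $r=2$, I would split into these two cases.

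For an odd $r$ coprime to $a$, the value $e(r,a)$ is the genuine multiplicative order of $a$ in $(\mathbb Z/r\mathbb Z)^\times$, and the standard identity
\[
e(r,a^k)=\frac{e(r,a)}{(e(r,a),k)}
\]
gives both inclusions at once. Indeed, substituting $e(r,a)=mk$ yields $e(r,a^k)=mk/k=m$, which handles the first inclusion; substituting $e(r,a)=m$ and using $(m,k)=1$ yields $e(r,a^k)=m/1=m$, which handles the second.

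For $r=2$ the value $e(2,\cdot)$ is defined in an ad hoc way and belongs to $\{1,2\}$, so the equation $e(2,a)=mk$ (resp. $e(2,a)=m$) restricts $(m,k)$ to a short list of possibilities, each of which I would verify by a direct congruence check modulo $4$ or $8$. For example, in the first inclusion with $mk=2$ and $(m,k)=(1,2)$, the hypothesis $4\mid a+1$ gives $a\equiv 3\pmod 4$, hence $a^2\equiv 1\pmod 8$, so $e(2,a^2)=1=m$; the cases $(m,k)=(2,1)$ and $(m,k)=(1,1)$ are immediate. For the second inclusion the coprimality condition $(m,k)=1$ forces $k$ to be odd whenever $m=2$, so $a^k\equiv a\equiv 3\pmod 4$, and again $e(2,a^k)=2=m$; the $m=1$ subcase is trivial.

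No genuine difficulty is expected: the lemma is essentially a repackaging of the behaviour of multiplicative orders under the map $a\mapsto a^k$, with a brief case analysis at $r=2$ to accommodate the special definition of $e(2,\cdot)$.
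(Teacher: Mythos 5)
Your proof is correct and takes the route the paper intends: the paper simply asserts that the lemma ``easily follows from the definition of $R_m(a)$'' and cites \cite[Lemma~6]{12GrLyt.t}, and your argument via the identity $e(r,a^k)=e(r,a)/(e(r,a),k)$ for odd $r$, plus the short congruence check modulo $4$ for $r=2$, is exactly the routine verification being delegated there. No gaps; the case analysis at $r=2$ is handled correctly, including the observation that $(m,k)=1$ forces $k$ odd when $m=2$.
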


\begin{proof} It easily follows from the definition of $R_m(a)$ (see, e.g., \cite[Lemma~6]{12GrLyt.t}).
\end{proof}

The largest primitive divisor of $a^m-1$, where $|a|>1$, $m\geqslant 1$, is the number $k_m(a)=\prod_{r\in R_m(a)}|a^m-1|_{r}$ if $m\neq 2$, and
the number $k_2(a)=\prod_{r\in R_2(a)}|a+1|_{r}$ if $m=2$. The largest primitive divisors can be written in terms of cyclotomic polynomials~$\Phi_m(x)$.

\begin{lemma}\label{l:k_n}
Let $a$ and $m$ be integers, $|a|>1$ and $m\geqslant 3$. Suppose that $r$ is the largest prime divisor of $m$ and $l=(m)_{r'}$. Then
$$k_m(a)=\frac{|\Phi_m(a)|}{(r,\Phi_l(a))}.$$
Furthermore, $(r,\Phi_l(a))=1$ whenever $l$ does not divide $r-1$.
\end{lemma}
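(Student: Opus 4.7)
The plan is to combine the cyclotomic factorization $a^m - 1 = \prod_{d\mid m}\Phi_d(a)$ with the classical Birkhoff--Vandiver result on prime divisors of $\Phi_m(a)$. First I would observe that a prime $p$ lies in $R_m(a)$ precisely when $p\mid\Phi_m(a)$ and $p\nmid\Phi_d(a)$ for every proper divisor $d$ of $m$, and that for such primes the $p$-part of $a^m - 1$ coincides with the $p$-part of $\Phi_m(a)$. Consequently $k_m(a)$ equals $|\Phi_m(a)|$ divided by the \emph{non-primitive part} of $\Phi_m(a)$, namely the contribution of primes $p\mid \Phi_m(a)$ whose multiplicative order of $a$ modulo $p$ is smaller than $m$.

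The main ingredient is the Birkhoff--Vandiver lemma: if a prime $p$ divides $\Phi_n(a)$ and the multiplicative order $e$ of $a$ modulo $p$ is strictly less than $n$, then $n=p^j e$ for some $j\ge 1$ and $v_p(\Phi_n(a))=1$; conversely, whenever $n=p^j e(p,a)$ with $j\ge 1$ the prime $p$ does divide $\Phi_n(a)$. Since $e\mid p-1$, every prime divisor of $e$ is smaller than $p$, so such a $p$ must be the largest prime divisor of $n$. Applied to $\Phi_m(a)$, this tells me that the non-primitive part is either $1$ or the single prime $r$ from the statement; because $(r,\Phi_l(a))\in\{1,r\}$, it suffices to prove the equivalence that $r$ appears non-primitively in $\Phi_m(a)$ if and only if $r\mid \Phi_l(a)$.

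For the ``only if'' direction, if $r\mid\Phi_m(a)$ with $e(r,a)<m$, the lemma gives $m=r^j e(r,a)$ with $j\ge 1$; since $r\nmid e(r,a)$, I read off $e(r,a)=(m)_{r'}=l$, so $r$ divides $a^l - 1$ with order exactly $l$, whence $r\mid\Phi_l(a)$. Conversely, if $r\mid\Phi_l(a)$, applying the lemma to $\Phi_l(a)$ yields $l=r^j e(r,a)$ for some $j\ge 0$; the definition of $l$ forces $r\nmid l$ and therefore $j=0$, so $e(r,a)=l<m$, and since $m=l\cdot r^{v_r(m)}$ with $v_r(m)\ge 1$, the converse direction of the lemma shows that $r$ also divides $\Phi_m(a)$, non-primitively. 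This establishes the formula $k_m(a)=|\Phi_m(a)|/(r,\Phi_l(a))$.

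The final assertion is immediate: if $r\mid\Phi_l(a)$ then $e(r,a)=l$, and since the multiplicative order of $a$ modulo $r$ divides $r-1$, we conclude $l\mid r-1$; contrapositively, $l\nmid r-1$ forces $(r,\Phi_l(a))=1$. The main obstacle I anticipate is invoking the Birkhoff--Vandiver classification cleanly; once it is granted, the rest is pure bookkeeping with orders. A minor wrinkle is the paper's nonstandard convention for $e(2,a)$, but it only intrudes when $r=2$ (so $m$ is a power of two and $l=1$), and there the identity reduces to the direct evaluation of $(2,a-1)$, consistent with Lemma~\ref{l:r-part}(ii).
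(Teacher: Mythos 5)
Your proof is correct and rests on the same underlying fact as the paper's: the paper simply cites Roitman \cite[Proposition 2]{97Roi}, which is precisely the Birkhoff--Vandiver classification of prime divisors of $\Phi_m(a)$ that you invoke (the only non-primitive prime divisor of $\Phi_m(a)$ is the largest prime $r$ of $m$, occurring to the first power when $m\geqslant 3$, and it occurs if and only if $e(r,a)=l$). Your write-up, including the handling of the edge cases $r=2$ and $l\in\{1,2\}$, is a sound self-contained expansion of that citation.
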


\begin{proof}
This follows from \cite[Proposition 2]{97Roi} (see, for example, \cite[Lemma 2.2]{15VasGr.t}).
\end{proof}

Recall that $\omega(G)$ is the set of the orders of elements of $G$. We write $\mu(G)$ for the set of maximal under divisibility elements of $\omega(G)$. The least common multiple of the elements of $\omega(G)$ is equal to
the exponent of $G$ and denoted by $\gexp(G)$. Given a prime $r$, $\omega_{r}(G)$ and $\gexp_r(G)$ are respectively the spectrum and the exponent of a Sylow $r$-subgroup of $G$. Similarly, $\omega_{r'}(G)$ and $\gexp_{r'}(G)$ are respectively the set of the orders of elements of $G$ that are coprime to $r$ and the least common multiple of these orders.

A coclique of a graph is a set of pairwise nonadjacent vertices. Define $t(G)$ to be the largest size of a coclique of the prime graph $GK(G)$ of a finite group $G$. Similarly, given $r\in\pi(G)$, we write
$t(r,G)$ for the largest size of a coclique of $G$ containing $r$. It was proved in \cite{05Vas.t} that a finite group $G$ with $t(G)\geqslant 3$ and $t(2,G)\geqslant 2$ has exactly one nonabelian composition factor.
Below we provides the refined version of this assertion from \cite{09VasGor.t}.

\begin{lemma}[{\cite{05Vas.t, 09VasGor.t}}]\label{l:str}
Let $L$ be a finite nonabelian simple group such that $t(L)\geqslant3$ and $t(2,L)\geqslant2$, and suppose that a finite group $G$
satisfies $\omega(G)=\omega(L)$. Then the following holds.
\begin{enumerate}
\item There is a nonabelian simple group $S$ such that $$S\leqslant \overline G=G/K\leqslant \Aut S,$$ with $K$ being
the largest normal solvable subgroup of $G$.

\item If $\rho$ is a coclique of $GK(G)$ of size at least  $3$, then at most one prime of $\rho$
divides $|K|\cdot|\overline{G}/S|$. In particular, $t(S)\geqslant t(G)-1$.

\item If $r\in\pi(G)$ is not adjacent to $2$ in $GK(G)$, then $r$ does not divide
$|K|\cdot|\overline{G}/S|$. In particular,  $t(2,S)\geqslant t(2,G)$.
\end{enumerate}
\end{lemma}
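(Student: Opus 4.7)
The plan is to exploit that $GK(G)=GK(L)$ gives $t(G)\ge 3$ and $t(2,G)\ge 2$, together with two ingredients: commuting elements of coprime orders $p,q$ produce an element of order $pq$; and the Gruenberg--Kegel theorem that a solvable group with disconnected prime graph is Frobenius or $2$-Frobenius, with exactly two prime-graph components.

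For (i), let $K$ be the solvable radical and $\bar G=G/K$; then the socle of $\bar G$ is a direct product $S_1\times\cdots\times S_k$ of nonabelian simple groups with $S_1\times\cdots\times S_k\trianglelefteq\bar G\le\Aut(S_1\times\cdots\times S_k)$. I would show $k=1$ by contradiction. If $k\ge 2$, then commuting elements drawn from $S_1$ and $S_2$ force every prime of $\pi(S_1)$ to be adjacent in $GK(G)$ to every prime of $\pi(S_2)$. Since $|\pi(S_i)|\ge 3$ for each nonabelian simple $S_i$, the complete bipartite subgraph on $\pi(S_1)\cup\pi(S_2)$ contains no 3-coclique; a short case check, using that any prime outside $\pi(S_1\times\cdots\times S_k)$ originates in $K$ or in $\Out(\mathrm{Soc}(\bar G))$ and is adjacent to most primes of $\pi(S_1)\cup\pi(S_2)$, then contradicts $t(G)\ge 3$.

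For (ii), assume a 3-coclique $\rho$ of $GK(G)$ contains two primes $r,s$ dividing $|K|\cdot|\bar G/S|$, and pick a third prime $t\in\rho$. It suffices to produce an element of order $rs$ in $G$, contradicting $r\nsim s$. I would split into the three cases $\{r,s\}\subseteq\pi(\bar G/S)$; $r\in\pi(K)$, $s\in\pi(\bar G/S)$ (and symmetric); and $\{r,s\}\subseteq\pi(K)$. When both primes lie in $\pi(\bar G/S)\subseteq\pi(\Out S)$, the very restricted structure of $\Out S$ for a simple $S$ provides commuting lifts in $G$. In the ``both in $\pi(K)$'' case, which is the main obstacle, I would take the Hall $\{r,s\}$-subgroup $H$ of the solvable group $K$; its prime graph on the vertices $r,s$ is disconnected, so by Gruenberg--Kegel $H$ is Frobenius with kernel the Sylow $r$- (or $s$-) subgroup and complement the other Sylow subgroup. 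The third coclique prime $t$, nonadjacent to both $r$ and $s$, then forces an action of a $t$-element on this rigid Frobenius configuration that is incompatible with $r\nsim t$ or $s\nsim t$, producing an element of order $rt$ or $st$: contradiction. The mixed case is treated analogously via a Frattini/coprime-action argument on $K$ together with a lift from $\bar G/S$. The inequality $t(S)\ge t(G)-1$ is then immediate by projecting a $t(G)$-coclique of $GK(G)$ onto $\pi(S)$.

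For (iii), suppose $r\nsim 2$ in $GK(G)$ and $r\mid|K|\cdot|\bar G/S|$. Since $S$ is nonabelian simple, $2\in\pi(S)$, so a Sylow $2$-subgroup of $G$ projects nontrivially into $S$. A coprime-action/Frattini argument applied to a Sylow $r$-subgroup $R$ of the relevant layer (either $K$ or the preimage of a Sylow $r$-subgroup of $\bar G/S$) yields an involution in $G$ centralizing a nontrivial element of $R$, hence an element of order $2r$ in $G$ and the desired contradiction. The assertion $t(2,S)\ge t(2,G)$ follows. The principal difficulty throughout is the ``both in $\pi(K)$'' case of (ii), which is precisely where the full strength $t(G)\ge 3$, rather than merely $t(G)\ge 2$, is needed.
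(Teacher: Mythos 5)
First, note that the paper does not prove this lemma at all: it is quoted verbatim from \cite{05Vas.t} and \cite{09VasGor.t} (it is the main theorem of the former, refined in the latter), so there is no in-paper argument to compare yours against; your proposal has to stand as a proof of that external result, and as written it does not. The most concrete failure is in (iii): you claim that a Frattini/coprime-action argument ``yields an involution in $G$ centralizing a nontrivial element of $R$.'' That is false in general --- an involution can act fixed-point-freely on a group of odd order (inversion on an abelian $r$-group), and $2$-groups such as $Q_8$ can act fixed-point-freely as well. The actual content of (iii) is that if every involution of $G/C_G(R)$ acted freely on $R$, then $R$ would be abelian and inverted by each involution, forcing the Sylow $2$-subgroups of $G/C_G(R)$, hence of its section $S$, to be cyclic or generalized quaternion, which is impossible for a nonabelian simple $S$ by Burnside transfer and Brauer--Suzuki. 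Your sketch skips exactly this step; without it the conclusion $2r\in\omega(G)$ does not follow. One also has to treat $r\in\pi(\overline G/S)$ separately via the structure of $\Out S$ and centralizers of field/diagonal/graph automorphisms, which you do not address.

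There are further gaps. In (i), your assertion that the complete bipartite graph on $\pi(S_1)\cup\pi(S_2)$ contains no $3$-coclique is wrong: three pairwise nonadjacent primes could all lie in $\pi(S_1)$; the genuine argument (this is essentially all of \cite{05Vas.t}) analyses where the primes of a coclique can sit relative to $K$, $\operatorname{Soc}(\overline G)$ and $\overline G/\operatorname{Soc}(\overline G)$, and the ``short case check'' you defer to is the theorem itself. In (ii), in the case $\{r,s\}\subseteq\pi(K)$ you propose to act with a $t$-element on a Hall $\{r,s\}$-subgroup of $K$; but that Hall subgroup is not normal in $G$, so there is no such action to exploit, and you also silently discard the $2$-Frobenius alternative in Gruenberg--Kegel. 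The working arguments (visible in miniature in the paper's Lemma \ref{l:nilp} and its use of Lemma \ref{l:frob}) instead pass to the Fitting subgroup and minimal normal subgroups of suitable quotients, use Hall--Higman $1.2.3$ to control centralizers, and apply the Frobenius-action lemma to a normal $w$-section. Your outline correctly identifies the shape of the problem and several of the right tools, but at each of the three decisive points it replaces the hard step with an assertion that is either false as stated or unsubstantiated, so it is not a proof.
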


The next lemma summarizes what we know about almost recognizable simple groups (see \cite{15VasGr1,15VasGr.t,17Sta}).

\begin{lemma}\label{l:isospectral} Let $L$ be one of the following nonabelian simple groups:
\begin{enumerate}
\item \label{t:sporadic} a sporadic group other than $J_2$;
\item \label{t:alternating} an alternating group $A_n$, where $n\neq6,10$;
\item \label{t:exceptional} an exceptional group of Lie type other than ${}^3D_4(2)$;
\item\label{t:linear} $L_n(q)$, where $n\geqslant27$ or $q$ is even;
\item\label{t:unitary} $U_n(q)$, where $n\geqslant27$, or $q$ is even and $(n,q)\neq (4,2),(5,2)$;
\item\label{t:BnCn} $S_{2n}(q),O_{2n+1}(q)$, where either $q$ is odd and $n\geqslant16$, or $q$ is even and $n\neq2,4$ and $(n,q)\neq(3,2)$;
\item\label{t:Dn} $O^+_{2n}(q)$, where either $q$ is odd and $n\geqslant19$, or $q$ is even and $(n,q)\neq(4,2)$;
\item\label{t:2Dn} $O^-_{2n}(q)$, where either $q$ is odd and $n\geqslant18$, or $q$ is even.
\end{enumerate}
Then every finite group isospectral to $L$ is isomorphic to some group $G$ with $L\leqslant G\leqslant \Aut L$. In particular,
there are only finitely many pairwise nonisomorphic finite groups isospectral to $L$.
\end{lemma}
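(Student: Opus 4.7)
Since Lemma~\ref{l:isospectral} aggregates quasirecognition-by-spectrum results proved family by family in \cite{15VasGr1,15VasGr.t,17Sta}, the plan is to reduce the verification of each case to four conceptual steps, reusing the general structural tool of Lemma~\ref{l:str} together with the arithmetic machinery assembled in Lemmas~\ref{l:r-part}--\ref{l:k_n}.

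First, for each $L$ in the list I would verify from the known prime-graph classification (Kondrat'ev's tables and their extensions) that $t(L)\geqslant 3$ and $t(2,L)\geqslant 2$. This is the hypothesis of Lemma~\ref{l:str}, so any finite $G$ with $\omega(G)=\omega(L)$ has a unique nonabelian composition factor $S$ with $S\leqslant \overline G=G/K\leqslant \Aut S$, and inherits the coclique bounds $t(S)\geqslant t(L)-1$ and $t(2,S)\geqslant t(2,L)$ needed for the next step.

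The second and central step is the quasirecognition $S\cong L$. Here one extracts from $\omega(S)\subseteq \omega(L)$ a collection of primitive prime divisors $r_m(q)\in R_m(q)$ whose adjacency pattern in $GK(L)$ encodes the characteristic, Lie rank, and field size of $L$. Lemma~\ref{l:bz} guarantees the existence of such primes, while Lemmas~\ref{l:r-part} and~\ref{l:k_n} control the corresponding $r$-parts and largest primitive divisors; matching these invariants against the classification of finite simple groups rules out every $S\not\cong L$. The numerical bounds in items~(iv)--(viii) are exactly those that secure enough primitive prime divisors to make this elimination succeed uniformly in each family.

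Third, I would show $K=1$ and $\overline{G}/S=1$, so that $L\leqslant G\leqslant \Aut L$. The outer quotient is typically ruled out by exhibiting an element order in $\omega(G)\setminus\omega(L)$ forced by any nontrivial field or graph automorphism of $S$; the solvable radical is eliminated by analysing the action of a carefully chosen maximal torus of $S$ on chief sections of $K$ and, again, reading off a forbidden element order via a primitive prime divisor. Finiteness of the isospectral family is then immediate from $|\Aut L|<\infty$. The main obstacle throughout is the quasirecognition step: the eight bounds in the statement reflect how far current primitive-prime-divisor techniques can push the classification, and narrowing them towards Conjecture~\ref{c:1} is what requires the delicate case analysis of small-rank classical groups still missing from the literature.
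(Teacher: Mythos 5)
The paper does not actually prove Lemma~\ref{l:isospectral}: it is stated as a summary of known results and is justified solely by the citations \cite{15VasGr1,15VasGr.t,17Sta} (for the classical families) together with the references \cite{98MazShi,13Gor.t,14VasSt.t,16Zve.t} given in the introduction for the sporadic, alternating and exceptional cases. Your proposal instead tries to sketch the internal machinery of those cited papers. As a description of the general strategy for the classical items (iv)--(viii) it is broadly faithful --- one does verify $t(L)\geqslant 3$ and $t(2,L)\geqslant 2$, apply Lemma~\ref{l:str}, identify $S$ with $L$ via primitive prime divisors and coclique patterns, and then eliminate $K$ and control $\overline{G}/S$ --- but it is a research plan rather than a proof. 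The ``central step'' $S\cong L$ is asserted to follow from ``matching invariants against the classification,'' with no indication of how any single family is handled; executing that step is the content of several long papers, and nothing in your outline could be checked or could visibly fail. So there is a genuine gap: the case analysis that constitutes the proof is entirely absent, and within the present paper the correct justification is precisely the citation the authors give.

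There are also concrete misstatements. Items (i) and (ii) concern sporadic and alternating groups, for which the primitive-prime-divisor and Lie-rank analysis you describe does not apply: there is no field size $q$ or Lie rank to recover, and the actual proofs in \cite{98MazShi} and \cite{13Gor.t} proceed differently (for most sporadic groups the prime graph is disconnected, so the Gruenberg--Kegel theorem rather than Lemma~\ref{l:str} does the structural work); moreover those groups are recognizable, not merely almost recognizable, which your scheme does not capture. In addition, your final step claims to show $\overline{G}/S=1$, which is stronger than what the lemma asserts (the conclusion is only $L\leqslant G\leqslant\Aut L$, and nontrivial outer parts genuinely occur). If you wish to justify the lemma here, cite the literature as the authors do; if you wish to reprove it, you must supply the family-by-family arguments, not a description of the kind of argument that would be needed.
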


Now we list the spectra of some groups of low Lie rank and give some lower bounds on the exponents of exceptional groups of Lie type.
Throughout the paper we repeatedly use, mostly without explicit references,  the description of the spectra of simple classical groups from \cite{10But.t} (with corrections from \cite[Lemma 2.3]{16Gr.t}) and \cite{08But.t},
as well as the adjacency criterion for the prime graphs of simple groups of Lie type from \cite{05VasVd.t} (with corrections from \cite{11VasVd.t}).
Also we use the abbreviations $L_n^\tau(u)$ and $E_6^\tau(u)$, where $\tau\in\{+,-\}$, that are defined as follows:  $L_n^+(u)=L_n(u)$, $L_n^-(u)=U_n(u)$, $E_6^+(u)=E_6(u)$ and $E_6^-(u)={}^2E_6(u)$.

\begin{lemma}[{\cite{08But.t}}]\label{l:spec_l4}
Let $q$ be a power of an odd prime $p$ and let $L=L_4^\tau(q)$. The set $\omega(L)$ consists of the divisors of the following numbers:
\begin{enumerate}
  \item $(q^2+1)(q+\tau)/(4,q-\tau)$, $(q^3-\tau)/(4,q-\tau)$, $q^2-1$, $p(q^2-1)/(4,q-\tau)$, $p(q-\tau)$;
  \item $9$ if $p=3$.
\end{enumerate}
In particular, $\gexp_{p'}(L)=(q^4-1)(q^2+\tau q+1)/2$.
\end{lemma}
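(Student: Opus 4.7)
The plan is to compute $\omega(L)$ by separately describing the orders of semisimple, unipotent, and mixed elements (of order $sp^a$ with $s$ a $p'$-element and $a\geqslant 1$). Throughout I would work inside $SL_4^\tau(q)$ and descend to $L=SL_4^\tau(q)/Z$, where the centre $Z$ has order $d=(4,q-\tau)$ and is contained in every maximal $\mathbb{F}_q$-torus.

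For the semisimple part I would enumerate the maximal tori of $SL_4^\tau(q)$: their conjugacy classes are parametrised by partitions of $4$ (with $q$ replaced by $-q$ in each part for the unitary case), giving five torus orders, only two of which come from cyclic tori. Computing the maximal element order in each torus and then the corresponding order in $L$ yields three candidates: $(q^2+1)(q+\tau)/d$ from the Coxeter torus of type $(4)$, $(q^3-\tau)/d$ from the torus of type $(3,1)$, and $q^2-1$ from the remaining tori of types $(2,2)$, $(2,1,1)$, and $(1,1,1,1)$; each of these orders is attained by an explicit element constructed from a generator of the relevant torus, so this gives both inclusions for the $p'$-part of $\omega(L)$. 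Since $p$ is odd, a unipotent element whose largest Jordan block has size $k\in\{1,2,3,4\}$ has order $p^{\lceil\log_p k\rceil}$, equal to $p$ in general and to $9$ when $p=3$ and $k\in\{3,4\}$, producing item~(ii). The formula $\gexp_{p'}(L)=(q^4-1)(q^2+\tau q+1)/2$ then follows from a direct least common multiple computation on the three semisimple bounds, with Lemma~\ref{l:r-part} supplying the $2$-parts of $q\pm 1$ and $q^2+1$ needed to explain the denominator $2$.

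For mixed elements $g=su$ with $[s,u]=1$, $s\ne 1$ of $p'$-order, and $u\ne 1$ unipotent, I would use the standard decomposition of $C_{GL_4^\tau(q)}(s)$ as a product of general linear (or unitary) groups over extensions of $\mathbb{F}_q$, determined by the eigenvalue pattern of $s$ over $\overline{\mathbb{F}_q}$. A nontrivial unipotent commutes with $s$ only when some factor of this centraliser has positive Lie rank, which forces the order of $s$ to divide either $q^2-1$ (so $g$ has order dividing $p(q^2-1)/d$ in $L$) or merely $q-\tau$ (giving $p(q-\tau)$ when a Jordan block of size $3$ or $4$ is involved). The main obstacle lies precisely in this mixed-element step: one must keep careful track of how passage to $L=SL_4^\tau(q)/Z$ modifies each centraliser contribution and rule out further possibilities---in particular showing that any semisimple element whose order is divisible by a primitive prime divisor of $q^4-1$ or of $q^3-\tau$ centralises no nontrivial unipotent of $SL_4^\tau(q)$. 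Once this bookkeeping is done, items~(i) and~(ii) together exhaust $\omega(L)$.
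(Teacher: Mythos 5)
The paper does not prove this lemma at all: it is imported verbatim from Buturlakin's computation of the spectra of linear and unitary groups \cite{08But.t}, so there is no in-paper argument to compare against. Your strategy (maximal tori for semisimple orders, Jordan block sizes for unipotent orders, centralizers of semisimple elements for mixed orders, then descent to the central quotient) is exactly the method of that reference, and the semisimple and unipotent parts of your sketch are sound. One small slip: for $p=3$ a Jordan block of size $3$ has order $3$, not $9$, since $(J-I)^3=0$; your own formula $p^{\lceil\log_p k\rceil}$ gives this correctly, and the value $9$ comes only from the regular unipotent class $k=4$, so item~(ii) survives.

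The genuine gap is that you defer precisely the step in which the lemma's content lives. The statement is a list of exact integers, and the numbers $p(q^2-1)/(4,q-\tau)$ and $p(q-\tau)$ are produced by the mixed-element bookkeeping you postpone; until it is done you have only the shape of the answer. This bookkeeping is not routine, because the determinant-one condition and the passage to $L$ interact nontrivially with the centralizer decomposition. For instance, with $\tau=+$ and $s=\operatorname{diag}(a,a,x,x^{q})$, $a\in\mathbb{F}_q^{*}$, $x\in\mathbb{F}_{q^2}^{*}$, the condition $a^{2}x^{q+1}=1$ forces $x^{q+1}$ to be a square in $\mathbb{F}_q^{*}$, so $|x|=q^{2}-1$ is impossible and the naive bound $p(q^2-1)$ is never attained in $SL_4(q)$; while for $q\equiv 3\pmod 4$ the centralizer type $GL_2(q^{2})\cap SL_4(q)$ yields elements of order $2p(q+1)$ in $SL_4(q)$, which does \emph{not} divide $p(q^2-1)/(4,q-1)$ --- the divisibility is restored only after one checks that the $(q+1)$-st power of the relevant semisimple part is the central involution $-I$, so the order drops to $p(q+1)$ in $L$. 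Without carrying out this case analysis for every centralizer type (and its unitary analogue), both the completeness of list~(i) and the attainability of each listed value remain unproved; I would either supply that analysis in full or simply cite \cite{08But.t} as the paper does.
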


\begin{lemma}[{\cite{08But.t}}]\label{l:spec_l6}
Let $q$ be a power of an odd prime $p$ and let $L=L_6^\tau(q)$. The set $\omega(L)$ consists of the divisors of the following numbers:
\begin{enumerate}
  \item $(q^3+\tau)(q^2+\tau q+1)/(6,q-\tau)$, $(q^5-\tau)/(6,q-\tau)$,
  $q^4-1$, $p(q^4-1)/(6,q-\tau)$, $p(q^3-\tau)$, $p(q^2-1)$;
  \item $p^2$ if $p=3,5$.
  \end{enumerate}
In particular, $\gexp_{p'}(L)=(q^6-1)(q^5-\tau)(q^2+1)/(q-\tau)$.
\end{lemma}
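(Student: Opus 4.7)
My plan is to determine $\omega(L)$ for $L=L_6^\tau(q)=SL_6^\tau(q)/Z$, where $|Z|=d=(6,q-\tau)$, by a case analysis on the Jordan decomposition. Every element of $L$ lifts to a unique $g=su=us\in SL_6^\tau(q)$ with $s$ semisimple (order coprime to $p$) and $u$ unipotent ($p$-power order) commuting; hence $|g|=|s|\cdot|u|$ in $SL_6^\tau(q)$, and the image in $L$ has order equal to the smallest $m$ with $g^m\in Z$. Accordingly, I would enumerate (a) the possible semisimple orders, (b) the possible unipotent orders, and (c) the mixed products realized by genuinely commuting pairs, tracking in each case the effect of passing to the quotient by $Z$.

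For (a), the semisimple conjugacy classes of $SL_6^\tau(q)$ are distributed over maximal tori, each indexed by a partition $(n_1,\dots,n_k)$ of $6$ (with a sign attached to each part in the unitary case). The torus associated to such a partition sits inside $\prod_i\mathbb{F}_{q^{n_i}}^*$ cut out by the determinant-$1$ condition, and its exponent divides $\operatorname{lcm}_i(q^{n_i}-\tau^{n_i})$. Running over the eleven partitions of $6$, taking the determinant quotient, and then further quotienting by the central subgroup of order $d$, only three maximal $p'$-numbers survive: the partition $(6)$ contributes $(q^6-1)/((q-\tau)d)$, which by the identity $(q^3+\tau)(q^2+\tau q+1)=(q^6-1)/(q-\tau)$ equals $(q^3+\tau)(q^2+\tau q+1)/d$; the partition $(5,1)$ contributes $(q^5-\tau)/d$; and the partitions containing a part of size $4$ contribute $q^4-1$. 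All other partitions give divisors of these.

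For (b), a unipotent of $SL_6^\tau(q)$ is a block sum of Jordan blocks partitioning $6$; a single block of size $k$ has order $p^{\lceil\log_p k\rceil}$, so the unipotent exponent is $p$ when $p\geqslant 7$ and $p^2$ when $p\in\{3,5\}$, explaining item~(2). For (c), pairing a semisimple $s$ with a unipotent $u\in C_{SL_6^\tau(q)}(s)$ and writing the centralizer as a product of general linear factors dictated by the eigenvalue pattern of $s$, the maximal mixed products come from: a $(4)$-torus for $s$ paired with a transvection on the remaining $2$-space, yielding $p(q^4-1)/d$; a $(3)$-torus paired with a unipotent of order $p$ on the complementary $3$-space, yielding $p(q^3-\tau)$; and a $(2)$-torus paired with a unipotent of order $p$ on the remaining $4$-space, yielding $p(q^2-1)$.

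The main obstacle is the determinant-and-center bookkeeping needed to show that no other mixed combination exceeds the listed values. The key check is that a large Jordan block (size $\geqslant p+1$, which is the source of a unipotent of order $p^2$) has a centralizer in which any commuting semisimple is forced to be essentially scalar on the block; the $SL$ determinant constraint then collapses the semisimple order, and upon passing to $L$ any potentially larger element order must be verified to already divide one of the listed numbers. Once items~(1) and~(2) are in hand, the formula $\gexp_{p'}(L)=(q^6-1)(q^5-\tau)(q^2+1)/(q-\tau)$ follows by taking the least common multiple of the three $p'$-numbers in~(1) and simplifying via the identity above together with $q^4-1=(q^2-1)(q^2+1)$.
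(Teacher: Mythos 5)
The paper does not prove this lemma: it is quoted from Buturlakin's determination of the spectra of linear and unitary groups, so there is no internal argument to measure yours against, and your outline (Jordan decomposition, maximal tori indexed by partitions of $6$, centralizers of semisimple elements) is indeed the strategy of that source. The problem is that the two steps you defer as ``bookkeeping'' are where all the content lies, and the shortcuts you propose for them are incorrect. First, your claim that after the determinant and central quotients ``all other partitions give divisors of'' the three numbers $(q^6-1)/((q-\tau)d)$, $(q^5-\tau)/d$, $q^4-1$ fails for the partition $(3,2,1)$. Take $q=3$: let $A$ be a Singer generator of $GL_3(3)$ (order $26$, determinant $-1$) and $B$ a Singer generator of $GL_2(3)$ (order $8$, determinant $-1$). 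Then $g=\mathrm{diag}(A,B,1)$ lies in $SL_6(3)$, has order $\operatorname{lcm}(26,8)=104$, and no proper power of $g$ is scalar because the last diagonal entry forces the scalar to be $1$; hence $104=2^3\cdot 13$ is an element order of $L_6(3)$. It divides none of $182$, $121$, $80$ --- indeed none of the numbers in the statement --- so the remaining partitions cannot simply be discarded: in Buturlakin's theorem the least common multiples $[q^{n_1}-\tau^{n_1},\dots,q^{n_s}-\tau^{n_s}]$ attached to partitions with three or more parts appear as separate entries of the list, and reconciling them with a short list like the one above is precisely the nontrivial part.

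Second, your ``key check'' for mixed elements --- that a large Jordan block forces the commuting semisimple part to be essentially scalar, whereupon the determinant condition collapses its order --- does not do what you want. The semisimple part is indeed scalar on the block, but the determinant can be repaired on the complement: for $p=3$ the element $\mathrm{diag}(J_4,h)$, with $J_4$ a unipotent Jordan block of size $4$ (order $9$ in characteristic $3$) and $h\in SL_2(3)$ of order $6$, lies in $SL_6(3)$ and has order $18$, and its ninth power $\mathrm{diag}(I_4,-I_2)$ is not scalar, so $18$ is an element order of $L_6(3)$ even though it divides neither $p^2=9$ nor any of the numbers $p(\cdots)$ in item~(i). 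So the assertion that the only contribution beyond item~(i) is the bare $p^2$ also requires a genuine argument rather than the heuristic you give (and these examples show that the divisibility analysis has to be confronted head on, not assumed to come out in favour of the six listed numbers). Until the partition-by-partition divisibility checks and the mixed-order bounds are actually carried out, the proposal does not establish the lemma.
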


\begin{lemma}[{\rm \cite{62Suz}}]\label{l:spec_2b2}
Let $u=2^{2k+1}\geqslant 8$. Then $\omega({}^2B_2(u))$ consists of the divisors of the numbers $4$, $u-1$, $u-\sqrt{2u}+1$, and $u+\sqrt{2u}+1$.
In particular, $\gexp({}^2B_2(u))=4(u^2+1)(u-1)$.
\end{lemma}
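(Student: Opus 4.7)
The plan is to exploit the well-understood internal structure of $G={}^2B_2(u)$, where $u=2^{2k+1}$, and to determine directly which element orders can occur. Set $s=\sqrt{2u}=2^{k+1}$, so that $|G|=u^2(u-1)(u^2+1)=u^2(u-1)(u+s+1)(u-s+1)$. The first step is to verify that the three odd factors $u-1$, $u+s+1$ and $u-s+1$ are pairwise coprime: any common prime divisor of two of them would have to divide one of their pairwise differences, namely $\pm s$ or $\pm 2s$, which are powers of $2$, contradicting the fact that the three numbers themselves are odd.

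Next I would record the Sylow and semisimple structure of $G$. A Sylow $2$-subgroup $P$ has order $u^2$ and nilpotency class $2$, with $Z(P)=P'$ elementary abelian of order $u$, and, using Suzuki's explicit realisation from \cite{62Suz}, every element of $P$ squares into $Z(P)$; together these give $\gexp(P)=4$. Parallel to this, $G$ contains exactly three conjugacy classes of maximal cyclic tori $T_0,T_1,T_2$ of orders $u-1$, $u+s+1$ and $u-s+1$ respectively; the normaliser $N_G(T_0)$ is dihedral of order $2(u-1)$, while for $i=1,2$ the quotient $N_G(T_i)/T_i$ is cyclic of order $4$ lifted by elements of order $4$ in $G$.

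The decisive ingredient, and the part that requires genuine work rather than formal calculation, is Suzuki's theorem that every element of $G$ lies in a conjugate of $P$, $N_G(T_0)$, $N_G(T_1)$ or $N_G(T_2)$; this is established in \cite{62Suz} via the doubly transitive action of $G$ on the Suzuki ovoid and an explicit analysis of centralisers. Granting this, an element of $P$ has order dividing $4$, an element of $T_i$ has order dividing the corresponding torus order, and any element of a torus normaliser outside the torus is a $2$-element of order at most $4$; hence $\omega(G)$ consists exactly of the divisors of $4$, $u-1$, $u+s+1$ and $u-s+1$. The exponent formula is then immediate: by the first step these four numbers are pairwise coprime (and $4$ is coprime to each of the others because they are odd), so $\gexp(G)$ equals their product $4(u-1)(u+s+1)(u-s+1)=4(u-1)(u^2+1)$.
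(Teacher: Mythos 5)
The paper offers no proof of this lemma beyond the citation of Suzuki's original article, and your reconstruction follows exactly the route that citation encodes: the partition of ${}^2B_2(u)$ into conjugates of the Sylow $2$-subgroup and of the three cyclic Hall subgroups of orders $u-1$ and $u\pm\sqrt{2u}+1$, the exponent-$4$ property of the Suzuki $2$-group, and the Frobenius structure of the torus normalisers. All of that is correct and is the right way to read off $\omega({}^2B_2(u))$ from \cite{62Suz}.

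There is, however, one step that fails as written: the coprimality verification. With $s=\sqrt{2u}=2^{k+1}$, the pairwise differences of $u-1$, $u+s+1$ and $u-s+1$ are $2s$, $s+2$ and $s-2$, and the latter two are \emph{not} powers of $2$: for $k\geqslant 1$ one has $s\pm2=2(2^k\pm1)$ with odd part $2^k\pm1>1$. So for the pairs involving $u-1$ the difference alone does not rule out a common odd prime divisor. The repair is immediate: if an odd prime $r$ divides both $u-1$ and $u+s+1$, then $s\equiv-2\pmod r$, hence $2u=s^2\equiv4\pmod r$ and $u\equiv2\pmod r$, contradicting $u\equiv1\pmod r$; alternatively, $u^2+1=(u-1)(u+1)+2$ gives $(u-1,u^2+1)=(u-1,2)=1$ directly since $u-1$ is odd. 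With that correction the least common multiple of $4$, $u-1$, $u+s+1$, $u-s+1$ is indeed their product, and $\gexp({}^2B_2(u))=4(u^2+1)(u-1)$ follows as you state.
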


\begin{lemma}\label{l:spec_g2}
Let $u$ be a power of a prime $v$. Then $\omega(G_2(u))$ consists of the divisors of the numbers $u^2\pm u+1$,
$u^2-1$, and $v(u\pm 1)$ together with the divisors of
\begin{enumerate}
 \item $8$, $12$ if $v=2$;
 \item $v^2$ if $v=3,5$.
\end{enumerate}
In particular, $\gexp_{v'}(S)=(u^6-1)/(3,u^2-1)$. Furthermore, if a Sylow $r$-subgroup of $G_2(u)$ is cyclic, then $r$ divides $u^2+u+1$ or $u^2-u+1$.
\end{lemma}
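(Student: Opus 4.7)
The plan is to partition $\omega(S)$ for $S=G_2(u)$ by Jordan decomposition into semisimple, unipotent, and mixed (semisimple-times-unipotent) element orders, combine the three lists, and then derive the two ``in particular'' assertions by elementary arithmetic.

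For the semisimple part I would use the standard description of the maximal tori of the simply connected group of type $G_2$: up to conjugacy they have orders $(u-1)^2$, $(u+1)^2$, $u^2-1$, $u^2+u+1$, and $u^2-u+1$. The first two are isomorphic to $C_{u-1}\times C_{u-1}$ and $C_{u+1}\times C_{u+1}$, so they contribute no cyclic orders beyond the divisors of $u^2-1$, while the remaining three tori are cyclic. Hence every semisimple element has order dividing one of $u^2-1$, $u^2+u+1$, $u^2-u+1$.

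For the unipotent part I would invoke the classification of unipotent classes in $G_2(u)$ due to Enomoto and Chang: the exponent of a Sylow $v$-subgroup equals $v$ when $v\geqslant 7$, equals $v^2$ when $v\in\{3,5\}$, and equals $8$ when $v=2$. Mixed orders are then obtained from centralizers of semisimple elements. The centralizer of a long root subgroup contains an $SL_2$-type Levi factor, which together with a non-split analogue yields mixed orders $v(u-1)$ and $v(u+1)$; in characteristic $v=2$ the additional order $12$ arises from the product of an element of order $4$ in a unipotent radical with a commuting semisimple element of order $3$, noting that $3\mid u^2-1$. That no further orders appear can be checked via the adjacency criterion for prime graphs of simple groups of Lie type from \cite{05VasVd.t,11VasVd.t}.

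The two ``in particular'' statements follow by short arithmetic. Since $(u^2-1)(u^2+u+1)(u^2-u+1)=u^6-1$, while $(u^2+u+1,u^2-u+1)=1$ and $(u^2-1,u^2\pm u+1)=(3,u\mp 1)$, one obtains
\[
\gexp_{v'}(S)=[u^2-1,u^2+u+1,u^2-u+1]=\frac{u^6-1}{(3,u^2-1)}.
\]
For the cyclic Sylow claim, if $r\mid u-1$ (respectively $r\mid u+1$) then the Sylow $r$-subgroup of $S$ contains the $r$-part of the torus $C_{u-1}\times C_{u-1}$ (respectively $C_{u+1}\times C_{u+1}$), which has rank at least $2$; hence whenever a Sylow $r$-subgroup is cyclic, $r$ must divide $u^2+u+1$ or $u^2-u+1$. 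The main obstacle is the exhaustive verification that no element orders outside the listed set occur, especially in the bad characteristics $v=2,3$ and the borderline case $v=5$ where the unipotent exponent jumps to $v^2$; this is a concrete but case-heavy task that depends on the precise structure of unipotent class representatives and their centralizers, and in practice one invokes the existing tables in the literature on spectra of exceptional groups rather than redoing the computation from scratch.
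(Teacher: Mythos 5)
Your plan is sound and is essentially the approach underlying the paper's proof, which consists entirely of the citation ``See \cite{84Der} and \cite{87Asch}'' --- that is, the semisimple/unipotent/mixed decomposition read off from the maximal tori of $G_2(u)$, the unipotent class data, and the centralizers of semisimple elements, followed by the same elementary gcd computation $(u^2-1,u^2\pm u+1)=(3,u\mp 1)$ for the exponent and the rank-two tori $C_{u\mp1}\times C_{u\mp1}$ for the cyclic-Sylow claim. The only place your plan is thinner than it needs to be is the exclusion of extra mixed orders such as $4(u\pm1)$ for $v=2$ or $v^2d$ with $d>1$ for $v=3,5$: the prime-graph adjacency criterion of \cite{05VasVd.t,11VasVd.t} only detects products of two distinct primes and cannot rule out orders involving higher prime powers, so that step genuinely requires the centralizer-of-unipotent-element tables you (and the paper) fall back on.
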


\begin{proof}
 See \cite{84Der} and \cite{87Asch}.
\end{proof}

\begin{lemma}\label{l:exp_e} If $S$ and $f(u)$ are as follows, then $\gexp(S)>f(u)$.
\renewcommand{\arraystretch}{1.5}
$$\begin{array}{|c|cccccc|}
\hline
S& E_8(u)& E_7(u)& E_6^\pm(u)& F_4(u)& ^2F_4(u)& {}^2G_2(u)\\
\hline
f(u)& 2u^{80}& 3u^{48}& u^{26}& 3u^{16}& 5u^{10}& 2u^4\\
\hline
\end{array}$$
\end{lemma}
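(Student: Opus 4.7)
My plan is, for each exceptional group $S$ in the table, to exhibit a family of pairwise coprime divisors of $\gexp(S)$ whose product exceeds $f(u)$. The divisors are of two kinds: a \emph{unipotent} contribution equal to the order $p^{\lceil\log_p h\rceil}$ of a regular unipotent element of $S$ (where $p$ is the defining characteristic and $h$ is the Coxeter number of $S$), and \emph{semisimple} contributions $k_m(u)$ from Lemma~\ref{l:k_n}, one for each integer $m$ such that $\Phi_m(x)$ divides the order polynomial of $S$. Indeed, whenever $\Phi_m(x)$ divides $|S|_{x'}$ the integer $\Phi_m(u)$ divides the order of some maximal torus of $S$, and that torus contains a semisimple element of order $k_m(u)$; by Lemma~\ref{l:bz} the primes in the sets $R_m(u)$ are disjoint for distinct $m$, so the $k_m(u)$'s are pairwise coprime.

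Taking $M$ to be the full set of such~$m$, direct inspection of the order polynomials yields $\sum_{m\in M}\varphi(m)=80,\,48,\,26,\,16$ for $E_8(u)$, $E_7(u)$, $E_6^\pm(u)$, $F_4(u)$, exactly matching the $u$-degree of $f(u)$ in each row. Combining the resulting product with the unipotent factor, and using the lower bound $k_m(u)\geqslant\Phi_m(u)/r$ from Lemma~\ref{l:k_n}, a short polynomial comparison produces $\gexp(S)>f(u)$. The twisted groups ${}^2F_4(u)$ and ${}^2G_2(u)$ are treated in parallel using the Suzuki--Ree description of the twisted maximal tori (analogous to Lemma~\ref{l:spec_2b2}): in ${}^2G_2(u)$ one uses the four cyclic tori of orders $u-1$, $u+1$, $u\pm\sqrt{3u}+1$ of combined $u$-degree~$4$, and in ${}^2F_4(u)$ the analogous tori of orders $u\pm 1$, $u^2+1$, $u^2-u+1$, $u^2\pm u\sqrt{2u}+u\pm\sqrt{2u}+1$ of combined $u$-degree~$10$, each coupled with a suitable unipotent factor.

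The main obstacle is making the inequality strict at the smallest admissible values of $u$ (typically $u=2$ for the untwisted cases), where the losses $\Phi_m(u)/k_m(u)$ from Lemma~\ref{l:k_n} together with the Bang--Zsigmondy exceptions (such as $k_6(2)=1$) eat into the margin. I would close such gaps by appealing to a non-cyclic mixed-type maximal torus that supplies a larger Sylow $r$-exponent for a small prime $r$: for instance, the $A_5$-subsystem of $E_8(2)$ contains a cyclic torus of order $63=3^2\cdot 7$, so the Sylow $3$-exponent of $E_8(2)$ is~$9$ rather than the~$3$ seen in $k_2(2)$, supplying exactly the extra factor needed to exceed $2u^{80}=2^{81}$. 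Analogous small refinements, or brief numerical checks against tabulated torus structures, dispatch the remaining boundary cases.
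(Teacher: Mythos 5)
Your route is genuinely different from the paper's, which disposes of $E_8(u)$, $E_7(u)$, $E_6^{\pm}(u)$ and ${}^2F_4(u)$ by citing \cite[Lemma 3.6]{19GrVasZv} and handles the remaining two cases by reading off exact exponent formulas: $\gexp({}^2G_2(u))=9(u^3+1)(u-1)/4$ from \cite{66Wa}, and $\gexp_{v}(F_4(u))\geqslant 13$ together with $\gexp_{v'}(F_4(u))=(u^{12}-1)(u^4+1)/(2,u-1)^2$ from \cite{84Der}. Unfortunately your version contains a genuine gap, and it is not confined to small $u$ as you suggest. Your degree counts $\sum_{m\in M}\varphi(m)=80,48,26,16$ are correct, but precisely because they \emph{equal} the exponent of $u$ in $f(u)$, the product $\prod_{m\in M}\Phi_m(u)$ is only about $u^{\deg f}$ with no polynomial margin; every constant factor lost in passing from $\Phi_m(u)$ to $k_m(u)$ must then be recouped from the unipotent factor alone, and by Lemma \ref{l:k_n} such losses occur for several $m$ simultaneously and multiply. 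Take $S=E_8(7)$: one checks that $k_1(7)=\Phi_1(7)/2$, $k_3(7)=\Phi_3(7)/3$, $k_4(7)=\Phi_4(7)/2$, $k_8(7)=\Phi_8(7)/2$, $k_9(7)=\Phi_9(7)/3$ and $k_{20}(7)=\Phi_{20}(7)/5$ --- a combined loss of $360$ --- while the regular unipotent element contributes only $7^2=49$ and $\prod_{m\in M}\Phi_m(7)$ is essentially $7^{80}$. Your lower bound is then roughly $0.14\cdot 7^{80}$, short of $2\cdot 7^{80}$ by a factor of about $15$. Similarly, for $E_6(u)$ with $u\equiv 1\pmod{60}$ the losses again multiply to $360$ against a unipotent factor of at most $u$-independent size, and the bound falls below even $u^{26}$. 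So the failure is structural, not a boundary effect at $u=2$, and the promised ``short polynomial comparison'' does not exist.

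The missing ingredient is that for a prime $r$ with small $e(r,u)$ the Sylow $r$-exponent of $S$ is much larger than $(\Phi_{e(r,u)}(u))_r$: a cyclic torus of order $u^d-1$ for large $d$ (for instance $d=8$ in $E_8(u)$, via a $GL_8(u)$ inside a subsystem subgroup) carries elements whose $r$-part is $(u^d-1)_r$, and by Lemma \ref{l:r-part} this recovers exactly the factors $(d)_r$ that your $k_m$'s discard. In other words, one must bound $\gexp_{v'}(S)$ by the least common multiple of the exponents of the maximal tori rather than by $\prod_m k_m(u)$; for $F_4(u)$ this lcm is the full product $\prod_{m\mid 12}\Phi_m(u)\cdot\Phi_8(u)$ divided only by $(2,u-1)^2$, which is how the paper clears the same degree-$16$ hurdle. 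Your $E_8(2)$ patch (boosting $\gexp_3$ to $9$ via a torus of order $63$) is an instance of the right idea, but it has to be carried out systematically for every small prime and every congruence class of $u$, at which point you are recomputing the exponents of \cite{19GrVasZv} rather than bypassing them. Your treatment of ${}^2G_2(u)$, and the general observation that the $k_m(u)$ are pairwise coprime divisors of $\gexp(S)$, are fine.
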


\begin{proof}
If $S\neq F_4(u)$, $^2G_2(u)$, the assertion is proved in \cite[Lemma 3.6]{19GrVasZv}. It follows from \cite{66Wa} that $\gexp({}^2G_2(u))=9(u^3+1)(u-1)/4$,
and so $\gexp({}^2G_2(u))>9(u^4-u^3)/4>2u^4$.

Let $S=F_4(u)$ and let $u$ be a power of a prime $v$. Using \cite{84Der} and Lemma \ref{l:r-part}, it is not hard to see that $\gexp_v(F_4(u))\geqslant 13$ and
$\gexp_{v'}(F_4(u))=(u^{12}-1)(u^4+1)/(2,u-1)^2$, so we have the desired bound.
\end{proof}

\begin{lemma}\label{l:v}
Let $S$ be a finite simple group of Lie type. If $r,s,t\in\pi(S)$ and $rt,st\in\omega(S)$, but $rs\not\in\omega(S)$, then a Sylow $t$-subgroup of $S$ is not cyclic.
\end{lemma}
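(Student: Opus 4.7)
I plan to argue by contradiction: assume the Sylow $t$-subgroup $T$ of $S$ is cyclic, and derive that $rs\in\omega(S)$. Cyclicity of $T$ means that it contains a unique subgroup of order $t$, and hence every order-$t$ cyclic subgroup of $S$ is the distinguished order-$t$ subgroup of some Sylow $t$-subgroup; by Sylow's theorem all such cyclic subgroups are $S$-conjugate. Fixing $y\in S$ of order $t$, and noting that $C_S(y)=C_S(\langle y\rangle)$, from the commuting witnesses of $rt,st\in\omega(S)$ and appropriate conjugation I obtain elements $x_r,x_s\in H:=C_S(y)$ of orders $r$ and $s$, respectively. In $H$, the element $y$ is central and $T$ is a cyclic Sylow $t$-subgroup, so it suffices to exhibit an element of order $rs$ in $H$, which would give $rs\in\omega(H)\subseteq\omega(S)$.

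The main tool is the adjacency criterion for the prime graph of a simple group of Lie type from \cite{05VasVd.t}. For semisimple primes (i.e.\ $t\neq p$, the defining characteristic of $S$), it expresses adjacency in $GK(S)$ in terms of the invariants $e(\cdot,q)$, where $q$ is the order of the underlying field. Under this framework, a Sylow $t$-subgroup is cyclic exactly when $t$ is a primitive prime divisor for a single index $m=e(t,q)$ and the full $t$-part of $|S|$ equals the corresponding largest primitive divisor $k_m(q)$ (cf.~Lemma~\ref{l:k_n}); equivalently, $T$ lies in a single maximal torus $T_0$ of $S$ up to $S$-conjugacy, and this $T_0$ is contained in $H$. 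Any prime $\ell$ adjacent to $t$ in $GK(S)$ must then satisfy $e(\ell,q)=m$, so $\ell$ divides $|T_0|$; consequently the set of primes adjacent to $t$ forms a clique. Applied to $r$ and $s$, this yields $rs\in\omega(T_0)\subseteq\omega(S)$, contradicting the hypothesis.

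The hardest part is translating the cyclicity hypothesis into the uniqueness of $T_0$ uniformly across the various Lie types, and handling edge cases. When $t=p$ is the defining characteristic, cyclicity of $T$ forces $S$ to be of very small Lie rank, and the lemma is verified directly from the explicit spectra (e.g.\ via Lemma~\ref{l:spec_2b2} for Suzuki groups); when $t\in\{2,3\}$, the adjacency criterion of \cite{05VasVd.t} acquires extra refinements depending on the parity of $q$ and on the order of the Schur multiplier, which must be handled case by case; and for the exceptional types one checks separately that a cyclic Sylow $t$-subgroup always lies in a unique (up to conjugacy) maximal torus, drawing on the explicit spectra collected in Lemmas~\ref{l:spec_l4}--\ref{l:spec_g2} and on Lemma~\ref{l:exp_e}.
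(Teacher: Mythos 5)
Your high-level plan --- pass to the contrapositive and show that a cyclic Sylow $t$-subgroup forces the neighbourhood of $t$ in $GK(S)$ to be a clique --- is sound and close in spirit to the paper's argument, but the two claims that carry your proof are false, and the reduction built on them does not close. Cyclicity of the Sylow $t$-subgroup does \emph{not} imply that it lies in a unique maximal torus $T_0$ up to conjugacy, nor that every neighbour $\ell$ of $t$ satisfies $e(\ell,q)=e(t,q)$ and divides $|T_0|$. Concretely, for $S=L_9(q)$ and $t\in R_5(q)$ the Sylow $t$-subgroup is cyclic of order $(q^5-1)_t$, yet it sits inside the pairwise nonconjugate maximal tori attached to the partitions $(5,4)$, $(5,3,1)$, $(5,2,2),\dots$ of $9$; the primes $r\in R_4(q)$ and $s\in R_3(q)$ are both adjacent to $t$, have $e(\cdot,q)\neq 5$, and no maximal torus containing the Sylow $t$-subgroup has order divisible by both of them. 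Worse, your reduction ``it suffices to find an element of order $rs$ in $H=C_S(y)$'' is a reduction to a false statement in this example: $C_S(y)$ is essentially $GL_1(q^5)\times GL_4(q)$ modulo the centre, and $rs\notin\omega(GL_4(q))$ since $4+3>4$, while $r$ and $s$ do not divide $q^5-1$; the element of order $rs$ in $S$ lives in the torus for the partition $(4,3,2)$, which does not meet $\langle y\rangle$ at all. What actually works is the quantitative form of the adjacency criterion: in the notation of \cite{15Vas}, cyclicity of the Sylow $t$-subgroup forces $\varphi(t,S)>n/2$ with $n=\operatorname{prk}(S)$, adjacency of $\ell$ to $t$ forces $\varphi(\ell,S)\leqslant n-\varphi(t,S)<n/2$, and then any two neighbours $\ell_1,\ell_2$ of $t$ satisfy $\varphi(\ell_1,S)+\varphi(\ell_2,S)<n$ and are adjacent --- no common torus with $t$ is ever invoked. (The paper runs this forwards: $rs\notin\omega(S)$ makes one of $\varphi(r,S),\varphi(s,S)$ exceed $n/2$, whence $\varphi(t,S)\leqslant n/2$ and the Sylow $t$-subgroup is noncyclic.)

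Two further omissions. You treat the case where $t$ is the defining characteristic of $S$, but not the case where $r$ or $s$ is; there the witness for $rt\in\omega(S)$ is not semisimple, the defining characteristic divides the order of no maximal torus, and adjacency is governed by a separate clause of the criterion in \cite{05VasVd.t,11VasVd.t} that must be folded into the clique statement. And for classical groups with $\operatorname{prk}(S)\leqslant 3$ the inequality form of the criterion has genuine exceptions, so, as the paper does, these groups together with the exceptional types have to be checked directly against the known structure of their maximal tori rather than by the generic argument.
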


\begin{proof} For classical groups with $n=\operatorname{prk}(S)\geqslant4$ it easily follows as $\varphi(t,S)\leqslant n/2$ (in the sense of~\cite{15Vas}). For exceptional groups of Lie type and
classical groups with $\operatorname{prk}(S)\leq3$, this can be checked directly with the help of \cite{05VasVd.t,11VasVd.t} and known information on the structure of their maximal tori.
\end{proof}

The next five lemmas are tools for calculating the orders of elements in group extensions. Most of them are corollaries of well-known results (such as the Hall--Higman theorem).

\begin{lemma}\label{l:nonc} Suppose that $G$ is a finite group,  $K$ is a normal subgroup of $G$
and $w\in\pi(K)$. If $G/K$ has a noncyclic Sylow $t$-subgroup
for some odd prime $t\neq w$, then $tw\in\omega(G)$.
\end{lemma}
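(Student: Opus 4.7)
My approach combines the Frattini argument with coprime action on a Sylow $w$-subgroup of $K$. Let $W$ be a Sylow $w$-subgroup of $K$; since $w \in \pi(K)$, $W$ is nontrivial. The Frattini argument gives $G = K\,N_G(W)$, so $N_G(W)/N_K(W) \cong G/K$; hence $N_G(W)/N_K(W)$ has a noncyclic Sylow $t$-subgroup, and, pulling back, a Sylow $t$-subgroup $T$ of the full preimage in $N_G(W)$ surjects onto this noncyclic quotient. Consequently $T$ is itself a noncyclic $t$-subgroup of $N_G(W)$.

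Because $t$ is odd, the classical description of minimal noncyclic $p$-groups for odd $p$ ensures that $T$ contains an elementary abelian subgroup $A$ of rank $2$. Now $A$ acts on the nontrivial $w$-group $W$ by conjugation with $(|A|,|W|)=1$, and the key claim is that some nontrivial $a \in A$ satisfies $C_W(a) \neq 1$. To see this, pass to the nontrivial elementary abelian characteristic subgroup $\Omega_1(Z(W))$, which is an $\mathbb{F}_w A$-module. By Maschke's theorem, $\Omega_1(Z(W)) \otimes_{\mathbb{F}_w} \overline{\mathbb{F}}_w$ decomposes as a direct sum of $1$-dimensional $A$-characters; since $A$ is noncyclic while $\overline{\mathbb{F}}_w^\times$ has only cyclic finite subgroups, every such character has nontrivial kernel. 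Hence some nontrivial $a \in A$ has eigenvalue $1$ on this module, and so fixes a nontrivial element of $\Omega_1(Z(W)) \subseteq W$.

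Given such an $a$ (necessarily of order $t$) together with $b \in C_W(a) \setminus \{1\}$, taken to have order $w$ by replacing $b$ with a suitable power, the commuting elements $a$ and $b$ have coprime orders $t$ and $w$, so $ab \in G$ has order $tw$, establishing $tw \in \omega(G)$. The subtlest ingredient is the coprime-action step: it exploits both the oddness of $t$ (needed for the rank-$2$ elementary abelian subgroup) and the reduction to the abelian section $\Omega_1(Z(W))$, though each piece is a standard application of complete reducibility and the classification of minimal noncyclic $p$-groups.
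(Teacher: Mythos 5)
Your proof is correct, and it opens exactly as the paper's does: the Frattini argument applied to a Sylow $w$-subgroup $W$ of $K$ produces a noncyclic Sylow $t$-subgroup $T$ of $N_G(W)$ acting on $W$. The difference is in how the fixed point is found. The paper finishes in one line by citing the classification of Frobenius complements: a noncyclic $t$-group with $t$ odd cannot be a Frobenius complement, hence cannot act fixed-point-freely on $W$. You instead prove the needed fragment of that classification from scratch: extract a rank-$2$ elementary abelian subgroup $A\leqslant T$ (legitimate for odd $t$, since only cyclic and generalized quaternion $p$-groups lack one), restrict the coprime action to $\Omega_1(Z(W))$, and use Maschke together with the cyclicity of finite subgroups of $\overline{\mathbb{F}}_w^{\times}$ to find a nontrivial $a\in A$ with $C_W(a)\neq 1$; commuting elements of coprime orders $t$ and $w$ then give $tw\in\omega(G)$. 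Your route is more elementary and self-contained, avoiding a black box at the cost of length, while the paper's citation buys brevity. One step worth stating explicitly in your write-up: the fixed-point space of $a$ on $\Omega_1(Z(W))\otimes\overline{\mathbb{F}}_w$ is the scalar extension of the fixed-point space over $\mathbb{F}_w$, so the nonzero fixed vector found over the algebraic closure does descend to the original module, as your argument implicitly requires.
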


\begin{proof}
Let $W$ be a Sylow $w$-subgroup of $K$ and $T$ be a Sylow $t$-subgroups of $N_G(W)$. By the Frattini argument, $G=KN_G(W)$, and so $T$
is also noncyclic. By the classification of Frobenius complements, $T$ cannot act on $W$  fixed-point-freely, therefore, $tw\in\omega(G)$.
\end{proof}

\begin{lemma} \label{l:frob} Suppose that $G$ is a finite group,  $K$ is a normal $w$-subgroup of $G$
for some prime $w$ and $G/K$ is a Frobenius group with kernel $F$ and cyclic complement $C$. If
$(|F|, w) = 1$ and $F$ is not contained in $KC_G(K)/K$, then $w|C| \in\omega(G)$.
\end{lemma}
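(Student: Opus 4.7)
My plan is a three-step attack: pass to a single chief factor inside $K$, realise $C$ as acting as a Frobenius complement on that factor, and then extract the required element by a norm computation.

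\textbf{Reduction.} Since $(|F|,w)=1$, $F$ acts coprimely on $K$, and the hypothesis $F\not\leqslant KC_G(K)/K$ forces $F$ to act nontrivially on some $G$-chief factor $V=K_i/K_{i-1}$ lying inside $K$. Passing to $G/K_{i-1}$ preserves every hypothesis of the lemma, and any preimage in $G$ of an element of order $w|C|$ in the quotient has order $w|C|\cdot w^{e}$ for some $e\geqslant 0$; since $\omega(G)$ is closed under taking divisors, it is enough to work in $G/K_{i-1}$. Hence I may assume that $V$ is a minimal normal subgroup of $G$ contained in $K$, elementary abelian of exponent $w$, with $F$ acting nontrivially; Maschke's theorem for coprime action then yields $V=[V,F]$ and $C_V(F)=0$.

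\textbf{Realising the Frobenius action on $V$.} Schur--Zassenhaus applied to the preimage $\tilde F$ of $F$ (using $(|F|,|K|)=1$) gives $\tilde F=K\rtimes F_0$ with $F_0\cong F$, and a Frattini argument produces $c\in N_G(F_0)$ whose image in $G/K$ generates $C$. The key structural observation is that $KC_G(V)/K$ is a normal subgroup of the Frobenius group $G/K=F\rtimes C$ that does not contain $F$ (as $F$ acts nontrivially on $V$), so it is forced to lie inside $F$; equivalently $C_G(V)\leqslant\tilde F$. Combined with $\tilde F\cap\tilde C=K$ (where $\tilde C$ is the preimage of $C$), this shows that the image $\bar c$ of $c$ in $G/C_G(V)\leqslant\Aut V$ has order exactly $|C|$, and that $F_0\langle c\rangle$ projects onto a Frobenius subgroup $\bar F_0\langle\bar c\rangle$ of $\Aut V$ with kernel the faithful image $\bar F_0$ of $F_0$ and cyclic complement $\langle\bar c\rangle$.

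\textbf{Freeness and the element.} Since $w\nmid|\bar F_0|$ and $C_V(\bar F_0)=0$, every $\bar F_0$-Clifford block of $V$ is of the form $\operatorname{Ind}_{\bar F_0}^{\bar F_0\langle\bar c\rangle}W$ for some nontrivial irreducible $\bar F_0$-module $W$; Mackey's formula, together with the Frobenius identity $\bar F_0\cap\langle\bar c\rangle=1$, shows that each such block restricts to $\langle\bar c\rangle$ as the regular $\mathbb F_w[\langle\bar c\rangle]$-module. Hence $V|_{\langle\bar c\rangle}$ is free, so the norm $N_{\bar c}=1+\bar c+\cdots+\bar c^{|C|-1}$ acts nonzero on $V$, and I can choose $v\in V$ with $N_{\bar c}(v)\neq 0$. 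A direct computation in $G$ then gives
\[
(vc)^{|C|}\;=\;N_{\bar c}(v)\cdot c^{|C|},
\]
where $N_{\bar c}(v)\in V$ has order $w$ and $c^{|C|}\in K\cap C_G(V)$ has $w$-power order and commutes with $N_{\bar c}(v)$. If $c^{|C|}=1$ then $|vc|=w|C|$ exactly; otherwise $|c|=|C|w^{a}$ with $a\geqslant 1$, so $w|C|$ already divides $|c|$. In either case, divisor-closure of $\omega(G)$ yields $w|C|\in\omega(G)$.

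\textbf{Main obstacle.} The delicate step is the middle one, where I must verify that $\bar F_0\langle\bar c\rangle$ still carries a genuine Frobenius structure on $V$; this is exactly where the hypothesis $F\not\leqslant KC_G(K)/K$ is used, via the classification of normal subgroups of a Frobenius group applied to $KC_G(V)/K\lhd G/K$. The case $w\mid|C|$, which might look problematic because $\mathbb F_w[C]$ is no longer semisimple, is absorbed into the final dichotomy: either a lift of a generator of $C$ has order $|C|$, in which case the norm computation delivers $w|C|$ on the nose, or every such lift has order at least $|C|w$, which already finishes.
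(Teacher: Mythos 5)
Your proof is correct. The paper does not actually prove this lemma --- it simply cites \cite[Lemma~1]{97Maz.t} --- so what you have produced is a self-contained argument for the cited result, and it follows the route one would expect that proof to take: reduce to a chief factor $V$ of $G$ inside $K$ on which $F$ acts nontrivially, observe that $G/C_G(V)$ is again a Frobenius group with cyclic complement generated by the image of a lift $c$ of a generator of $C$ (your appeal to the fact that a normal subgroup of a Frobenius group either contains the kernel or lies inside it is exactly the right tool, and it is indeed where the hypothesis $F\not\leqslant KC_G(K)/K$ enters), deduce that $V$ restricted to $\langle\bar c\rangle$ is free, and read off the order of $vc$ from the norm. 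Two small points deserve to be made explicit. First, you tacitly use that $K$ centralizes $V$, both when you let $F=\tilde F/K$ act on $V$ and when you assert that $c^{|C|}$ commutes with $N_{\bar c}(v)$; this is true but needs a sentence: $V\cap Z(K)$ is a nontrivial normal subgroup of $G$, so $V\leqslant Z(K)$ by minimality of $V$ (alternatively, a normal $w$-subgroup of a group acting irreducibly on a space of characteristic $w$ must act trivially). Second, in the Clifford--Mackey step the claim that every homogeneous component of $V|_{\bar F_0}$ attached to a nontrivial irreducible has inertia group exactly $\bar F_0$ rests on the semiregularity of $\langle\bar c\rangle$ on the nontrivial irreducibles of $\bar F_0$, which is obtained from Brauer's permutation lemma over a splitting field; so you should extend scalars to $\overline{\mathbb F}_w$ before running the argument and then descend the freeness (or merely the nonvanishing of $N_{\bar c}$ on $V$, which is all you use) back to $\mathbb F_w$. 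Neither point affects the substance: the argument is sound and, as far as one can tell, essentially reproduces the proof of the lemma the paper cites.
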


\begin{proof}
See \cite[Lemma 1]{97Maz.t}.
\end{proof}

\begin{lemma}\label{l:hh} Let $v$ and $r$ be distinct primes and let $G$ be a semidirect product of a finite $v$-group $U$ and a cyclic group $\langle g\rangle$ of order $r$.
Suppose that $[U,g]\neq 1$ and $G$ acts faithfully on a vector space $W$ of positive characteristic  $w\neq v$. Then either the natural semidirect product $W\rtimes G$ has an element of order $rw$,
or the following holds:
\begin{enumerate}
 \item $C_U(g)\neq 1$;
 \item $U$ is nonabelian;
 \item $v=2$ and $r$ is a Fermat prime.
\end{enumerate}
\end{lemma}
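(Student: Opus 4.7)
The plan is to prove the contrapositive: assume $W \rtimes G$ has no element of order $rw$, and deduce that all three conditions (i), (ii), (iii) hold. Since $G$ acts faithfully on $W$, the kernel of the action of any subgroup of $G$ on $W$ is trivial; in particular both $U$ (nontrivial because $[U,g]\neq 1$) and $\langle g\rangle$ act nontrivially on $W$.

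For (i), suppose $C_U(g)=1$. Then $g$ acts fixed-point-freely on the $v$-group $U$, so $G=U\rtimes\langle g\rangle$ is a Frobenius group with kernel $U$ and cyclic complement $\langle g\rangle$ of order $r$. Since $(|U|,w)=1$ and $U$ acts nontrivially on $W$, Lemma~\ref{l:frob} applied to $W\rtimes G$ with $K=W$ yields $rw\in \omega(W\rtimes G)$, a contradiction. For (ii), assume (i) holds and $U$ is abelian. Coprime action gives $U=C_U(g)\times [U,g]$, so $C_{[U,g]}(g)=1$, and $H=[U,g]\rtimes\langle g\rangle$ is a Frobenius group whose kernel $[U,g]$ acts nontrivially on $W$ by faithfulness. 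Lemma~\ref{l:frob} applied to $W\rtimes H$ gives $rw\in \omega(W\rtimes H)\subseteq \omega(W\rtimes G)$, again a contradiction. Hence (ii) holds.

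For (iii), we now have $U$ nonabelian and $C_U(g)\neq 1$, and must show that the failure of (iii) forces $rw\in \omega(W\rtimes G)$. A direct computation in $W\rtimes G$ (with $W$ written additively) gives $(yg)^r=(1+g+g^2+\dots+g^{r-1})y$ for $y\in W$. When $r\neq w$, the element $g$ acts semisimply on $W$ with $r$-th roots of unity as eigenvalues, so $(yg)^r$ is $r$ times the projection of $y$ onto $C_W(g)$; hence $rw\in \omega(W\rtimes G)$ is equivalent to $C_W(g)\neq 0$. When $r=w$, the same sum equals $(g-1)^{r-1}y$, so $rw\in \omega(W\rtimes G)$ is equivalent to $g$ admitting a Jordan block of size $r$ on $W$, i.e.\ to the minimal polynomial of $g$ having degree $r$. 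Both conditions are exactly what Hall--Higman's Theorem~B controls for the action of a prime-order element on a faithful module. After reducing to an irreducible $G$-constituent of $W$ and to an appropriate section of $G$ in which the relevant $O_{p'}$ is trivial, Hall--Higman either produces the required fixed vector or long Jordan block, or places us in its exceptional configuration, which forces $r=2^n+1$ to be a Fermat prime and $U$ to contain an extraspecial $2$-section---in particular $v=2$ and $r$ Fermat, giving (iii).

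The main obstacle is Step 3. Firstly, the Hall--Higman hypothesis $O_{p'}=1$ for the relevant prime $p$ is not automatic in $G$, so one must pass to a suitable section of $G$ while preserving the structural conditions ``$U$ nonabelian, $C_U(g)\neq 1$, $[U,g]\neq 1$'' that are needed to identify the exceptional configuration. Secondly, the cases $r=w$ (classical Hall--Higman) and $r\neq w$ (a semisimple eigenvalue analysis of the coprime automorphism $g$) require parallel but distinct local treatments that must nevertheless funnel into the same exceptional configuration, so that precisely the clause ``$v=2$ and $r$ a Fermat prime'' (and not some weaker structural assertion) emerges as the only obstruction to producing an element of order $rw$.
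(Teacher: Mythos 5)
The paper does not actually prove this lemma: the ``proof'' is a one-line citation of \cite[Lemma 3.6]{15Vas}, so there is no internal argument to compare against line by line. Your steps for (i) and (ii) are correct and complete: if $C_U(g)=1$ then $U\rtimes\langle g\rangle$ is a Frobenius group, and if $U$ is abelian then the coprime decomposition $U=C_U(g)\times[U,g]$ makes $[U,g]\rtimes\langle g\rangle$ a Frobenius group whose kernel acts nontrivially on $W$ by faithfulness; in both cases Lemma~\ref{l:frob} yields an element of order $rw$. The identity $(yg)^r=(1+g+\dots+g^{r-1})y$ and its two readings ($r$ times the projection onto $C_W(g)$ when $r\neq w$; $(g-1)^{r-1}y$ when $r=w$) are also right, so you have correctly translated ``no element of order $rw$'' into a statement about fixed points, respectively Jordan blocks, of $g$ on $W$.

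The gap is step (iii), and it is a real one. Hall--Higman Theorem~B is a statement about an element of order $p^n$ acting on a module \emph{in characteristic $p$} with $O_p=1$; it covers only the subcase $r=w$. (There it does work cleanly: $O_r(U\langle g\rangle)=1$ is automatic, because a normal Sylow $r$-subgroup would force $\langle g\rangle\trianglelefteq G$ and hence $[U,g]=1$; the $n=1$ exceptional configuration for odd $r$ is precisely ``$r$ Fermat and nonabelian Sylow $2$-subgroups'', which for $G=U\langle g\rangle$ means $v=2$ and $U$ nonabelian; and for $r=2$ the exception is vacuous. Note also that the hypothesis you need is $O_w(G)=1$, not $O_{w'}$, so no passage to a section is required for that purpose.) For $r\neq w$ the element $g$ acts coprimely on $W$ and Theorem~B says nothing; one needs the companion coprime fixed-point theorem (also of Hall--Higman type, treated for instance in Gorenstein's \emph{Finite Groups}), together with the reduction to a $g$-invariant extraspecial section of $U$ of order $v^{2m+1}$ whose faithful irreducible representations have degree $v^m$, where the eigenvalue count forces $r=v^m+1$ and hence $v=2$ with $r$ Fermat. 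You name this as ``the main obstacle'' and describe what should happen, but you neither carry out the reduction nor verify that the exceptional configuration of the coprime case coincides with that of the modular one, which is exactly the content of clause (iii). Until the case $r\neq w$ is pinned to a quotable theorem and the extraspecial analysis is done, step (iii) is an announcement rather than a proof.
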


\begin{proof}
See \cite[Lemma 3.6]{15Vas}.
\end{proof}

\begin{lemma}\label{l:hh1} Let $G$ be a finite group, let $N$ be a normal subgroup of $G$ and let $G/N$ be a simple classical group over a field of characteristic $v$.
Suppose that $G$ acts on a vector space $W$ of positive characteristic $w$,  $r$ is an odd prime dividing the order of some proper parabolic subgroup of $G/N$, the primes
$v$, $w$, and $r$ are distinct, and $v,r\not\in\pi(N)$. Then the natural semidirect product $W\rtimes G$ has an element of order $rw$.
\end{lemma}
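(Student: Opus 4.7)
The plan is to locate inside $G$ a subgroup of the form $\tilde V\rtimes\langle\tilde g\rangle$, with $\tilde V$ an abelian $v$-group, $\tilde g$ an element of order $r$, and $[\tilde V,\tilde g]\neq 1$, and then invoke Lemma \ref{l:hh}: since $\tilde V$ will be abelian, the exception (2) of that lemma ($U$ nonabelian) fails automatically, and the non-exceptional conclusion delivers the required element of order $rw$.

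First I would work inside the simple classical quotient $\bar G=G/N$. Let $P$ be a proper parabolic subgroup of $\bar G$ with $r\mid|P|$, which exists by hypothesis, and write $P=U\rtimes L$, with $U$ the unipotent radical (a $v$-group) and $L$ a Levi complement. Since $r\neq v$, we have $r\mid|L|$. The task at this stage is to exhibit an element $g\in L$ of order $r$ and a $g$-invariant abelian subgroup $V\leqslant U$ with $[V,g]\neq 1$. I would use that in every classical type the unipotent radical $U$ is nilpotent of class at most two, so $Z(U)$ is an elementary abelian $v$-group on which $L$ acts as a natural module for a product of classical groups; the kernel of this action is central in $L$, so any $r$-element of $L$ outside that kernel moves some subspace of $Z(U)$, and I would take $V\leqslant Z(U)$ to be a minimal $\langle g\rangle$-invariant subgroup with $[V,g]\neq 1$.

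Next I would lift $V\rtimes\langle g\rangle$ to $G$. Because $v,r\notin\pi(N)$, the order of $V\rtimes\langle g\rangle$ is coprime to $|N|$, so the Schur--Zassenhaus theorem applied to the preimage of $V\rtimes\langle g\rangle$ in $G$ (whose quotient by $N$ is solvable) produces a complement $H\cong V\rtimes\langle g\rangle$ to $N$; write $H=\tilde V\rtimes\langle\tilde g\rangle$. I would then apply Lemma \ref{l:hh} to $H$ acting on $W$ after passing to the faithful quotient $H/K_0$ (with $K_0$ the kernel of the action). In the generic case the image of $\tilde g$ still has order $r$, the image of $\tilde V$ is still abelian, and the image of $[\tilde V,\tilde g]$ is still nontrivial, so Lemma \ref{l:hh} (with exception (2) ruled out) yields an element of order $rw$ in $W\rtimes H\leqslant W\rtimes G$. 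Degenerate cases can be treated by direct computation: if, for instance, $\tilde g$ acts trivially on $W$, then for any nonzero $u\in W$ we have $(u,\tilde g)^r=(r u,1)\neq(0,1)$ because $r\neq w$, so $(u,\tilde g)$ already has order $rw$.

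I expect the main obstacle to be the first step, the uniform existence of $P$, $g$, and $V\leqslant Z(U)$ with $[V,g]\neq 1$. This is a question about the $L$-module structure of $Z(U)$ and the distribution of $r$-elements in $L$, well understood type-by-type; the delicate cases are those where every $r$-element of the Levi lies in the kernel of the action on $Z(U)$ (essentially, where the Sylow $r$-subgroup of $L$ is contained in $Z(L)$), which forces switching to a different parabolic of $\bar G$ or to a different abelian section of $U$ (for example $U/[U,U]$ in place of $Z(U)$). Once that geometric selection is in place, the Schur--Zassenhaus lifting and the application of Lemma \ref{l:hh} close the argument routinely.
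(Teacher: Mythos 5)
Your strategy differs from the paper's: you try to force the non-exceptional branch of Lemma \ref{l:hh} by replacing the unipotent radical $U$ with a $g$-invariant \emph{abelian} subgroup, whereas the paper applies Lemma \ref{l:hh} to the full, possibly nonabelian, $U\rtimes\langle g\rangle$ (with $[U,g]\neq 1$ supplied by \cite[13.2]{83GorLy}) and then disposes of the exceptional conclusion ($v=2$, $r$ a Fermat prime, $U$ nonabelian, $C_U(g)\neq 1$) case by case, switching to a maximal parabolic with abelian unipotent radical when $S\neq U_n(u)$ and arguing separately for unitary groups. The Schur--Zassenhaus lifting over $N$ and the reduction to a faithful action on $W$ are common to both arguments.

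The gap sits exactly where you predict ``delicate cases'' and defer them: the $g$-invariant abelian subgroup $V\leqslant U$ with $[V,g]\neq 1$ need not exist, and neither of your fallbacks repairs this. Concretely, take $S=U_3(u)$ and $r$ an odd prime dividing $u+1$. The only proper parabolic is the Borel subgroup; $U$ has order $u^3$ and class $2$ with $Z(U)=[U,U]$ of order $u$; a torus element $g$ of order $r$ acts on $Z(U)$ through the norm character $\lambda\mapsto\lambda^{u+1}$ and hence centralizes it; and every abelian subgroup of $U$ properly containing $Z(U)$ corresponds to an $\mathbb{F}_u$-line in $U/Z(U)\cong\mathbb{F}_{u^2}$, none of which is invariant under multiplication by a $\lambda$ of odd order dividing $u+1$. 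So no $g$-invariant abelian $V$ is moved by $g$; there is no other parabolic to switch to; and the section $U/[U,U]$ does not embed in $G$, so it does not act on $W$ and cannot be fed into Lemma \ref{l:hh}. The paper handles precisely this configuration by an entirely different mechanism: Sylow $r$-subgroups of $U_3(u)$ are noncyclic (since $|S|_r=(u+1)_r^2$ while $\gexp_r(S)=(u+1)_r$), so Lemma \ref{l:nonc} already yields $rw\in\omega(W\rtimes G)$. A secondary soft spot: your claims that unipotent radicals of parabolics of classical groups have class at most two (true only for maximal parabolics) and that the kernel of the Levi action on $Z(U)$ is central in the Levi are unproved and are exactly what the promised ``type-by-type'' analysis would have to establish; as written, the proposal does not prove the lemma.
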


\begin{proof}
Let $S=G/N$. We may assume that $C_G(W)\subseteq N$, since otherwise $C_G(W)$ has $S$ as a section and the lemma follows.
Let $P$ be the proper parabolic subgroup of $S$ contaning an element $g$ of order $r$ and let $U$ be the unipotent radical of $P$. By \cite[13.2]{83GorLy}, it follows that
$[U,g]\neq 1$. Since both $v$ and $r$ are coprime to $N$, there is a subgroup of $G$ isomorphic to $U\rtimes\langle g\rangle$ and this subgroup acts
on $W$ faithfully.  Applying Lemma \ref{l:hh}, we see that either $rw\in\omega(W\rtimes G)$, or $v=2$, $r$ is a Fermat prime, $U$ is nonabelian
and $C_U(g)\neq 1$.

Suppose that the latter case holds. If $S\neq U_n(u)$, then the conditions $v=2$ and $2r\in\omega(S)$ imply that $r$ divides the order of a maximal parabolic subgroup of $S$
with abelian unipotent radical (for example, the order of the group $P_1$ in notation of \cite{90KlLie}), and we can proceed as above with this parabolic subgroup instead of $P$.
Let $S=U_n(u)$. Writing $k=e(r,-u)$, we have that $k\leqslant n-2$ and $k$ divides $t-1$. Since $r$ is a Fermat prime, it follows that $k=1$ or $k$ is even, and
so $r$ divides $u^2-1$ or $u^k-1$. If $n\geqslant 4$, then $S$ includes a Frobenius group whose kernel is a $v$-group and complement has order $r$, and we again can apply Lemma~\ref{l:hh}. Let $n=3$. Then $r$ divides $u+1$. Since $|S|_r=(u+1)_r^2$ and  $\gexp_r(S)=(u+1)_r$, Sylow $r$-subgroups of $S$ are not cyclic,  and applying Lemma \ref{l:nonc} completes the proof.
\end{proof}

\begin{lemma}\label{l:l_2v}
Let $G=L_2(v)$, where $v>3$ is a prime. Suppose that $G$ acts on a vector space $W$ of positive characteristic  $w$ and $w\not\in\pi(G)$.
If $r$ and $s$ are two distinct odd primes from $\pi(G)$, then $\{r,s,w\}$ is not a coclique in $GK(W\rtimes G)$.
\end{lemma}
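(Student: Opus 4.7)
Set $G=L_2(v)$. The set $\omega(G)$ consists of the divisors of the three pairwise coprime numbers $v$, $(v-1)/2$, and $(v+1)/2$, so $rs\in\omega(G)$ unless $r$ and $s$ lie in different blocks. If $rs\in\omega(G)$ then $r$ and $s$ are already adjacent in $GK(W\rtimes G)$ and the conclusion holds, so we may assume $rs\notin\omega(G)$. After possibly swapping $r$ and $s$, one of the following two situations occurs: (a) $r$ divides $(v-1)/2$, or (b) $r=v$ and $s$ divides $(v+1)/2$.

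In case (a) the odd prime $r$ divides the order of the Borel subgroup, which is the unique proper parabolic of the simple classical group $G=L_2(v)$ of characteristic $v$. Applying Lemma~\ref{l:hh1} with $N=1$ then yields $rw\in\omega(W\rtimes G)$, as required.

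Case (b) is the substantive one. First, if $G$ acts trivially on $W$ then $W\rtimes G\cong W\times G$, and for any nonzero $u\in W$ and any $h\in G$ of order $v$ the element $(u,h)$ has order $vw$. Otherwise the action is faithful by the simplicity of $G$, and since $w\notin\pi(G)$, Maschke's theorem writes $W$ as a direct sum of irreducible $\mathbb{F}_w G$-modules with at least one nontrivial summand $V$. The crux is the claim that, for every nontrivial irreducible $\mathbb{F}_w G$-module $V$, either $V^{P}\neq 0$ for a Sylow $v$-subgroup $P$, or $V^{\langle g\rangle}\neq 0$ for any element $g\in G$ of order $s$. Granted the claim, $W$ contains a nonzero vector $u$ fixed by some element $x$ of order $v$ or $s$; then $(u,x)^{|x|}=(|x|\cdot u,1)$ is a nonzero element of $W$ and hence has order $w$, so $(u,x)$ has order $vw$ or $sw$.

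To prove the claim, lift $V$ to an ordinary irreducible character (this is legitimate since $w\nmid|G|$) and consult the character table of $L_2(v)$: the nontrivial ordinary irreducibles are the Steinberg (dimension $v$), the principal series (dimension $v+1$), the discrete series (dimension $v-1$), and two exceptional half-representations of dimension $(v\pm 1)/2$. The key combinatorial facts are that the two $G$-conjugacy classes of unipotent elements each contain $(v-1)/2$ of the powers of a fixed generator of $P$, and that the normalizer of the non-split torus $T^-$ induces inversion on $T^-$. Frobenius reciprocity then gives closed-form expressions for $\dim V^{P}$ and $\dim V^{\langle g\rangle}$ in each family, and a routine verification shows that at least one of the two is positive in every case (for instance, $\dim V^{P}=2$ and $\dim V^{\langle g\rangle}=(v+1)/s$ for the principal series, and $\dim V^{P}=0$ but $\dim V^{\langle g\rangle}\geq 1$ for the discrete series). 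The main technical obstacle I anticipate is the half-representation family, which splits differently in the two cases $v\equiv 1$ and $v\equiv 3\pmod 4$; the boundary case $s=(v+1)/2$ is resolved by the observation that the character of $T^-$ parametrizing the relevant representation must then be nontrivial on the full $\langle g\rangle=T^-$.
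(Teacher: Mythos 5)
Your proposal is correct and follows essentially the same route as the paper: the same three-way case split, a Hall--Higman-type argument for the case $r\mid (v-1)/2$ (the paper applies Lemma~\ref{l:hh} directly to a Borel subgroup, you invoke its corollary Lemma~\ref{l:hh1} with $N=1$), and a fixed-point count via the ordinary character table of $L_2(v)$ for the case $\{v,s\}$ with $s\mid (v+1)/2$. The one difference is that the paper averages each character over the full cyclic subgroup of order $(v+1)/2$, so every relevant sum collapses uniformly to $v+1$ or $(v+1)/2$, whereas your averaging over the subgroup of order $s$ forces exactly the extra boundary analysis (the half-representations for $v\equiv 3\pmod 4$ and the case $s=(v+1)/2$ for the discrete series) that you correctly identify and resolve.
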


\begin{proof}
Clearly, we may assume that $G$ acts on $W$ faithfully. If one of $r$ and $s$, say $r$, divides $(v-1)/2$, then applying Lemma  \ref{l:hh} to
a Borel subgroup of $G$ yields $wr\in GK(W\rtimes G)$. If both $r$ and $s$ divide $(v+1)/2$, then $rs\in \omega(G)$. Thus we are left with the case where one of $r$ and $s$ is equal to $v$, while
another divides $(v+1)/2$. We prove that either an element of order $v$ or an element of order $(v+1)/2$ has a fixed point in $W$. We may assume that $W$ is an irreducible $G$-module. The ordinary
character table of $L_2(v)$ is well known. We use the result and notation due to Jordan \cite{07Jor}.

Define $\epsilon$ to be $+1$ or $-1$ depending on whether $v-1$ is even or odd. Also let $\zeta$ be some fixed not-square in the field of order $v$. We denote by $\mu$ and $\nu$,
respectively, the conjugacy classes containing the images in $L_2(v)$ of the matrices $$\begin{pmatrix} 1&1\\0&1\end{pmatrix} \text{ and }\begin{pmatrix} 1&\zeta\\0&1\end{pmatrix}.$$
We denote by $S^b$ with $1\leqslant b\leqslant (v-\epsilon)/4$ the conjugacy class containing the image of the $b$-power of some fixed element of order $(v+1)/2$.
Then the values of nontrivial irreducible characters of $G$ on elements of orders dividing $v$ and $(v+1)/2$ are as follows:

\renewcommand{\arraystretch}{1.5}
$$
\begin{array}{|l|cccc|}
\hline
& 1& \mu&\nu&  S^b\\
\hline
\chi_0&v&0&0&-1\\
\chi_{\pm}&\frac{v+\epsilon}{2}&\frac{\epsilon\pm\sqrt{\epsilon v}}{2}&\frac{\epsilon\mp\sqrt{\epsilon v}}{2}&-\frac{(1-\epsilon)(-1)^b}{2}\\
\chi_u&v+1&1&1&0\\
\chi_t&v-1&-1&-1&-t^b-t^{-b}\\
\hline
\end{array}
$$
where $u$ and $t$ are the roots (except $\pm 1$) of the respective equations $u^{(v-1)/2}=1$ and $t^{(v+1)/2}=1$.

If $g\in G$ and $\chi$ is the character of the representation on $W$, then $C_W(g)\neq 0$ if and only if the sum \begin{equation}\label{e:sum}\sum_{h\in\langle g\rangle}\chi(h)\end{equation} is
positive. If $g\in\mu$, then the sum  is equal to $$\chi(1)+\left(\frac{v-1}{2}\right)\cdot(\chi(\mu)+\chi(\nu)),$$ and so it is clearly positive unless $\chi=\chi_{\pm}$ and $\epsilon=-1$, or
$\chi=\chi_t$.

Let $g\in S^1$. If $\epsilon=-1$, then \eqref{e:sum} is equal to $$\chi(1)+\chi(S^{(v+1)/4})+2\sum_{1\leqslant b\leqslant \frac{v-3}{4}}\chi(S^b).$$
Taking $\chi=\chi_{\pm}$, we have $$\frac{v-1}{2}-(-1)^{(v+1)/4}-2\sum_{1\leqslant b\leqslant \frac{v-3}{4}}(-1)^b=\frac{v-1}{2}-\sum_{1\leqslant b\leqslant \frac{v+1}{2}}(-1)^b=\frac{v+1}{2}.$$
Similarly, if $\chi=\chi_t$, then we have $$v-1-2t^{(v+1)/4}-2\sum_{1\leqslant b\leqslant \frac{v-3}{4}}(t^b+t^{-b})=v-1-2\sum_{1\leqslant b\leqslant
\frac{v-1}{2}}t^b=v+1.$$ If $\epsilon=+1$ and $\chi=\chi_t$, then \eqref{e:sum} is equal to
$$\chi_t(1)+2\sum_{1\leqslant b\leqslant \frac{v-1}{4}}\chi_t(S^b)=v-1-2\sum_{1\leqslant b\leqslant
\frac{v-1}{2}}t^b=v+1.$$
The proof is complete.
\end{proof}

We conclude the section with the lemma from \cite[Lemma~2.9]{19GrVasZv}. We give it with the proof because we will use variations of this proof further.

\begin{lemma}\label{l:nilp}
Let $G$ be a finite group and let $S\leqslant G/K\leqslant \Aut S$, where $K$ is a normal solvable subgroup of $G$ and $S$ is a nonabelian simple group.
Suppose that for every $r\in\pi(K)$, there is $a\in\omega(S)$ such that $\pi(a)\cap\pi(K)=\varnothing$
and $ar\not\in\omega(G)$. Then $K$ is nilpotent.
\end{lemma}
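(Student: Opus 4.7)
\emph{Proof Plan.} I would argue by contradiction. Assume $K$ is not nilpotent; the goal is to locate a prime $r\in\pi(K)$ and the corresponding $a=a(r)$ supplied by the hypothesis for which an element of order $ar$ can be built inside $G$, contradicting $ar\notin\omega(G)$.

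The first step is to convert non-nilpotency into a concrete action. Since $K$ is solvable with $F(K)\subsetneq K$ and $C_K(F(K))=F(K)$, the quotient $K/F(K)$ acts faithfully on $F(K)=\prod_{p}O_p(K)$. A short bookkeeping argument (splitting the action across the direct factors of $F(K)$) then produces distinct primes $r,w\in\pi(K)$, a $G$-chief factor $V\leqslant O_w(K)$ that is elementary abelian of exponent $w$, and an element $h\in K$ of order $r$ acting non-trivially on $V$. If no such pair with $r\neq w$ could be found, then every prime-order element of $K\setminus F(K)$ would centralize $O_{r'}(K)$ for its own prime $r$, a rigid configuration that I would treat as a separate, easier case.

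Next I apply the hypothesis at this $r$ to obtain $a\in\omega(S)$ with $\pi(a)\cap\pi(K)=\varnothing$ and $ar\notin\omega(G)$. Because $\gcd(a,|K|)=1$ and $K$ is solvable, the Schur--Zassenhaus theorem lets me lift an order-$a$ element of $S\leqslant G/K$ to $g\in G$ with $|g|=a$. The element $g$ then acts coprimely on $K$, so after replacing $h$ by a suitable $K$-conjugate I can arrange that $\langle g\rangle$ normalizes the Sylow $r$-subgroup $R$ of $K$ containing $h$ and also normalizes a $G$-invariant series refining $V$. The final step is to produce an element of order $ar$ in $G$: the cleanest route is to exhibit a non-trivial element of $C_R(g)$, since its product with $g$ has order $ar$. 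To force $C_R(g)\neq 1$, I would apply Lemma~\ref{l:hh} (the Hall--Higman-type tool) to the faithful action of an appropriate semidirect product $U\rtimes\langle g_0\rangle$ on $V$, where $U$ is a $v$-group inside $K$ acted on by $h$ and $g_0$ is a prime-order power of $g$; the lemma then yields either an element of order $pw$ (for some $p\mid a$) with a fixed vector in $V$, from which an $ar$-order element of $V\rtimes\langle g,h\rangle\subseteq G$ can be assembled, or one of the exceptional configurations ($v=2$, $r$ a Fermat prime, nonabelian $U$).

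The main obstacle will be this last step. Two difficulties stand out. First, $g$ and $h$ come from different parts of $G$ (the almost-simple quotient and the solvable radical), so arranging them in a single semidirect-product configuration meeting the hypotheses of Lemma~\ref{l:hh} requires careful module bookkeeping and possibly a more delicate choice of $V$ than the one produced in step~one. Second, the exceptional cases of Lemma~\ref{l:hh} will need to be excluded by auxiliary means—most likely by applying Lemma~\ref{l:nonc} or Lemma~\ref{l:frob} to other Sylow or Frobenius sections, or by replacing $g$ by a different order-$a$ element of $S$ whose action avoids the exceptional structure. It is this refined interaction between the action of $S$ on $K$ and the Sylow structure of $K$ that I expect to carry the technical weight of the proof.
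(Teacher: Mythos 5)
Your opening move (converting non-nilpotency of $K$ into a nontrivial action across two distinct primes of $\pi(K)$) is the right starting point, but the engine you propose for the contradiction cannot produce what is needed. The hypothesis has to be contradicted by exhibiting an element of order $ar$, where $a\in\omega(S)$ is in general \emph{composite}. Lemma~\ref{l:hh} is intrinsically a two-prime statement: applied to a prime-order power $g_0$ of $g$ it can only yield an element of order $pw$ for a single prime $p$ dividing $a$, or the alternative $C_U(g_0)\neq 1$ (giving an element of order $pr$), and neither of these can be ``assembled'' into an element of order $ar$ --- you do not explain that assembly, and I do not see how it could be done. The tool that actually produces an element of order $r\cdot a$ with $a$ composite is Mazurov's Lemma~\ref{l:frob}, which you relegate to patching up ``exceptional cases''; the paper's proof is built entirely around it. Concretely: set $F=F(K)$, take a minimal normal subgroup $\widetilde T$ of $\widetilde G=G/F$ inside $K/F$ (an elementary abelian $t$-group), and find $r\in\pi(F)\setminus\{t\}$ with $\widetilde C_r\cap\widetilde T=1$, where $\widetilde C_r$ is the image of the centralizer of the Sylow $r$-subgroup of $F$; if no such $r$ existed, the preimage $T$ of $\widetilde T$ would be a normal nilpotent subgroup properly containing $F$, which disposes of your vaguely described ``rigid configuration'' case. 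The hypothesis is then invoked at \emph{that} $r$ --- the prime of the normal subgroup being acted on --- not at the prime of the acting element, as you do. The group $[\widetilde T,x]\rtimes\langle x\rangle$, with $x$ of order $a$, is a Frobenius group with cyclic complement of order $a$ whose kernel is a $t$-group acting without centralizing the Sylow $r$-subgroup of $F$, and Lemma~\ref{l:frob} delivers $ra\in\omega(G)$.

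A second gap: you never establish that the order-$a$ element acts nontrivially on the section you have chosen; Schur--Zassenhaus lifting and ``module bookkeeping'' do not give this. In the paper this is exactly where the hypothesis is used a second time, now at the prime $t$: if $C_{\widetilde G}(\widetilde T)$ were not contained in $\widetilde K$ it would cover $S$, forcing $ta\in\omega(G)$ for every $a\in\omega_{t'}(S)$ and contradicting the hypothesis applied to $t\in\pi(K)$; hence $C_{\widetilde G}(\widetilde T)\leqslant\widetilde K$ and every element of order $a$ moves $\widetilde T$, so that $[\widetilde T,x]\neq 1$. In short, your proposal correctly senses where the technical weight lies, but it invokes the hypothesis at the wrong prime and leans on the wrong key lemma; as written the final step would not close.
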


\begin{proof}
Otherwise, the Fitting subgroup $F$ of $K$ is a proper subgroup of $K$.
Define $\widetilde G=G/F$ and $\widetilde K=K/F$. Let $\widetilde T$ be a minimal normal subgroup of
$\widetilde G$ contained in $\widetilde K$ and let $T$ be its preimage in $G$. It is clear that $\widetilde T$ is an elementary abelian $t$-group
for some prime $t$. Given $r\in \pi(F)\setminus \{t\}$, denote the Sylow $r$-subgroup of $F$ by $R$,
its centralizer in $G$ by $C_r$ and the image of $C_r$ in $\widetilde G$ by $\widetilde C_r$. Since $\widetilde C_r$ is normal in $\widetilde G$,
it follows that either $\widetilde T\leqslant \widetilde C_r $ or $\widetilde C_r \cap \widetilde T = 1$.
If $\widetilde T\leqslant \widetilde C_r$ for all $r\in\pi(F)\setminus \{t\}$, then $T$ is a normal nilpotent subgroup of $K$, which contradict the choice of $\widetilde T$.
Thus there is $r\in\pi(F)\setminus\{t\}$ such that $\widetilde C_r \cap \widetilde T = 1$.

If $C_{\widetilde G}(\widetilde T)$ is not contained in $\widetilde K$, then it has a section isomorphic to $S$. In this case $ta\in\omega(G)$ for every $a\in\omega_{t'}(S)$, contrary
to the hypothesis. Thus  $C_{\widetilde G}(\widetilde T)\leqslant \widetilde K$.

Choose $a\in\omega_{r'}(S)$ such that $\pi(a)\cap\pi(K)=\varnothing$ and $ra\not\in\omega(G)$, and let $x\in \widetilde G$ be an element of order $a$. Then
$x\not\in C_{\widetilde G}(\widetilde T)$, therefore, $[\widetilde T,x]\neq 1$ and so $[\widetilde T,x]\rtimes\langle x\rangle$ is a Frobenius group with complement $\langle x\rangle$.
Since $\widetilde C_r \cap \widetilde T = 1$, we can apply Lemma \ref{l:frob} and conclude that $ra\in\omega(G)$, contrary to the choice of $a$.
\end{proof}

\section{Reduction}

In this section, we apply the known facts concerning Theorem~\ref{t:main} to reduce the general situation to a special case. Let $L$ be a finite nonabelian simple group and let
$G$ be a nonsolvable finite group with $\omega(G)=\omega(L)$. Since the spectrum of a group determines its prime graph, it follows that $GK(G)=GK(L)$. In particular, if $GK(L)$ is disconnected, then
so is $GK(G)$. In this case, $G$ satisfies the hypothesis of the Gruenberg--Kegel theorem, hence the solvable radical $K$ of $G$ must be nilpotent (see \cite[Theorem~A and
Lemma~3]{81Wil}). Thus, proving Theorem~\ref{t:main} we may assume that $GK(L)$ is connected.

If $L$ is sporadic, alternating, or exceptional group of Lie type, then Lemma~\ref{l:isospectral} says that $L$ is either almost recognizable, or one of the groups $J_2$, $A_6$, $A_{10}$, and
${}^3D_4(2)$. In the former case the solvable radical $K$ is trivial, while in the latter case, if we exclude $A_{10}$, then $K$ is nilpotent because $GK(L)$ is disconnected \cite[Tables 1a-1c]{02Maz.t}. Thus, we
may suppose that $L$ is a classical group.

Let $p$ and $q$ be the characteristic and order of the base field of $L$, respectively.
If $L$ is one of the following groups: $L_n^\tau(q)$, where $n\leq3$, $S_{2n}(q), O_{2n+1}(q)$, where $n=2,4$, $U_4(2)$, $U_5(2)$, $S_6(2)$, and $O_{8}^+(2)$, then $GK(L)$ is
disconnected \cite{02Maz.t}. Together with Lemma~\ref{l:isospectral}, this shows that we may assume that $q$ is odd, and $n\geq4$ for $L=L_n^\tau(q)$, $n\geq3$ for $L\in\{S_{2n}(q), O_{2n+1}(q)\}$,
and $n\geqslant4$ for $L=O_{2n}^\tau(q)$. Furthermore, applying information on the sizes of maximal cocliques and $2$-cocliques from \cite{05VasVd.t,11VasVd.t}, we obtain that $t(L)\geqslant3$ and
$t(2,L)\geqslant2$, so the conclusion of Lemma~\ref{l:str} holds for $G$. In particular, $G$ has a normal series
\begin{equation} \label{e:ns}
1\leqslant K<H\leqslant G,
\end{equation}
where $K$ is the solvable radical of $G$, $H/K$ is isomorphic to a nonabelian simple group $S$, and
$G/K$ is isomorphic to some subgroup of $\Aut(S)$.

The group $S$ is neither an alternating group by \cite[Theorem~1]{09VasGrMaz.t} and \cite[Theorem~1]{11VasGrSt.t}, nor a sporadic group by \cite[Theorem~2]{09VasGrMaz.t} and
\cite[Theorem~2]{11VasGrSt.t}. If $S$ a group of Lie type over a field of characteristic $p$, then $S\simeq L$ due to \cite[Theorem~3]{11VasGrSt.t} and \cite[Theorem~2]{15VasGr1}.
Then \cite[Corollary~1.1]{15Gr} yields that either $K=1$ or $L=L_4^\tau(q)$.  In the latter case, $K$ must be a $p$-group in view of \cite[Lemma~11]{08Zav.t}. Thus, we may assume that $S$ is
a simple group of Lie type over the field of order $u$ and characteristic $v\neq p$.

Finally, applying Lemma~\ref{l:nilp}, we derive the following assertion.

\begin{lemma} \label{l:a} Let $q$ be odd and let $L$ be one of the simple groups $L_n^\tau(q)$, where $n\geqslant 4$, $S_{2n}(q)$, where $n\geqslant 3$, $O_{2n+1}(q)$, where $n\geqslant 3$, or
$O_{2n}^\tau(q)$, where $n\geqslant 4$.
Suppose that $G$ is a finite group such that $\omega(G)=\omega(L)$, and $K$ and $S$ are as in \eqref{e:ns}. Then either $K$ is nilpotent, or one of the following holds:
\begin{enumerate}
 \item $L=O_{2n}^\tau(q)$, where $n$ is odd, $q\equiv \tau\pmod 8$, and $R_n(\tau q)\cap \pi(S)\subseteq \pi(K)$;
 \item $L=L_n^\tau(q)$, where $1<(n)_2<(q-\tau)_2$, and $R_{n-1}(\tau q)\cap\pi(S)\subseteq \pi(K)$;
 \item $L=L_n^\tau(q)$, where $(n)_2>(q-\tau)_2$ or $(n)_2=(q-\tau)_2=2$, and $R_{n}(\tau q)\cap\pi(S)\subseteq \pi(K)$.
 \end{enumerate}
\end{lemma}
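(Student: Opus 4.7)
The plan is to verify the hypothesis of Lemma~\ref{l:nilp} whenever none of the three listed configurations holds; Lemma~\ref{l:nilp} then immediately yields nilpotency of $K$. Concretely, for each $r \in \pi(K)$ I need to exhibit an order $a \in \omega(S)$ with $\pi(a) \cap \pi(K) = \varnothing$ and $ar \notin \omega(L)$. The available pool of witnesses is $\omega(S)$, which, since $S$ is a simple group of Lie type over a field of order $u$ and characteristic $v \neq p$, contains the primitive prime divisors $r_m(u)$ for every $m$ allowed by Lemma~\ref{l:bz}.

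To produce the witness, I would consult the adjacency criterion of \cite{05VasVd.t,11VasVd.t} to locate a prime of $GK(L)$ non-adjacent to $r$ and appearing in $\pi(S)$. For the classical groups $L$ under consideration, the reliable source of such non-adjacent primes is the top-rank primitive set: $R_{2n}(q)$ for $L \in \{S_{2n}(q),O_{2n+1}(q)\}$; $R_{2n-2}(q)$ or $R_n(\tau q)$ for $L = O_{2n}^\tau(q)$; and $R_n(\tau q)$ or $R_{n-1}(\tau q)$ for $L = L_n^\tau(q)$, with the choice between the last two governed by the $2$-adic relation between $n$ and $q-\tau$. Lemmas~\ref{l:nonc}, \ref{l:frob}, \ref{l:hh}, \ref{l:hh1} then translate ``non-adjacency in $GK(L)$'' into genuine element orders in $G$, while Lemmas~\ref{l:v} and~\ref{l:l_2v} handle the degenerate low-rank configurations of $S$. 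In every arithmetic regime outside (i)--(iii) this machinery produces a primitive prime $r_m(u) \in \pi(S) \setminus \pi(K)$ which, together with any coprime auxiliary factor, forms an $a$ satisfying $ar \notin \omega(L)$.

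The hard part is the three borderline configurations themselves. In each of (i), (ii), (iii) the only sufficiently isolated clique of $GK(L)$ collapses to a single primitive family -- $R_n(\tau q)$ for (i) and (iii), $R_{n-1}(\tau q)$ for (ii) -- and if every prime of this family that actually lies in $\pi(S)$ is already in $\pi(K)$, then no candidate witness $a$ can simultaneously meet the isolated clique and avoid $\pi(K)$. The main technical challenge will be to rule out \emph{alternative} cocliques of $GK(L)$ in these regimes, uniformly over all admissible Lie-type choices of $S$; this requires the spectrum descriptions of Lemmas~\ref{l:spec_l4}--\ref{l:spec_g2}, the exponent bounds of Lemma~\ref{l:exp_e}, and a careful reading of the adjacency criterion of \cite{05VasVd.t,11VasVd.t} in the precise $2$-adic and congruence windows defining (i), (ii), (iii).
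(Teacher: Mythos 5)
Your top-level strategy---verify the hypothesis of Lemma~\ref{l:nilp} whenever none of (i)--(iii) holds---is the right one, and it is exactly what the paper does; but note that the paper only executes it for $L=L_4^\tau(q)$ and disposes of every other $L$ in the statement by citing \cite[Lemma~4.1]{19GrVasZv}. You are therefore proposing to reprove that cited result from scratch, yet the proposal never actually does the work: the entire content of the lemma lives in the case analysis you defer (``in every arithmetic regime outside (i)--(iii) this machinery produces a primitive prime\ldots'', ``the main technical challenge will be\ldots''). Showing that for each of the symplectic, odd-dimensional orthogonal, and non-exceptional even-dimensional orthogonal and linear/unitary configurations there are enough independent witness families is precisely the hard part, and nothing in the proposal carries it out.

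Beyond incompleteness, the plan contains a structural error that would derail its execution: you have the roles of the two primitive families backwards. The sets named in the exceptional conclusions---$R_n(\tau q)$ in (i) and (iii), $R_{n-1}(\tau q)$ in (ii)---are \emph{not} the isolated cliques of $GK(L)$; their primes are adjacent to $2$ (but not to $p$). The families non-adjacent to $2$ are $R_{2n-2}(\tau q)$ in case (i) and $R_n(\tau q)$, $R_{n-1}(\tau q)$ in cases (ii), (iii) respectively; by Lemma~\ref{l:str}(iii) their primes automatically lie in $\pi(S)\setminus(\pi(K)\cup\pi(G/H))$, so $a=r_z(\tau q)$ is a valid witness for every $r\in\pi(K)$ coprime to the corresponding maximal order $m_z$. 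The only problematic primes are the $r\in\pi(K)$ dividing $m_z$, for which one must fall back on the secondary family $R_y(\tau q)$; since its members are adjacent to $2$ they may lie in $\pi(K)$, and the exceptional conclusion $R_y(\tau q)\cap\pi(S)\subseteq\pi(K)$ records exactly the failure of that fallback. This is the mechanism of the paper's $L_4^\tau(q)$ argument, and your description inverts it. Two further points: the witnesses must be primitive primes with respect to $q$ (so that non-adjacency in $GK(L)$ and membership in $\pi(S)$ can both be read off), not the primes $r_m(u)$ attached to the field of $S$, which give no control over adjacency in $GK(L)$; and Lemmas~\ref{l:hh}, \ref{l:hh1}, \ref{l:l_2v} play no role here, since the Frobenius argument is already packaged inside Lemma~\ref{l:nilp}---they belong to the later analysis in Section~5.
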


\begin{proof} If $L\neq L_4^\tau(q)$, then this follows from~\cite[Lemma~4.1]{19GrVasZv}.

Let $L=L_4^\tau(q)$.  We show that either for every $r\in\pi(K)$, there is $a\in\omega(S)$ satisfying $\pi(a)\cap\pi(K)=\varnothing$
and $ar\not\in\omega(G)$, in which case $K$ is nilpotent by Lemma~\ref{l:nilp}, or one of~(ii) and (iii) holds.

If $(q-\tau)_2=4$, then the elements of $R_4(\tau q)\cup R_{3}(\tau q)$ are not
adjacent to $2$ in $GK(G)$, so we can take $a=r_4(\tau q)$ if $r$ is coprime to $m_4=(q^2+1)(q+\tau)/(4,q-\tau)$, and $a=r_{3}(\tau q)$ otherwise.

Let $(q-\tau)_2>4$. If $r$ is coprime to $m_4$, then $a=r_4(\tau q)$. If $r$ divides  $m_4$ and there is $s\in (\pi(S)\cap R_{3}(\tau q))\setminus \pi(K)$, then $a=s$.
The case $(q-\tau)_2<4$ is similar with $m_4$, $r_4(\tau q)$ and $R_3(\tau q)$ replacing by  $m_3=(q^3-\tau)/(4,q-\tau)$, $r_3(\tau q)$ and $R_4(\tau q)$ respectively.  Therefore, (ii) or (iii)
holds, and the proof is complete.
\end{proof}

\section{General Case}

Let $G$ and $L$ be as in Theorem~\ref{t:main} and suppose that the solvable radical $K$ of $G$ is not nilpotent. According to the previous section, we may assume that
$L$ is a classical group over a field of odd characteristic $p$ and order $q$ with connected prime graph, $G$ has a normal series
\begin{equation*}
1\leqslant K<H\leqslant G,
\end{equation*}
where $S=H/K$ is a simple group of Lie type over a field of characteristic $v\neq p$ and order $u$, and $G/K$ is a subgroup of $\Aut(S)$. Moreover, we may assume that one
of the assertions (i)--(iii) of the conclusion of Lemma~\ref{l:a} holds. Before we proceed with the proof of Theorem~\ref{t:main}, we introduce some notation which allow us to deal with the cases described in
(i)--(iii) of Lemma~\ref{l:a} simultaneously.

We define $z$ to be the unique positive integer such that $R_{z}(\tau q)\subseteq\pi(L)$ and each $r\in R_{z}(\tau q)$ is not adjacent to $2$ in $GK(L)$.  We also define
$y$ to be  the unique
positive integer such that $R_{y}(\tau q)\subseteq\pi(L)$ and each $r\in R_{y}(\tau q)$ is adjacent to $2$ but not to $p$. Both $z$ and $y$ are well-defined  for
all groups $L$ in (i)--(iii). Indeed, $y=n$,  $z=2n-2$ in (i); $y=n-1$, $z=n$ in (ii); and $y=n$, $z=n-1$ in~(iii). Moreover, the last containments of  all three assertions  in
Lemma~\ref{l:a} can be written uniformly as \begin{equation}\label{eq:ry} R_y(\tau q)\cap \pi(S)\subseteq \pi(K).\end{equation} Observe that $r_z(\tau q)$ is not adjacent to both $2$ and $p$, and
hence $\{p, r_z(\tau q), r_y(\tau q)\}$ is a coclique in $GK(L)$. Also observe that $z, y\geqslant 3$, and so if $r\in R_{z}(\tau q) \cup R_y(\tau q)$, then $r\geqslant 5$.

It is not hard to check that every number in $\omega(L)$ that is a multiple of $r_{z}(\tau q)$ or $r_{y}(\tau q)$
has to divide the only element of $\mu(L)$ which we denote by $m_z$ or $m_y$ respectively. Namely, $$m_z=(q^{n-1}+1)(q+\tau)/4, \quad m_y=(q^n-\tau)/4$$ in (i);
$$\{m_z,m_y\}=\left\{\frac{q^n-1}{(q-\tau)(n,q-\tau)}, \frac{q^{n-1}-\tau}{(n,q-\tau)}\right\}$$ in (ii) and (iii). The numbers $m_z$ and $m_y$ are coprime, so $r_z(\tau q)$ and $r_y(\tau q)$
have no common neighbours in $GK(L)$. Also we note that $m_y$ is even.

The definition of the number $z$ and Lemma \ref{l:str}(ii) imply that $R_z(\tau q)\cap (\pi(K)\cup\pi(G/H))=\varnothing$ and so $k_z(\tau q)\in\omega(S)$. Hence there is a number $k(S)\in\omega(S)$
such that $k_z(\tau q)$ divides $k(S)$ and every $r\in\pi(k(S))$ is not adjacent to 2 in $GK(S)$. Using, for example, \cite[Section 4]{05VasVd.t}, one can see that
this number is uniquely determined and of the form $k_j(u)$ for some positive integer $j$ provided that $S\neq L_2(u)$ and $S$ is not a Suzuki or Ree group.
If $k(S)\neq k_j(u)$, then $k(S)=v$ when $S=L_2(u)$, and $k(S)=f_+(u)$ or $f_-(u)$, where $f_{\pm}(u)$ is as follows,  when $S$ is a Suzuki or Ree group.

\renewcommand{\arraystretch}{1.5}
$$\begin{array}{|l@{\hspace{10mm}}l|}
\hline
S&f_{\pm}(u)\\
\hline
^2B_2(u)& u\pm\sqrt{2u}+1\\
^2G_2(u)&u\pm\sqrt{3u}+1\\
^2F_2(u)& u^2\pm\sqrt{2u^3}+u\pm \sqrt{2u}+1\\
\hline
\end{array}$$

\begin{lemma} \label{l:field} $R_{y}(\tau q)\subseteq\pi(K)$.
\end{lemma}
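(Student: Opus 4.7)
The plan is to argue by contradiction. Fix a prime $r\in R_y(\tau q)$ and suppose $r\notin\pi(K)$. Combined with~\eqref{eq:ry}, this forces $r\notin\pi(S)$, so $r$ must divide $|G/H|$ and hence lies in $\pi(\Out S)$. Applying Lemma~\ref{l:str}(ii) to the coclique $\{p,r_z(\tau q),r\}$ of $GK(G)$ and recalling that $r_z(\tau q)$ divides neither $|K|$ nor $|G/H|$, at most one of $p$ and $r$ divides $|K|\cdot|G/H|$; since $r$ does, $p$ cannot, so $p\in\pi(S)$.

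Next, since $r\geqslant 5$ and $r\notin\pi(S)$, I would invoke the standard description of $|\Out S|$ for a simple group $S$ of Lie type over $\mathbb{F}_u$, $u=v^f$: the diagonal and graph contributions are built from primes already lying in $\pi(S)$, hence $r$ must divide the order of the field-automorphism group. Consequently there is $g\in G$ whose image in $\Aut S$ is a field automorphism $\sigma$ of order $r$, and the centralizer $C_S(\sigma)$ is a simple group $S_0$ of the same type as $S$ but defined over the subfield $\mathbb{F}_{u^{1/r}}$.

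The contradiction will come from producing an element of order $pr$ in $G$, which is forbidden because $p$ and $r$ are non-adjacent in $GK(L)=GK(G)$. Granting, for the moment, that $p\in\pi(S_0)$, one picks $\bar x\in C_S(\sigma)$ of order $p$ and lifts it to $x\in H$; the images of $g$ and $x$ in $G/K$ then commute and have coprime orders $r$ and $p$. Setting $M=K\langle g,x\rangle$, the quotient $M/K$ is cyclic of order $pr$, and since $\gcd(|K|,pr)=1$, Schur--Zassenhaus supplies a complement to $K$ in $M$ containing a genuine element of order $pr$, as required.

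The principal obstacle is verifying $p\in\pi(S_0)$. Writing $m=e(p,u)$, a direct computation shows that $e(p,u^{1/r})\in\{m,rm\}$, so the inclusion $p\in\pi(S_0)$ can fail only when $r\mid m$ and $rm$ exceeds the Lie rank of $S_0$. In that exceptional regime I would instead search for an auxiliary prime $p'\in\pi(C_S(\sigma))$ that is non-adjacent to $r$ in $GK(L)$—using the detailed structure of $S_0$ together with Lemma~\ref{l:v} to rule out obstructions coming from cyclic Sylow subgroups—and repeat the lifting argument with $p'$ in place of $p$. The groups $L_2(u)$ and the Suzuki--Ree families, where $k(S)$ is not of the form $k_j(u)$ and the structure of $C_S(\sigma)$ is more rigid, would presumably require a separate short treatment guided by the table of $f_\pm(u)$ preceding the lemma.
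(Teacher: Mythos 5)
Your setup coincides with the paper's: assume $r\in R_y(\tau q)\setminus\pi(K)$, deduce from \eqref{eq:ry} that $r\in\pi(G/H)\setminus\pi(S)$, use $r\geqslant 5$ to get a field automorphism $\varphi$ of order $r$ with $u=u_0^r$ and $C_S(\varphi)$ of the same type over the subfield, and then contradict non-adjacency of $r$ by exhibiting $rs\in\omega(G)$ for a suitable $s\in\pi(C_S(\varphi))$. The divergence, and the gap, is in how the ``suitable'' prime $s$ is produced. Your first candidate is $s=p$, and you correctly identify that $p\in\pi(C_S(\varphi))$ can fail (exactly when $e(p,u_0)=r\cdot e(p,u)$ outruns the rank); but your fallback --- ``search for an auxiliary prime $p'\in\pi(C_S(\varphi))$ non-adjacent to $r$, using the detailed structure of $S_0$ together with Lemma~\ref{l:v}'' --- is not an argument. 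Lemma~\ref{l:v} concerns non-cyclicity of Sylow subgroups and gives you no handle on which primes of the subfield group are non-adjacent to $r$ in $GK(L)$; nothing in the proposal guarantees such a $p'$ exists, let alone identifies it. The Suzuki--Ree and $L_2(u)$ cases are likewise deferred rather than treated. So the heart of the proof is missing.

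The paper closes this gap by inverting the quantifier: since every multiple of $r$ in $\omega(L)$ divides $m_y$, \emph{every} prime of $\pi(C_S(\varphi))$ would have to divide $m_y$. It then exhibits a prime of $\pi(C_S(\varphi))$ dividing $m_z$, which is coprime to $m_y$: writing $k(S)=k_j(u)$ (the element of $\omega(S)$ divisible by $k_z(\tau q)$ whose prime divisors are non-adjacent to $2$), one takes $t\in R_j(u_0)$; when $(j,r)=1$, Lemma~\ref{l:pras} gives $R_j(u_0)\subseteq R_j(u)$, so $t$ divides $k(S)$ and hence $m_z$, while $t\in\pi(C_S(\varphi))$ forces $t\mid m_y$ --- a contradiction. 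The residual case $r\mid j$ is eliminated by showing directly that then $r\in\pi(S)$ (via \cite[Table~7]{05VasVd.t} for exceptional types and the bound $(j)_{2'}\leqslant m$ for classical ones), and the Suzuki--Ree case is handled by the same $m_y$-versus-$m_z$ clash using $f_\pm(u_0)$. If you want to salvage your version, you should replace the ``search'' by this concrete choice of $t$ and supply the $r\mid j$ analysis; as written, the proposal proves the lemma only in the favourable case $p\in\pi(C_S(\varphi))$.
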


\begin{proof} Suppose that $r\in R_{y}(\tau q)\setminus\pi(K)$. Then \eqref{eq:ry} yields $r\in\pi(G/H)\setminus\pi(H)$. Since $S$ is a simple group of Lie type, $r\not\in\pi(S)$ and $r\geqslant 5$,
there is a field automorphism $\varphi$ of $S$ of order $r$ lying in $G/K$ and $u=u_0^r$. The centralizer $C$ of $\varphi$ in $S$ is the group of the same Lie type as $S$ over the subfield of the
base field of order $u_0$. Since $r\cdot\pi(C)\subseteq\omega(G)$, it follows that every $s\in\omega(C)$ must divide $m_y$.

Recall that $k_z(\tau q)$ divides $k(S)$. It is clear that $k(S)\neq v$ since $v\in\pi(C)$. If $S$ is a Suzuki or Ree group and $k(S)=f_+(u)$ or $f_-(u)$, then either $f_+(u_0)$ or $f_-(u_0)$ divides
$k(S)$ and so divides $m_z$. On the other hand, both $f_+(u_0)$ and $f_-(u_0)$ lie in $\omega(C)$.

Suppose that $k(S)=k_j(u)$ and take $t\in R_j(u_0)$. It is clear that $t\in\pi(C)$. If $(j,r)=1$, then Lemma~\ref{l:pras}
yields $R_j(u_0)\subseteq R_j(u)$, so $t$ divides $k_j(u)$, and hence it divides $m_z$. This is a contradiction because $(m_y,m_z)=1$. Assume that $r$ divides $j$.
If $S$ is an exceptional group of Lie type, then using  \cite[Table~7]{05VasVd.t} and the
condition $r\geqslant 5$, we conclude that $r=5$ and $S=E_8(u)$, or $r=7$ and $S=E_7(u)$. In either case, $r\in\pi(S)$, a contradiction.
If $S$ is one of the groups $L_m^\varepsilon(u)$, $S_{2m}(u)$, $O_{2m+1}(u)$, $O_{2m}^\varepsilon(u)$, then $(j)_{2'}\leqslant m$,
and so $r\leqslant m$. Since $r$ divides $u(u^{r-1}-1)$, it follows that $r\in\pi(S)$, and the proof is complete.
\end{proof}

\begin{lemma}\label{l:nil1} Let $F$ be the Fitting subgroup of $K$.
\begin{enumerate}
\item  $R_{y}(\tau q)\subseteq\pi(F)\setminus\pi(H/F)$.
\item If $s\in\pi(G)$ and $(s, m_y)=1$, then Sylow $s$-subgroups of $G$ are cyclic.
\end{enumerate}
 \end{lemma}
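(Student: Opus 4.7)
The plan is to establish (i) first by adapting the Frobenius-type argument of Lemma \ref{l:nilp} together with the coclique information coming from $R_y(\tau q)$, and then to deduce (ii) from (i) via Lemma \ref{l:nonc}.

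For (i), I would fix $r \in R_y(\tau q)$. By Lemma \ref{l:field} we have $r \in \pi(K)$, and $y \geqslant 3$ gives $r \geqslant 5$. Lemma \ref{l:str}(ii) applied to the 3-coclique $\{p, r_z(\tau q), r\}$ in $GK(G)$, combined with $r \in \pi(K)$, forces $p, r_z \notin \pi(K) \cup \pi(G/H)$, so that $p, r_z \in \pi(S)$; moreover every multiple of $r$ in $\omega(G)$ divides $m_y$, and $pr, pr_z, r_z r \notin \omega(G)$. Suppose for contradiction that $r$ is not in $\pi(F) \setminus \pi(H/F)$, so either $r \in \pi(K/F)$ or $r \in \pi(S)$. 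Passing to $\widetilde G = G/F$ and $\widetilde K = K/F$, I would pick a minimal normal $\widetilde T \leqslant \widetilde K$ of $\widetilde G$ of exponent $t$ and, as in Lemma \ref{l:nilp}, choose $w \in \pi(F) \setminus \{t\}$ with $\widetilde C_w \cap \widetilde T = 1$, verifying $C_{\widetilde G}(\widetilde T) \leqslant \widetilde K$ (otherwise a section isomorphic to $S$ would centralize $\widetilde T$, yielding an element of order $tp$ or $tr_z$ in $G$ incompatible with $t \in \pi(K)$ and $p, r_z \notin \pi(K)$). Lifting $\widetilde x \in \widetilde G$ whose image in $\widetilde G / \widetilde K$ is a $p$- or $r_z$-element of $S$, the pair $[\widetilde T, \widetilde x] \rtimes \langle \widetilde x \rangle$ is a Frobenius group, and applying Lemma \ref{l:frob} with the normal $w$-subgroup $O_w(K)$ produces $pw$ or $r_z w \in \omega(G)$; choosing $w \in \pi(F)$ so that the resulting product does not divide any element of $\mu(L)$ delivers the contradiction.

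For (ii), I would fix any $r \in R_y(\tau q)$; by (i), $R := O_r(K)$ is the Sylow $r$-subgroup of $K$ and normal in $G$. Given $s \in \pi(G)$ with $(s, m_y) = 1$: since $m_y$ is even and $r \mid m_y$, $s$ is odd and $s \neq r$. If a Sylow $s$-subgroup of $G/R$ were noncyclic, Lemma \ref{l:nonc} applied to $G$ with the normal $r$-subgroup $R$ would yield $sr \in \omega(G)$, hence $sr \mid m_y$ and $s \mid m_y$, contradicting $(s, m_y) = 1$. So the Sylow $s$-subgroup of $G/R$ is cyclic, and since $(s, |R|) = 1$, a Sylow $s$-subgroup of $G$ maps isomorphically to one of $G/R$ and is cyclic as well.

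The principal obstacle is in (i): the Frobenius construction delivers $pw$ or $r_z w$ in $\omega(G)$ for some $w \in \pi(F)$, but a contradiction arises only when $w$ is chosen so the product escapes $\omega(L)$. The explicit forms of $m_y$ and $m_z$ together with the disjointness of their prime sets, and the fact $\pi(K) \cap \{p, r_z\} = \varnothing$, guide this choice; in particular, when $r \in \pi(F)$ one can take $w = r$ and trigger a direct coclique violation via $pr$ or $r_z r$, whereas the case $r \notin \pi(F)$ requires a more delicate selection of $w$ inside $\pi(F) \setminus \pi(m_z)$. The bound $r \geqslant 5$ conveniently eliminates the exceptional Fermat-prime configuration of Lemma \ref{l:hh}.
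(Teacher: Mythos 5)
Part (ii) of your argument is sound and essentially matches the paper's (the paper lets a Sylow $s$-subgroup act on a normal Sylow $w$-subgroup of $F$, $w\in R_y(\tau q)$, and invokes the structure of Frobenius complements directly rather than Lemma \ref{l:nonc}, but both work once (i) is available). The genuine gap is in (i), specifically in how you dispose of the case $C_{\widetilde G}(\widetilde T)\nleqslant \widetilde K$. You claim this case is impossible because it would yield $tp$ or $tr_z(\tau q)$ in $\omega(G)$, ``incompatible with $t\in\pi(K)$ and $p,r_z\notin\pi(K)$''. But there is no such incompatibility: whether $t$ lies in $\pi(K)$ has no bearing on whether $tp$ belongs to $\omega(L)$. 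The prime $t$ need not lie in $R_y(\tau q)\cup R_z(\tau q)$; it can, for example, divide $q-\tau$, in which case it is adjacent in $GK(L)$ to $p$ and to $r_z(\tau q)$ simultaneously (both $p(q-\tau)$ and $m_z$ can be multiples of $t$). In the paper's proof this is precisely the branch that survives: adjacency of $t$ to all of $\pi(S)$ forces $t\mid m_z$, and since $(m_y,m_z)=1$ this forces $R_y(\tau q)\cap\pi(S)=\varnothing$; a further Frobenius argument on $\widetilde T\rtimes\widetilde W$ rules out $R_y(\tau q)\cap(\pi(K)\setminus\pi(F))\neq\varnothing$, giving $R_y(\tau q)\subseteq\pi(F)\setminus\pi(H/F)$. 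So the desired conclusion is read off from this case, not contradicted by it.

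Conversely, the branch you choose to work in, $C_{\widetilde G}(\widetilde T)\leqslant\widetilde K$, is the one the paper refutes, and your argument there is incomplete: you concede that when $r\notin\pi(F)$ you need ``a more delicate selection of $w$ inside $\pi(F)\setminus\pi(m_z)$'', but such a $w$ need not exist --- the standard argument via Lemma \ref{l:frob} only shows that the distinguished prime $r\in\pi(F)$ with $\widetilde C_r\cap\widetilde T=1$ divides $m_z$, and gives no control over the other primes of $\pi(F)$. The paper's contradiction here runs along different lines: for $w\in R_y(\tau q)\subseteq\pi(K)$ (Lemma \ref{l:field}), the Sylow $w$-subgroup $W$ of $K$ acts fixed-point-freely on the normal $r$-subgroup $R$ because $(m_y,m_z)=1$, hence $W$ is cyclic, hence $N_G(W)/C_G(W)$ is abelian; the Frattini argument then places a section isomorphic to $S$ inside $C_G(W)$, giving $wr_z(\tau q)\in\omega(G)$, which is impossible since $(m_y,m_z)=1$. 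This cyclicity-plus-Frattini step is the idea missing from your proposal.
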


\begin{proof} Suppose that $K$ is not nilpotent. We will use the notation from the proof of Lemma~\ref{l:nilp}, which means the following. The Fitting subgroup $F$ of $K$ is a proper subgroup of
$K$, and $\widetilde G=G/F$, $\widetilde K=K/F$. We choose $\widetilde T$ to be a minimal normal subgroup of $\widetilde G$ contained in $\widetilde K$ and let $T$ be its preimage in $G$. It is
clear that $\widetilde T$ is an elementary abelian $t$-group for some prime $t$. Given $r\in \pi(F)\setminus \{t\}$, denote the Sylow $r$-subgroup of $F$ by $R$, its centralizer in $G$ by $C_r$ and
the image of $C_r$ in $\widetilde G$ by $\widetilde C_r$. Since $F$ is a proper subgroup of $F$, there is $r\in\pi(F)\setminus\{t\}$ such that $\widetilde C_r \cap \widetilde T = 1$, and we fix some
$r$ enjoying this condition.

Suppose first that $C_{\widetilde G}(\widetilde T)\leqslant \widetilde K$. Then $r$ divides $m_z$, otherwise due to the standard arguments from the proof of Lemma~\ref{l:nilp} we have
$rr_{z}(\tau q)\in\omega(G)$, which is not the case. By Lemma~\ref{l:field}, we have that $R_{y}(\tau q)\subseteq\pi(K)$. If $w\in R_{y}(\tau q)$ and $W$ is a Sylow $w$-subgroup of $K$, then
$R\rtimes
W$ is a Frobenius group because $(m_y,m_z)=1$. It follows that $W$ is cyclic. Thus, $N_G(W)/C_G(W)$ must be abelian. The Frattini argument implies that $N_G(W)$ contains a nonabelian composition
section $S$, and so does $C_G(W)$. Then $wr_{z}(\tau q)\in\omega(G)$, a contradiction.

Thus, we may assume that $C_{\widetilde G}(\widetilde T)\nleqslant \widetilde K$. Then $C_{\widetilde G}(\widetilde T)$ contains a nonabelian composition factor isomorphic to~$S$, and in particular
$t$ is adjacent to every prime in $\pi(S)$. Since $r_{z}(q)$ divides $|S|$, it follows that $t$ divides $m_z$ and $R_{y}(\tau q)\cap \pi(S)=\varnothing$.

If $R_{y}(\tau q)\cap(\pi(K)\setminus\pi(F))\neq\varnothing$ and $\widetilde W$ is a Sylow $w$-subgroup of $\widetilde K$ for some prime $w$ from this intersection, then the group $\widetilde
T\rtimes \widetilde W$ is Frobenius, and we get a contradiction as above.

Thus,  $R_{y}(\tau q)\subseteq\pi(F)$. Let $w\in R_{y}(\tau q)$ and let $W$ be a Sylow $w$-subgroup of $F$,  while $P$ be a $s$-subgroup of $G$ for some $s\in\pi(G)$ such that $(s,
m_y)=1$. Since $w$ and $s$ are nonadjacent in $GK(G)$, the group $W\rtimes P$ is Frobenius. Therefore, $P$ is cyclic.
\end{proof}

\begin{lemma} \label{l:nil} We may assume that $L=L_6^\tau(q)$ and $z=5$, or $L=L_4^\tau(q)$.
\end{lemma}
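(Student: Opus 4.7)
The plan is to rule out, under the standing assumption that $K$ is non-nilpotent, every configuration permitted by Lemma~\ref{l:a} except $L=L_4^\tau(q)$ and $L=L_6^\tau(q)$ with $z=5$. I would exploit three structural constraints already at hand. First, by combining Lemma~\ref{l:field} with Lemma~\ref{l:nil1}(i), every prime $w\in R_y(\tau q)$ divides $|F|$ but not $|H/F|$, so a Sylow $w$-subgroup $W$ of $F$ is a natural module for $S\leqslant H/K\leqslant H/F$ modulo its centralizer. Second, $S$ contains an element of order $k_z(\tau q)$, and in fact $k_z(\tau q)$ divides the invariant $k(S)$ whose restricted possible shapes are listed just before Lemma~\ref{l:field}. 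Third, by Lemma~\ref{l:nil1}(ii), every prime $s\in\pi(G)$ with $(s,m_y)=1$ has cyclic Sylow subgroups in $G$; in particular Sylow $r_z(\tau q)$- and Sylow $p$-subgroups of $S$ are cyclic.

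The uniform strategy is then to pin down $S$ from a short list of Lie-type candidates compatible with these cyclic-Sylow and $k(S)$ constraints, and for each candidate $S$ to exhibit a prime $w\in R_y(\tau q)$ together with an element of $S$ of order coprime to $|F|$ whose product with $w$ is nonadjacent in $GK(L)$, producing a forbidden element order in $G$ via Lemma~\ref{l:frob} or Lemma~\ref{l:hh1}.

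Carrying this out case by case: in case~(i) of Lemma~\ref{l:a}, where $L=O_{2n}^\tau(q)$ with odd $n\geqslant 3$ and $z=2n-2$, matching $k_{2n-2}(\tau q)$ against the shapes available for $k(S)$ via Lemmas~\ref{l:k_n} and~\ref{l:pras} severely restricts $S$, and the survivors are dispatched by the element-order argument above. In cases~(ii) and~(iii), where $L=L_n^\tau(q)$ with $\{y,z\}=\{n-1,n\}$, the range $n\geqslant 7$ is handled by observing that Lemma~\ref{l:bz} supplies enough independent primitive prime divisors of $(\tau q)^j-1$ that Lemma~\ref{l:v} together with the adjacency criterion of~\cite{05VasVd.t,11VasVd.t} forces on $S$ a cyclic-Sylow structure incompatible with the simultaneous requirements $k_z(\tau q)\mid k(S)$ and $R_y(\tau q)\cap\pi(S)=\varnothing$. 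The borderline subcases $n=5$, and $n=6$ in the subcase~(ii) (where $z=6$), require a finer analysis using the precise shapes of $m_y$ and $m_z$ but yield the same contradiction, leaving only $n=4$ and $n=6$ with $z=5$.

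The main obstacle I expect is the enumeration of candidates for $S$ in each configuration: the acting simple group can be any Lie-type group in characteristic $v\neq p$ with a prescribed cyclic Sylow structure. Pruning this list requires the exponent bounds of Lemma~\ref{l:exp_e} for exceptional groups together with the spectrum and adjacency data for classical groups, and for each surviving candidate one must produce an explicit element of $G$ of forbidden order, which in tight cases will call for the modular-representation refinements of Lemmas~\ref{l:hh}--\ref{l:l_2v} rather than just the naive Frobenius argument.
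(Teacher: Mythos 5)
Your proposal assembles the right ingredients---Lemma~\ref{l:nil1}(i) placing $R_y(\tau q)$ inside $\pi(F)$, the divisibility $k_z(\tau q)\mid k(S)$, and above all the cyclicity of Sylow $p$- and Sylow $r_z(\tau q)$-subgroups of $S$ coming from Lemma~\ref{l:nil1}(ii)---but it then routes everything through an enumeration of the candidates for $S$ that is never actually carried out. Phrases such as ``the survivors are dispatched by the element-order argument above'' and ``the borderline subcases \dots\ require a finer analysis \dots\ but yield the same contradiction'' defer precisely the work that would constitute the proof: you never list the surviving groups $S$, never exhibit the forbidden element orders, and never verify that the matching of $k_z(\tau q)$ against the possible shapes of $k(S)$ actually terminates in a short list. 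Determining which simple groups of Lie type in cross characteristic satisfy $k_z(\tau q)\in\omega(S)\subseteq\omega(L)$ is exactly the delicate analysis that occupies all of Section~5 of the paper for the two residual families alone; doing it for every $L$ in Lemma~\ref{l:a} is not a minor pruning step, and nothing in your sketch bounds it. (Also, your claim that a Sylow $w$-subgroup of $F$ is a ``natural module'' for $S$ is unjustified and unnecessary.)

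The idea you miss is that no identification of $S$ is needed at all. For every $L$ in Lemma~\ref{l:a} other than $L_4^\tau(q)$ and $L_6^\tau(q)$ with $z=5$, the graph $GK(L)$ contains a coclique $\rho$ of size $4$ that contains a prime of $R_y(\tau q)$ and does not contain $p$ (a direct check against \cite{11VasVd.t}; for $L_6^\tau(q)$ with $z=6$ one uses the two cocliques $\{p,r_6(\tau q),r_5(\tau q)\}$ and $\{r_3(\tau q),r_4(\tau q),r_5(\tau q)\}$ instead). Since the $R_y(\tau q)$-prime of $\rho$ lies in $\pi(K)$ by Lemma~\ref{l:nil1}(i), Lemma~\ref{l:str}(ii) forces the remaining three primes $\rho'$ of $\rho$, together with $p$, into $\pi(S)\setminus(\pi(K)\cup\pi(G/H))$. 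Now $p$ is nonadjacent to at most one element of $\rho'$ (one of the form $r_z(\tau q)$), hence adjacent to two pairwise nonadjacent primes of $\pi(S)$, and Lemma~\ref{l:v} then forces a Sylow $p$-subgroup of $S$ to be noncyclic---contradicting Lemma~\ref{l:nil1}(ii), since $(p,m_y)=1$. This short argument, carried out entirely inside $GK(L)$, replaces your whole case analysis; without it (or an actual execution of your enumeration) your proposal remains a plan rather than a proof.
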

\begin{proof}
Suppose that $L\neq L_6(\tau q)$, $L_4(\tau q)$. Then one can easily check using \cite{11VasVd.t} that $t(L)\geqslant4$ and there is a coclique of size 4 in $GK(L)$ that
contains an element of the form $r_y(\tau q)$ and does not contain $p$. Let $\rho$ be such a coclique and $\rho'=\rho\setminus R_y(\tau q)$. Lemma \ref{l:nil1}(i) and Lemma~\ref{l:str} imply that
$\{p\}\cup \rho'\subseteq\pi(S)\setminus(\pi(K)\cup\pi(G/H))$. On the other hand, $p$ is not adjacent at most to one element of $\rho'$, an element of the form $r_{z}(\tau q)$. Hence $p$ is
adjacent to at least two elements of $\rho'$. By Lemma~\ref{l:v}, a Sylow $p$-subgroup of $S$ is not cyclic.
This contradicts Lemma  \ref{l:nil1}(ii).

Suppose that $L=L_6^\tau(q)$ and $z=6$. Since $\{p, r_6(\tau q), r_5(\tau q)\}$ and $\{r_3(\tau q), r_4(\tau q), r_5(\tau q)\}$ are cocliques in $GK(L)$, it follows that $\{p\}\cup R_3(\tau q)\cup
R_4(\tau q)\subseteq \pi(S)\setminus(\pi(K)\cup\pi(G/H))$. However, $p$ is adjacent to $r_3(\tau q)$ and $r_4(\tau q)$ in $GK(L)$, and we derive a contradiction as above.
\end{proof}

\section{Small dimensions}

In this section we handle the remaining case $L=L_6^\tau(q)$ and $z=5$, or $L=L_4^\tau(q)$. If $L$ is one of the groups $L_4(3)$, $U_4(3)$, $L_4(5)$, and $U_6(5)$, then $L$ has disconnected
prime graph, so $K$ is nilpotent. If $L=U_4(5)$ or $L_6(3)$, then $K=1$ (see \cite{05Vas1.t} and \cite{06DFS} respectively). If $L=L_6(5)$ or $L=U_6(3)$, then $z=6$. Thus we may assume that $q>5$,
and in particular $\gexp_p(L)\leqslant q$.

We begin with some more notation and two auxiliary lemmas. Choose $x\in\{1,2\}$ such that $2\not\in R_x(\tau q)$.
By definition and Lemma \ref{l:bz}, $R_x(\tau q)$ is not empty and consists of odd primes. Furthermore, it is not hard to see that
$x=2$ if $z=n$ and $x=1$ if $z=n-1$. If $x=2$, then it is clear that $r_x(\tau q)$ is adjacent to $r_z(\tau q)$ but not to $r_y(\tau q)$
in $GK(L)$. If $x=1$, then $r\in R_x(\tau q)$ is adjacent to $r_z(\tau q)$ if and only if $(q-\tau)_r>(n)_r$ and not adjacent to  $r_y(\tau q)$ if and only if $(q-\tau)_r\geqslant (n)_r$
(see, for example, \cite[Propositions 4.1 and 4.2]{05VasVd.t}).
Since $n=6$ or $4$, we conclude that $r_x(\tau q)$ is always adjacent to $r_z(\tau q)$ but not to $r_y(\tau q)$ unless $n=6$, $r_x(\tau q)=3$ and $(q-\tau)_3=3$ in which case $r_x(\tau q)$ is
not adjacent to both $r_z(\tau q)$ and $r_y(\tau q)$.

\begin{lemma}\label{l:L6_except}
Let $L=L_6^\tau(q)$, $z=5$, $\sigma=\pi(L)\setminus R_{6}(\tau q)$ and  suppose that $S\neq{}^3D_4(u)$ is an exceptional group of Lie type such that $k_{5}(\tau
q)\in\omega(S)\subseteq\omega (L)$. Then the following holds:
\begin{enumerate}
 \item If $S\neq G_2(u)$, ${}^2B_2(u)$, then $\gexp_{\sigma}(L)<\gexp(S)$.
 \item If $S= G_2(u)$, ${}^2B_2(u)$, $\eta=\sigma\setminus R_1(\tau q)$ and $(q-\tau,5)=1$, then $\gexp_{\eta}(L)<\gexp(S)$.
 \end{enumerate}
\end{lemma}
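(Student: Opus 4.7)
The plan is to combine an upper bound on $\gexp_\sigma(L)$ (respectively $\gexp_\eta(L)$) coming from Lemma~\ref{l:spec_l6} with a lower bound on $\gexp(S)$ from Lemma~\ref{l:exp_e} (respectively from Lemmas~\ref{l:spec_g2} and~\ref{l:spec_2b2}), using the hypothesis $k_5(\tau q)\in\omega(S)$ to force the base field order $u$ of $S$ to be large enough in terms of $q$ for these bounds to separate.

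First I would bound $\gexp_\sigma(L)$. By Lemma~\ref{l:spec_l6}, every element of $\omega(L)$ divides one of six listed integers; the exceptional $p^2$-factor does not occur since $q>5$ forces $p\geqslant 7$, whence also $\gexp_p(L)\leqslant p\leqslant q$. Of the six, only $m_y=(q^3+\tau)(q^2+\tau q+1)/(6,q-\tau)$ has primes from $R_6(\tau q)$, and these lie entirely in $\Phi_6(\tau q)=q^2-\tau q+1$ through the factorization $q^3+\tau=(q+\tau)\Phi_6(\tau q)$; stripping this factor isolates the $\sigma$-part. Taking LCMs over all maximal elements yields $\gexp_\sigma(L)\leqslant p\,\Phi_3(\tau q)\Phi_5(\tau q)(q^4-1)$. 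In part~(ii), further stripping the $R_1(\tau q)$-part --- whose primes divide $q-\tau$ and, under $(q-\tau,5)=1$, do not interact with $\Phi_5(\tau q)$ beyond a constant factor --- yields $\gexp_\eta(L)\leqslant p(q+\tau)(q^2+1)\Phi_3(\tau q)\Phi_5(\tau q)$.

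Next I would convert $k_5(\tau q)\in\omega(S)$ into a lower bound on $u$. By the paragraph preceding the lemma, $k_5(\tau q)=k_z(\tau q)$ divides $k(S)$, which equals $k_j(u)\leqslant u^{\varphi(j)}$ for an admissible $j$ when $S\notin\{{}^2B_2(u),{}^2G_2(u),{}^2F_4(u)\}$, with $\varphi(j)\leqslant 8$ (for $E_8$, $j=30$) and smaller for the other types; for the Suzuki/Ree groups $k(S)=f_\pm(u)$ has magnitude $u$ or $u^2$. Combined with $k_5(\tau q)\geqslant \Phi_5(\tau q)/5$ (or $k_5(\tau q)=\Phi_5(\tau q)$ in part~(ii) under $(q-\tau,5)=1$), this gives $u\geqslant c\cdot q^{4/\varphi(j)}$, respectively its Suzuki/Ree analog.

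For part~(i), Lemma~\ref{l:exp_e} yields $\gexp(S)\geqslant c_S u^{e_S}$ with $e_S\in\{80,48,26,16,10,4\}$; substituting the bound on $u$ gives $\gexp(S)\geqslant C\cdot q^{e_S\cdot 4/\varphi(j)}$, whose exponent comfortably exceeds the degree of the upper bound on $\gexp_\sigma(L)$ in every case. For part~(ii), Lemmas~\ref{l:spec_g2} and~\ref{l:spec_2b2} give $\gexp(G_2(u))\geqslant(u^6-1)/3$ and $\gexp({}^2B_2(u))=4(u^2+1)(u-1)$, and the strengthened bound on $u$ combined with the reduced $\gexp_\eta(L)$ delivers the desired separation. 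The main obstacle will be the numerically tightest cases (likely ${}^2F_4(u)$ in part~(i) and $G_2(u),{}^2B_2(u)$ in part~(ii)), where the multiplicative constants matter for the smallest allowed $q\in\{7,11,13\}$; here one uses the sharper observation that every $r\in R_5(\tau q)$ satisfies $r\equiv 1\pmod 5$ and so $r\geqslant 11$, and the requirement $r\in\pi(S)$ forces $r\mid u^d-1$ for some admissible $d$ from the cyclotomic factorization of $|S|$, pushing $u$ considerably higher than the polynomial bound above would suggest and closing the remaining gap.
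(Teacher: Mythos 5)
Your proposal follows essentially the same route as the paper's proof: bound $\gexp_{\sigma}(L)$ (resp.\ $\gexp_{\eta}(L)$) by an explicit quantity of degree $11$ (resp.\ $10$) in $q$ using Lemma~\ref{l:spec_l6}, exploit $k_5(\tau q)\mid k(S)$ to force the field size $u$ of $S$ upward, compare with the lower bounds of Lemmas~\ref{l:exp_e}, \ref{l:spec_g2} and \ref{l:spec_2b2}, and dispose of the borderline small-$u$ cases by the absolute bound $k_5(\tau q)>cq^4$ with $q\geqslant 7$ --- this is exactly the paper's argument, merely parametrized by $q$ and $u$ rather than by $k(S)$. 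Two constant-level slips to fix when you write it out (neither is fatal, since the margins are wide): your upper bound on $\gexp_{\sigma}(L)$ omits a factor $(3,q+\tau)$ coming from the fact that $3$ may divide $\Phi_6(\tau q)$ without belonging to $R_6(\tau q)$, and the inequality $k_j(u)\leqslant u^{\varphi(j)}$ is false in general (e.g.\ for $j=30$), so one should use $k_j(u)<2u^{\varphi(j)}$ or the explicit cyclotomic values as the paper does.
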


\begin{proof}
Write $k=k_{5}(\tau q)$ and recall that $k(S)$ is the number in $\omega(S)$ such that $k$ divides $k(S)$ and every $r\in\pi(k(S))$ is not adjacent to $2$ in $GK(S)$.
By Lemma \ref{l:k_n}, we have $$k=\frac{q^5-\tau}{(q-\tau)(5,q-\tau)}.$$

(i) It is easy to see using Lemmas \ref{l:spec_l6} and \ref{l:r-part}  that $$\gexp_{\sigma}(L)=\gexp_p(L)\cdot (3,q+\tau)(q^5-\tau)(q^2+\tau q+1)(q^2+1)(q+\tau).$$

If $\tau=+$, then $k>q^4/5$ and $$\gexp_{\sigma}(L)< q\cdot 3\cdot q^5\cdot \frac{q^3-1}{q-1}\cdot \frac{q^4-1}{q-1}<3q^{11} \left(\frac{q}{q-1}\right)^2\leqslant 3q^{11}\cdot
\left(\frac{7}{6}\right)^2=\frac{49q^{11}}{12}.$$
So $q^4<5k(S)$ and $$\gexp_{\sigma}(L)<\frac{49\cdot \left(5k(S)\right)^{11/4}}{12}<342\left(k(S)\right)^{11/4}.$$

Similarly, if $\tau=-$, then $k>7q^4/40$ and $$\gexp_{\sigma}(L)<q\cdot 3\cdot (q^5+1)(q^2-q+1)(q^2+1)(q-1)<3q^{11}.$$ It follows that $\gexp_{\sigma}(L)<3\cdot
\left(40k(S)/7\right)^{11/4}<363\left(k(S)\right)^{11/4}.$

In either case, we have $\gexp_{\sigma}(L)<\gexp(S)$ unless \begin{equation}\label{e:sexp}\gexp(S)<363\left(k(S)\right)^{11/4}.\end{equation}
Observe that $2^{11/4}<7$.

Let $S=E_8(u)$. Then $\gexp(S)>2u^{80}$ by Lemma \ref{l:exp_e} and $k(S)<2u^8$. It follows from \eqref{e:sexp} that $$2u^{80}<363(2u^8)^{11/4},$$ and so $u^{58}<182\cdot 7$. Similarly,
if $S=E_7(u)$, then $\gexp(S)>3u^{48}$ by Lemma \ref{l:exp_e} and $k(S)<2u^6$, and we derive that $$3u^{48}<363(2u^6)^{11/4},$$ or equivalently, $u^{63/2}<121\cdot 7$. In both cases, we have a
contradiction.

Let $S=E^\varepsilon_6(u)$. Then $\gexp(S)>u^{26}$ and $k(S)\leqslant 2u^6$, and so $$u^{26}<363(2u^6)^{11/4},$$ or equivalently, $u^{19}<363^2\cdot 2^{11/2}$. The last inequality  yields
$u=2$. If $S=F_4(u)$, then $\gexp(S)>3u^{16}$ and $k(S)\leqslant u^4+1$. Thus $$3u^{16}<363(u^4+1)^{11/4},$$ and so again $u=2$. In either case, we have $7q^4/40<k<k(S)<128$, contrary to the
fact that $q>5$.

If $S={}^2F_4(u)$, then $\gexp(S)>5u^{10}$ and $k(S)\leqslant 2u^2$. It follows that $5u^{10}<363(2u^2)^{11/4}$, whence $u^{9/2}<73\cdot 7$. This is a contradiction since $u\geqslant 8$.

If $S={}^2G_2(u)$, then $\gexp(S)>2u^{4}$ and $$k(S)\leqslant u+\sqrt{3u}+1< 2u.$$  We have $2u^{4}<363(2u)^{11/4}$, whence $u^5<(363/2)^4\cdot 2^{11}$, and so $u=3^3$
or $u=3^5$. In fact, using $u+\sqrt{3u}+1$ instead of $2u$, one can check that \eqref{e:sexp} does not hold for $u=3^5$. It follows that $k<2\cdot 3^3$, but we saw above that this is false.

(ii) Since  $z=5$, if follows that $(q-\tau)_2=2$, and so the $R_1(\tau q)$-part of $\gexp_\sigma(L)$ is equal to $(q-\tau)(5,q-\tau)(3,q-\tau)/2$. Hence
$$\gexp_{\eta}(L)\leqslant \frac{2\gexp_{\sigma}(L)}{q-\tau}=\frac{2\gexp_p(L)\cdot (3,q+\tau)(q^5-\tau)(q^2+\tau q+1)(q^2+1)(q+\tau)}{q-\tau}.$$
If $\tau=+$, then $k>q^4$ and $$\gexp_{\eta}(L)< 6q\cdot\frac{q^5-1}{q-1}\cdot \frac{q^3-1}{q-1}\cdot \frac{q^4-1}{q-1}\leqslant 6q^{10}\cdot
\left(\frac{7}{6}\right)^3<10q^{10}<10k^{5/2}.$$

If $\tau=-$, then $k>7q^4/8$ and $$\gexp_{\eta}(L)< 6q\frac{q^5+1}{q+1}\cdot (q^2-q+1)(q^2+1)(q-1)\leqslant 6q^{10}<6\left(\frac{8k}{7}\right)^{5/2}<9k^{5/2}.$$

In either case, $\gexp_{\eta}(L)<\gexp(S)$ unless \begin{equation}\label{e:eexp}\gexp(S)<10\left(k(S)\right)^{5/2}.\end{equation}

If $S=G_2(u)$, then $\gexp(S)\geqslant 7(u^{6}-1)/(3,u^2-1)$ and $k(S)\leqslant u^2+u+1$. So \eqref{e:eexp} yields $7(u^6-1)<30(u^2+u+1)^{5/2}$, or equivalently,
$$7(u^2-1)(u^2-u+1)<30(u^2+u+1)^{3/2}.$$ The last inequality is not valid if $u\geqslant 16$, and hence $u\leqslant 13$, which implies that $k\leqslant 183$. This is a contradiction
since $k\geqslant 7q^4/8$.

If $S={}^2B_2(u)$, then $\gexp(S)=4(u^2+1)(u-1)$ and $k(S)\leqslant u+\sqrt{2u}+1$. So we have $4(u^2+1)(u-1)<30(u+\sqrt{2u}+1)^{5/2}$, or equivalently,
$$2(u-\sqrt{2u}+1)(u-1)<15(u+\sqrt{2u}+1)^{3/2}.$$ It follows that $u=2^3$ or $u=2^5$, and therefore, $k\leqslant 41$, a contradiction.
\end{proof}

\begin{lemma}\label{l:L4_except}
Let $L=L_4^\tau(q)$, $\sigma=\pi(L)\setminus R_{y}(\tau q)$ and suppose that $S\neq{}^3D_4(u)$ is an exceptional group of Lie type such that $k_{z}(\tau
q)\in\omega(S)\subseteq\omega (L)$. Then the following holds:
\begin{enumerate}
 \item if $S\neq G_2(u)$, ${}^2B_2(u)$, then $\gexp_{\sigma}(L)<\gexp(S)$;
 \item if $S= G_2(u)$, ${}^2B_2(u)$, $\eta=\sigma\setminus R_x(\tau q)$, then $\gexp_{\eta}(L)<\gexp(S)$.
 \end{enumerate}
\end{lemma}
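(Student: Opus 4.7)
The proof follows the same strategy as Lemma \ref{l:L6_except}: express $\gexp_\sigma(L)$ (respectively $\gexp_\eta(L)$) in closed form, bound it above by an explicit power of $k=k_z(\tau q)$, and then invoke the exponent estimates of Lemma \ref{l:exp_e} for $\gexp(S)$ to derive the required inequality.

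I would begin by combining Lemma \ref{l:spec_l4} with Lemma \ref{l:r-part} to describe the primary decomposition of $\gexp(L)$. The hypothesis $q>5$ forces $\gexp_p(L)\leqslant q$ (with $\gexp_p(L)=9$ only when $p=3$), and $\gexp_{p'}(L)=(q^4-1)(q^2+\tau q+1)/2$. Removing the $R_y(\tau q)$-part then amounts to replacing the cyclotomic factor $q^2+\tau q+1$ by its $3$-part, which is at most $3$, in case (ii) (where $y=3$, $z=4$, $x=2$), and replacing $q^2+1$ by its $2$-part, which equals $2$, in case (iii) (where $y=4$, $z=3$, $x=1$). This yields $\gexp_\sigma(L)\leqslant Cq^5$ with a small absolute constant $C$ in both cases. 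Since $k=(q^2+1)/2$ in case (ii) and $k=(q^2+\tau q+1)/(3,q-\tau)$ in case (iii), the inequality $q^2<6k$ holds in either situation, and hence $\gexp_\sigma(L)<C'k^{5/2}$ for an explicit absolute constant $C'$.

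For part (i) I would then use Lemma \ref{l:exp_e} together with the descriptions of $k(S)$ collected before Lemma \ref{l:field}, proceeding exactly as in the proof of Lemma \ref{l:L6_except}(i): for each $S$ among $E_8(u), E_7(u), E_6^\pm(u), F_4(u), {}^2F_4(u), {}^2G_2(u)$, substitute $k(S)$ and $\gexp(S)$ into $C'(k(S))^{5/2}<\gexp(S)$. The exponential gap between $\gexp(S)$, which grows as $u^r$ with $r\geqslant 4$, and $(k(S))^{5/2}$, which grows as $u^{5s/2}$ with $s$ well below $2r/5$, yields the inequality for all but a few small values of $u$, and these residual $u$ are excluded using $k\geqslant q^2/6$ together with $q>5$, which forces $k$ to exceed the permissible $k(S)$.

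For part (ii), that is, $S=G_2(u)$ or ${}^2B_2(u)$, the above estimate does not suffice since $\gexp(S)$ grows only as $u^6$ or $u^3$ respectively. I would tighten the left-hand side by additionally discarding the $R_x(\tau q)$-part: with $x=2$ in case (ii) and $x=1$ in case (iii), this removal affects a single cyclotomic factor of $q^4-1$, giving $\gexp_\eta(L)<Dq^4$ and hence $\gexp_\eta(L)<D'k^2$ for an explicit absolute constant $D'$. Combining $\gexp(G_2(u))\geqslant 7(u^6-1)/3$ with $k(G_2(u))\leqslant u^2+u+1$, and $\gexp({}^2B_2(u))=4(u^2+1)(u-1)$ with $k({}^2B_2(u))\leqslant u+\sqrt{2u}+1$, the inequality $D'(k(S))^2<\gexp(S)$ reduces to bounds that force $u$ below a small explicit threshold, and the residual small cases are ruled out because $k\geqslant (q^2-q)/3$ and $q>5$ already exceed the largest permissible $k(S)$. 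The main obstacle is the bookkeeping around small primes, especially the factors of $(4,q-\tau)$ and the parity and $3$-adic anomalies appearing when the $R_y(\tau q)$- and $R_x(\tau q)$-parts are removed from $\gexp_{p'}(L)$; these contribute only an absolute constant and do not affect the asymptotic comparisons, but they must be tracked carefully in order to close the argument for the low-rank cases $G_2(u)$ and ${}^2B_2(u)$.
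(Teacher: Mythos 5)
Your proposal is correct and follows essentially the same route as the paper: the same closed forms for $\gexp_\sigma(L)$ and $\gexp_\eta(L)$ obtained by stripping the $R_y(\tau q)$- and $R_x(\tau q)$-parts from $\gexp_{p'}(L)=(q^4-1)(q^2+\tau q+1)/2$, the same bounds of the shape $C'k^{5/2}$ and $D'k^2$ with $k=k_z(\tau q)$, and the same comparison against Lemma~\ref{l:exp_e} and the table of $k(S)$ (the paper merely shortcuts part (i) by noting its inequality is stronger than the one already analysed in Lemma~\ref{l:L6_except}, leaving only $E_6^\varepsilon(2)$, $F_4(2)$, ${}^2G_2(27)$ to check). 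The one loose end is your endgame for $G_2(u)$ in part (ii): the bound $k\geqslant (q^2-q)/3\geqslant 14$ does not by itself eliminate $G_2(4)$, where $k(S)=21$; as in the paper you need the sharper $k\geqslant 19$ (the minimum over $q\geqslant 7$) together with $k\mid k(S)$ and the fact that $k_z(\tau q)$ is coprime to $3$.
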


\begin{proof}  Write $k=k_{z}(\tau q)$.

(i) If $z=4$, then $k=(q^2+1)/2>q^2/2$ and $$\gexp_{\sigma}(L)=\gexp_p(L)\cdot (3,q-\tau)(q^4-1)/2<3q^5/2<3(2k)^{5/2}/2<9k^{5/2}.$$

Let $z=3$. If $\tau =+$, then $k=(q^2+q+1)/(3,q-1)>q^2/3$ and $$\gexp_{\sigma}(L)=\gexp_p(L)\cdot (q^2+q+1)(q^2-1)<\frac{7q^5}{6}<\frac{7(3k)^{5/2}}{6}<19k^{5/2}.$$
If $\tau=-$, then $k=(q^2-q+1)/(3,q+1)>7q^2/24$ and $$\gexp_{\sigma}(L)=\gexp_p(L)\cdot (q^2-q+1)(q^2-1)<q^5<\left(\frac{24k}{7}\right)^{5/2}<22k^{5/2}.$$

Thus  $\gexp_{\sigma}(L)<\gexp(S)$ unless \begin{equation}\label{e:sexp4}\gexp(S)<22\left(k(S)\right)^{5/2}.\end{equation}
Since \eqref{e:sexp4} is stronger than \eqref{e:sexp}, we may assume  that $S$ is one of the groups $E_6^\varepsilon(2)$, $F_4(2)$, $^2G_2(3^3)$.
It is easy to check that  \eqref{e:sexp4} does not hold for all these groups.

(ii) If $z=4$, then $x=2$ and the $R_{x}(\tau q)$-part of $\gexp(L)$ is equal to $(q+\tau)/2$. So $$\gexp_{\eta}(L)=\gexp_p(L)\cdot
(3,q-\tau)(q^2+1)(q-\tau)<\frac{7q^4}{2}<\frac{7(2k)^2}{2}=14k^2.$$

Similarly, if $z=3$ and $\tau =+$, then $$\gexp_{\eta}(L)=2\gexp_p(L)\cdot
(q^2+q+1)(q+1)<2q^4\left(\frac{7}{6}\right)^2<2\cdot (3k)^2\cdot \left(\frac{7}{6}\right)^2<25k^2.$$
And if $z=3$ and $\tau =-$, then $$\gexp_{\eta}(L)=2\gexp_p(L)\cdot
(q^2-q+1)(q-1)<2q^4<2\cdot (24k/7)^2<24k^2.$$

Thus $\gexp_{\eta}(L)<\gexp(S)$ unless \begin{equation}\label{e:eexp4}\gexp(S)<25\left(k(S)\right)^{2}.\end{equation}

If $S=G_2(u)$, then arguing  as in the proof of Lemma \ref{l:L6_except}, we derive that \eqref{e:eexp4} yields $$7(u^2-1)(u^2-u+1)<75(u^2+u+1),$$
whence $u\leqslant 4$. But then $k\leqslant u^2+u+1\leqslant 21$. Since $k\geqslant 19$, we see that $u=4$ and $k$ divides $21$. This contradicts to the fact that $k$ is coprime to $3$.

Similarly, if $S={}^2B_2(u)$, then
$$ 4(u-\sqrt{2u}+1)(u-1)<25(u+\sqrt{2u}+1).$$ It follows that $u\leqslant 8$, and so $k\leqslant 13$, which is a contradiction.
\end{proof}

The next step in our proof is the following lemma. In fact, this lemma remains valid without assumption that $L=L_6^\tau(q)$, $L_4^\tau(q)$ but we need it only
in this section.

\begin{lemma}\label{l:schur}
If $r\in\pi(K)$,  $(r,m_y)=1$ and $r$ does not divide the order of the Schur multiplier $M(S)$ of $S$, then $r\not\in\pi(S)$ and $\gexp_r(K)\cdot\omega(S)\subseteq\pi(G)$.
\end{lemma}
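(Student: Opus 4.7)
The plan is to set up a subgroup $D\leqslant G$ that contains a Sylow $r$-subgroup $R$ of $K$ centrally while admitting $S$ as a quotient, and then to split the resulting central extension using the coprimality of $r$ and $|M(S)|$.

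First I would observe that $r$ is odd, since $m_y$ is even and $(r,m_y)=1$, and that $R$ is cyclic by Lemma \ref{l:nil1}(ii); set $r^c:=|R|=\gexp_r(K)$. A Frattini argument applied to $R\trianglelefteq K$ inside $H$ gives $N_H(R)/N_K(R)\cong S$, while the conjugation action on $R$ lands in the abelian group $\Aut(R)$. Combined with the perfectness of $S$, this forces $D:=N_H(R)\cap C_G(R)$ to satisfy $D\cdot N_K(R)=N_H(R)$, whence $D/(D\cap K)\cong S$ with $R\leqslant Z(D)$.

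Next I would simplify the kernel $D_0:=D\cap K$. Since $R$ is a central Sylow $r$-subgroup of $D_0$, we have $D_0=R\times N$ for some characteristic $r'$-subgroup $N$, which is therefore normal in $D$. The quotient $D/N$ then fits into the central extension
$$1\to R\to D/N\to S\to 1.$$
Since $R$ is a cyclic $r$-group and $r\nmid|M(S)|$, the classifying group $\operatorname{Hom}(M(S),R)$ vanishes, so this extension splits and $D/N\cong R\times S$.

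The two conclusions follow directly. A Sylow $r$-subgroup of $D/N$ is a quotient of the cyclic Sylow $r$-subgroup of $D$, so it is itself cyclic; but a Sylow $r$-subgroup of $R\times S$ is $R$ times a Sylow $r$-subgroup of $S$, and since $R\neq 1$ this forces $r\notin\pi(S)$. For any $s\in\omega(S)$, now automatically coprime to $r$, I lift an element of order $s$ from the $S$-factor of $D/N$ to $x\in D$; then $x$ has order $sd$ with $d$ a divisor of $|N|$ coprime to $r$, so $x^d\in D$ has order exactly $s$. A generator $y\in R$ is central in $D$ of order $r^c$, and $x^d y$ therefore has order $sr^c$, giving the inclusion $\gexp_r(K)\cdot\omega(S)\subseteq\omega(G)$.

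The principal step, where both hypotheses are used simultaneously, is the splitting of the central extension: cyclicity of Sylow $r$-subgroups of $G$ gives the clean structure of $D_0$, while $r\nmid|M(S)|$ kills any nontrivial cocycle. Once $D/N\cong R\times S$ is in place, the remainder is elementary order arithmetic.
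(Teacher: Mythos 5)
Your proof is correct and follows essentially the same route as the paper's: both use the cyclicity of the Sylow $r$-subgroup $R$ (Lemma \ref{l:nil1}(ii)) and a Frattini argument to place a copy of $S$ above a subgroup centralizing $R$, strip off an $r'$-normal piece of the kernel to obtain a central extension of $S$ by an $r$-group, and invoke $r\nmid|M(S)|$ to split it into $R\times S$. The only cosmetic difference is that you split the extension via $H^2(S,R)\cong\operatorname{Hom}(M(S),R)=0$, whereas the paper passes to the terminal term of the derived series, i.e.\ to a perfect central extension of $S$; these are interchangeable.
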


\begin{proof}
By Lemma \ref{l:nil1}(ii), Sylow $r$-subgroups of $G$ are cyclic. In particular, if
$R$ is a Sylow $r$-subgroup of $K$, then $R$ is cyclic. By Frattini argument, we derive that $C=C_G(R)$ has a composition factor
isomorphic to $S$. A Hall $r'$-subgroup $A$ of the solvable radical of $C$ centralizes the Sylow $r$-subgroup of this radical, so $A$ is normal in $C$.
Factoring out $C$ by $A$, we get a central extension of $S$ by an $r$-subgroup. Denote this central extension by
$\widetilde C$. The derived series of $\widetilde C$ terminates in a perfect central extension of $S$ by an $r$-group. By the hypothesis, this perfect central
extension is isomorphic to $S$, and therefore $\widetilde C$ includes subgroup isomorphic to $R\times S$. Now the lemma follows.
\end{proof}

Now we are ready to complete the proof of Theorem~\ref{t:main}. Applying Lemmas \ref{l:str}, \ref{l:field} and \ref{l:nil1}, we conclude that $p$ and $r_z(\tau q)$ lie in $\pi(S)\setminus(\pi(K)\cup\pi(G/H))$ and the corresponding Sylow subgroups of $S$ must be cyclic.

Suppose that $S$ is a classical group. Let $w\in R_y(\tau q)$. Then $w\in \pi(F)\setminus \pi(H/F)$ and writing $\widetilde G=G/O_{w'}(K)$, $\widetilde K=K/O_{w'}(K)$,  $\widetilde
W=O_{w}(\widetilde K)$, we have that $N=\widetilde K/\widetilde W$ is a $w'$-subgroup. By the Hall-Higman Lemma 1.2.3 \cite{56HalHig}, it follows that $C_{\widetilde K}(\widetilde W)\leqslant \widetilde W$. Let $r\in\pi(N)\cap\pi(m_y)$ and $R$ be a Sylow $r$-subgroup of $N$. By the Frattini argument, there is an element $g\in N_{\widetilde G/\widetilde W}(R)$ of order $r_z(\tau q)$. Since $rr_z(\tau q)\not\in\omega(G)$, $R\rtimes\langle g\rangle $ is a Frobenius group. Applying Lemma \ref{l:frob} yields $wr_z(\tau q)\in\omega(G)$, which is not the case. Thus $\pi(N)\cap\pi(m_y)=\varnothing$. In particular, $2\not\in\pi(N)$. Furthermore, by Lemma \ref{l:schur}, it follows that $\pi(N)\cap\pi(S)\subseteq\pi(M(S))$.

Assume that $r_z(\tau q)\neq v$. At least one of the numbers $r_z(\tau q)$ and $p$, denote this number by $r$, divides the order of a proper parabolic subgroup of $S$ (cf. \cite[Lemma 3.8]{15Vas}). By Lemma \ref{l:hh1}, we see that $rw\in\omega(G)$ unless $v\in\pi(N)$. In this case, by the results of the preceding paragraph, $v$ is odd and $v$ divides $|M(S)|$. It follows that $v=3$ and $S$ is one of the groups $L_2(9)$, $U_4(3)$, and $S_6(3)$. But then $v$-subgroup of $S$ is not cyclic, contrary to the fact that $v$ is coprime to $m_y$ and  Lemma \ref{l:nil1}. If $r_z(\tau q)=v$, then $S=L_2(v)$ since $r_z(\tau q)$ is not adjacent to $2$ and the corresponding Sylow subgroup is cyclic. Furthermore, in this case $\pi(N)\cap\pi(S)=1$. Applying Lemma \ref{l:l_2v}, we see that  $\{p,r_z(\tau q), w\}$ is not a coclique in $GK(G)$, a contradiction.

If $S={}^3D_4(u)$, then $r_z(\tau q)$ divides $u^{4}-u^2+1$, and so $p$ divides $u^6-1$. This implies that Sylow $p$-subgroups of $S$ are not cyclic (see the structure of maximal tori of $^3D_4(q)$ in
\cite{87DerMich}). Thus we may assume that $S$ is an exceptional group of Lie type other than $^3D_4(u)$.

Let $L=L_6^\tau(q)$ and $z=5$.  Since $R_6(\tau q)\cap\pi(S)=\varnothing$, it follows from
Lemma~\ref{l:L6_except}(i)  that $S$ is $^2B_2(u)$ or $G_2(u)$. We claim that $R_1(\tau q)\cap\pi(S)=\varnothing$. Recall that $z=5$ yields $2\not\in R_1(\tau q)$.

Assume that $r\in R_1(\tau q)\cap\pi(S)$. Since $r$ does not divide $m_6(\tau q)$, the corresponding Sylow
subgroup of $G$ is cyclic. This, in particular, implies that $r\neq v,3$.

If $r\in\pi(K)$, then by Lemma \ref{l:schur}, we have $r\in\pi(M(S))$. This is a contradiction, since the Schur multiplier of $^2B_2(u)$ or $G_2(u)$ is either trivial, or a
$2$-group, or a $3$-group.

Suppose that $r\in \pi(G/H)$. Since $r$ is odd, it follows that $u=u_0^r$ for some $u_0$ and $G/K$ contains an automorphism $\varphi$ of $S$ of order $r$ such that $C_S(\varphi)$ is
$^2B_2(u_0)$ or $G_2(u_0)$ respectively. It is not hard to check using Lemmas \ref{l:spec_2b2} and \ref{l:spec_g2}, that there is $t\in\pi(k(S))\setminus\pi(C_S(\varphi))$ (for example, if
$S=G_2(u)$ and $k(S)=u^2+\varepsilon u+1$, then we can take $t=r_{3r}(\varepsilon u_0)$). Observe that $t\in  R_1(\tau q)\cup R_5(\tau q)$ since $k(S)$
divides $m_5(\tau q)$. Let $T$ be a Sylow $t$-subgroup of $S$.
By the Frattini argument, there is an element $g\in N_{G/K}(T)$ of order $r$. The choice of $t$ implies  that $T\rtimes\langle g\rangle$ is a Frobenius group.  By the result of the previous
paragraph, both $r$ and $t$ are coprime to $|K|$, so we may assume that this Frobenius group acts on the Sylow $w$-subgroup of $K$ for some $w\in R_6(\tau q)$. Applying Lemma
\ref{l:frob},
we see that either $tw$ or $rw$ lies in $\pi(G)$, a contradiction.

Thus if $r\in R_1(\tau q)\cap \pi(S)$, then $r\not\in\pi(K)\cup \pi(G/H)$ and $r\neq 3$. It follows that $r$ is adjacent to both $r_z(\tau q)$ and $p$ in $GK(S)$, while $r_z(\tau q)$
and $p$ are not adjacent. This situation is impossible in the graph $GK({}^2B_2(u))$ since its connected components are cliques (see Lemma \ref{l:spec_2b2}). If $S=G_2(u)$, then by Lemma
\ref{l:spec_g2}, the numbers $p$ and  $r_z(\tau
q)$ divides $u^2-u+1$ and $u^2+u+1$ respectively, or vice versa, and so they do not have common neighbours. Thus we proved that $R_1(\tau q)\cap\pi(S)=\varnothing$.

To apply Lemma~\ref{l:L6_except}(ii) and derive a final contradiction for $L_6^\tau(q)$, it remains to show that $(5,q-\tau)=1$. Suppose that $5\in R_1(\tau q)$. Then $5\not\in\pi(S)$, and therefore
$S=G_2(u)$. Furthermore, since $k(S)$ divides $m_5(\tau q)$ and the ratio $m_5(\tau q)/k_5(\tau q)=q-\tau$ is coprime to $|S|$, it follows that $k_5(\tau q)=k(S)=u^2+\varepsilon u+1$ for some
$\varepsilon\in\{+,-\}$. Also by Lemmas \ref{l:r-part} and \ref{l:spec_g2}, we have that $\gexp_5(L)=(m_5(\tau q))_5=5(q-\tau)_5$, and so $k_5(\tau q)\cdot\gexp_5(L)\in\omega(G)$ but
$p\cdot\gexp_5(L)\not\in\omega(L)$. Assume that $\gexp_5(K)<\gexp_5(G)$. Then $5\in\pi(G/H)$, $G/K$ contains a field automorphism $\varphi$ of $S$ of order $5$ and $u^2+\varepsilon
u+1\in\omega(C_S(\varphi))$. As we remarked previously, this is not the case. Thus $\gexp_5(K)=\gexp_5(G)$. Applying Lemma \ref{l:schur} yields $p\cdot\gexp_5(L)\in\omega(G)\setminus\omega(L)$. This
completes the proof for $L_6^\tau(q)$.

The proof for $L_4^\tau(q)$ follows exactly the same lines with $R_x(\tau q)$ in place of $R_1(\tau q)$ and Lemma~\ref{l:L4_except} in place of Lemma~\ref{l:L6_except}.


\end{document}